\newtheorem{theorem}{Theorem}[section]
\newtheorem{lemma}[theorem]{Lemma}
\newtheorem{proposition}[theorem]{Proposition}
\newtheorem{corollary}[theorem]{Corollary}
\newtheorem{remark}[theorem]{Remark}
\newtheorem{definition}[theorem]{Definition}
\DeclareMathOperator{\interior}{int}
\DeclareMathOperator{\tr}{tr}
\DeclareMathOperator{\sym}{Sym}
\DeclareMathOperator{\Cyl}{Cyl}
\DeclareMathOperator{\supp}{supp}
\DeclareMathOperator{\orthog}{O}
\title[Convexity estimates for moving hypersurfaces]{Convexity estimates for hypersurfaces moving by concave curvature functions}
\author{Stephen Lynch}
\address{Eberhard Karls Universit\"{a}t T\"{u}bingen \\ Auf der Morgenstelle 10\\ 72076 T\"{u}bingen\\ Germany}
\email{stephen.lynch@math.uni-tuebingen.de}
\begin{document}

\begin{abstract}
We study fully nonlinear geometric flows that deform strictly $k$-convex hypersurfaces in Euclidean space with pointwise normal speed given by a concave function of the principal curvatures. Specifically, the speeds we consider are obtained by performing a nonlinear interpolation between the mean and the $k$-harmonic mean of the principal curvatures. Our main result is a convexity estimate showing that, on compact solutions, regions of high curvature are approximately convex. In contrast to the mean curvature flow, the fully nonlinear flows considered here preserve $k$-convexity in a Riemannian background, and we show that the convexity estimate carries over to this setting as long as the ambient curvature satisfies a natural pinching condition.
\end{abstract}

\maketitle

\section{Introduction}

We consider evolution processes that deform smooth hypersurfaces in Euclidean space (or more generally a Riemannian manifold) with pointwise velocity determined by their extrinsic curvature. A fundamental example is the mean curvature flow, which arises as the $L^2$-gradient flow of the area functional, and deforms hypersurfaces with pointwise velocity equal to the mean curvature vector. In particular, a one-parameter family of smooth immersions $F:M\times[0,T) \to \mathbb{R}^{n+1}$ of a compact orientable $n$-manifold $M$ solves mean curvature flow if 
\[\partial_t F(x,t) = - H(x,t) \nu(x,t)\]
on $M\times[0,T)$, where $\nu$ is the outward-pointing unit normal and $H$ is the sum of the principal curvatures. In coordinates, the mean curvature flow equation reduces to a weakly parabolic quasilinear system of PDE for the immersion $F$, and there is a unique short-time solution of the flow starting from any smooth initial immersion of $M$. 

Compact solutions of mean curvature flow in a Euclidean background form finite-time singularities, and there is an ongoing program aimed at understanding the structure of singularities for different classes of solutions. Of particular interest are solutions that are (strictly) $k$-convex, meaning that the sum of the smallest $k$ principal curvatures is everywhere (positive) nonnegative. A tensor maximum principle argument applied to the second fundamental form shows that each of these conditions is preserved by the flow. At the endpoints, we have that convexity $(k=1)$ and mean-convexity $(k=n)$ are preserved. 

Huisken showed that convex solutions contract to round points \cite{Huisk84}, and Huisken-Sinestrari constructed a flow with surgeries (in which almost-singular regions are excised and `healed') for two-convex solutions of dimension $n \geq 3$ \cite{Huisk-Sin09} (see \cite{Bren-Huisk16} and \cite{Hasl-Klein_a} for extensions to $n=2$). This led to a topological classification of compact two-convex hypersurfaces in $\mathbb{R}^{n+1}$ for each $n\geq 3$. In \cite{Huisk-Sin09}, the control required to perform surgery is obtained using a package of a priori curvature pinching and gradient estimates, including the convexity estimate established earlier by Huisken-Sinestrari \cite{Huisk-Sin99a}, \cite{Huisk-Sin99}, and by White \cite{White03} using other methods. This estimate says that mean-convex solutions only form convex singularities (that is, the second fundamental form becomes nonnegative at points where the curvature is blowing up), and plays a similar role in mean-convex mean curvature flow as the Hamilton-Ivey estimate in three-dimensional Ricci flow \cite{Ivey}, \cite{Ham93}. 

In the present work we establish a priori convexity estimates for a new family of fully nonlinear hypersurface flows generalising the mean curvature flow. For each dimension $n \geq 2$ and integer $1 \leq k \leq n$ we define a family of speed functions $\gamma_{k,\rho}$, with domain the $k$-positive cone in $\mathbb{R}^n$, as follows:
\[\gamma_{k,\rho} (z) := \Bigg(\sum_{i_1 < \dots < i_k } \frac{\rho}{z_{i_1} + \dots + z_{i_k}} + \frac{1-\rho}{z_1 + \dots + z_n} \Bigg)^{-1}, \qquad \rho \in [0,1].\]
Each of these functions is concave, one-homogeneous and increasing in its arguments. We will be interested in families of strictly $k$-convex immersions that solve
\begin{equation}
\label{eq:CF_interp}
\partial_t F(x,t) = - G_{k,\rho}(x,t) \nu(x,t),
\end{equation}
where $G_{k,\rho}(x,t) := \gamma_{k,\rho}(\lambda(x,t))$. Here the components of $\lambda$ are the principal curvatures, which we always label so that $\lambda_1 \leq \dots \leq \lambda_n$. At the coordinate level, equation \eqref{eq:CF_interp} is a fully nonlinear, weakly parabolic system for $F$.  

For each fixed $k \leq n-1$ the family $\gamma_{k,\rho}$ constitutes a nonlinear interpolation between the mean when $\rho =0$ and the $k$-harmonic mean when $\rho=1$. In fact, $\gamma_{k,\rho}$ can be expressed as a weighted harmonic mean of these two functions:
\[\gamma_{k,\rho}(z) = (\rho \gamma_{k,1}(z)^{-1} + (1-\rho) \gamma_{k,0}(z)^{-1})^{-1}.\]
In case $k = n$, $\gamma_{k,\rho}$ is simply the mean. For the present work, a crucial property is that $\gamma_{k,\rho}$ vanishes at the boundary of the $k$-positive cone for every $\rho>0$. As a consequence, maximum principle arguments show that any solution of \eqref{eq:CF_interp} starting from a compact strictly $k$-convex hypersurface remains (uniformly) $k$-convex. 

In case $\rho$ is small and positive, the flow \eqref{eq:CF_interp} has certain favourable properties that allow us to prove the following convexity estimate (the $\rho = 0$ case is precisely the estimate in \cite{Huisk-Sin99}). We write $M_t$ for the hypersurface $F(M,t)$. 

\begin{theorem}
\label{thm:conv_est_intro}
Fix $n \geq 4$ and $3 \leq k \leq n-1$. Then there is a constant $\rho_0 = \rho_0(n,k)$ in $(0,1]$ with the following property. Suppose $\rho \in (0,\rho_0]$ and let 
\[F: M\times [0,T) \to \mathbb{R}^{n+1}\]
be a compact smooth solution of the flow \eqref{eq:CF_interp}. Then for each $\varepsilon >0$ there is a positive constant $C_\varepsilon = C_\varepsilon(n, k, \rho, M_0)$ such that 
\[\lambda_1 \geq -\varepsilon G_{k,\rho} - C_\varepsilon\]
holds on $ M\times [0,T)$. 
\end{theorem}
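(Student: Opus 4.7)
The plan is to adapt the Stampacchia iteration scheme of Huisken-Sinestrari \cite{Huisk-Sin99} for the mean curvature flow, treating the $k$-harmonic mean summand in $\gamma_{k,\rho}$ as a controlled perturbation when $\rho$ is small. Writing $G = G_{k,\rho}$, fix positive constants $\eta$ and $\sigma$ and consider the pinching function
\[f_{\eta,\sigma} := \frac{(-\lambda_1 - \eta G)_+}{G^{1-\sigma}}.\]
A uniform bound $\sup_{M\times[0,T)} f_{\eta,\sigma} \le C$ implies $\lambda_1 \ge -\eta G - C G^{1-\sigma}$, and interpolating gives the theorem for any $\varepsilon > \eta$. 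It is thus enough to prove such a bound depending on $n,k,\rho,\eta,\sigma$ and the initial data.

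To set up the evolution equations I would compute $\partial_t G$ and $\partial_t h^i_j$ in the standard way from the first and second derivatives of $\gamma_{k,\rho}$, and then obtain an evolution inequality for $\lambda_1$ by a viscosity/support-function argument. At a point where $\lambda_1$ is simple the outcome is of the schematic form
\[\partial_t \lambda_1 \le \dot G^{pq} \nabla_p \nabla_q \lambda_1 + \ddot G^{pq,rs} \nabla_1 h_{pq} \nabla_1 h_{rs} + 2\sum_{i > 1} \frac{\dot G^{ii}}{\lambda_i - \lambda_1} (\nabla_1 h_{1i})^2 + \lambda_1 Q(\lambda),\]
with a zero-order reaction coefficient $Q$ depending on $\dot\gamma_{k,\rho}$ and $\lambda$. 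Combined with the evolution equation for $G$, this produces a parabolic inequality for $f_{\eta,\sigma}$ consisting of an elliptic part, a zero-order reaction term $R$, and gradient terms of both signs.

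The heart of the argument is a pointwise estimate
\[R \le C(n,k)\, G \, (-\lambda_1 - \eta G)_+\]
at points where $f_{\eta,\sigma} > 0$, modulo gradient contributions that can be absorbed into the elliptic and negative-gradient terms. Three ingredients enter here. First, the concavity of $\gamma_{k,\rho}$ makes $\ddot G^{pq,rs}$ a nonpositive form on $\sym$, so the second-derivative gradient term can be discarded with the correct sign. Second, the preserved $k$-convexity $\lambda_1 + \dots + \lambda_k \ge 0$ forces $\lambda_2 + \dots + \lambda_k \ge -\lambda_1$ whenever $\lambda_1$ is very negative, localising the bad part of $R$ to the first $k$ eigendirections. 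Third, for small $\rho$ the function $\gamma_{k,\rho}$ differs from the mean by an $O(\rho)$ multiple of the $k$-harmonic summand, for which Huisken-Sinestrari's sign computation already yields the required bound. The main obstacle is precisely this pointwise analysis: the new reaction terms coming from $\ddot\gamma_{k,1}$ scale like $\rho\cdot G^2$ and must be absorbed by the mean-curvature part, and this absorption is what selects the threshold $\rho_0(n,k)$.

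Once $R$ is under control, I would test the parabolic inequality for $f_{\eta,\sigma}$ against $f_{\eta,\sigma}^{p-1}$ and integrate over $M_t$, using the favorable gradient terms together with the concavity term to absorb the Cauchy-Schwarz contributions produced by differentiating powers of $G$. Applying the Michael-Simon Sobolev inequality and iterating in $p$ as in \cite{Huisk-Sin99} yields an $L^\infty$ bound on $f_{\eta,\sigma}$ with constant depending on $n,k,\rho,\eta,\sigma$ and $\int_{M_0} f_{\eta,\sigma}^{p_0}\, d\mu$, which is finite by compactness of the initial data. Choosing $\eta = \varepsilon/2$ and $\sigma = \sigma(n,k)$ sufficiently small then produces the constant $C_\varepsilon$ and completes the proof.
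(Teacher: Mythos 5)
Your plan captures the broad framework (Stampacchia iteration for a pinching ratio built from $-\lambda_1$), but it inverts the central difficulty of the paper and omits the ingredient that resolves it.

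The key error is your claim that ``the concavity of $\gamma_{k,\rho}$ makes $\ddot G^{pq,rs}$ a nonpositive form on $\sym$, so the second-derivative gradient term can be discarded with the correct sign.'' This is precisely backwards. If $\varphi$ is a lower support for $\lambda_1$, the evolution inequality (Proposition \ref{prop:lambda_1_evol}) reads
\[(\partial_t - \dot\gamma_\rho^{pq}\nabla_p\nabla_q)\varphi \;\geq\; \dot\gamma_\rho^{pq}A^2_{pq}\varphi + \ddot\gamma_\rho^{pq,rs}\nabla_1A_{pq}\nabla_1A_{rs} + 2\sum_{\lambda_i>\lambda_1}\frac{1}{\lambda_i-\lambda_1}\dot\gamma_\rho^{pq}\nabla_pA_{i1}\nabla_qA_{i1},\]
and the $\ddot\gamma_\rho$ term is \emph{nonpositive}, i.e.\ it weakens the lower bound for $\lambda_1$. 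Since you are trying to bound $(-\lambda_1-\eta G)_+$ from above via the maximum principle, this term has the \emph{wrong} sign and cannot be discarded --- the paper explicitly identifies it as the obstruction that rules out the Huisken--Sinestrari and ALM arguments (which use linear and convex speeds respectively, where this term is harmless or favorable). The good term in the display is the Andrews term, and the heart of the paper is the competition between it and the bad $\ddot\gamma_\rho$ contribution. It is this competition, not absorption into the ``mean-curvature part'', that is tuned by shrinking $\rho$ (via Lemma \ref{lem:improved_deriv_bds}, which shows $-\ddot\gamma_\rho^{pq,rs}B_{pq}B_{rs}\leq C'\rho|B|^2/\tr(Z)$ on compact cones).

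The second gap is the absence of the cylindrical estimate. The paper derives Proposition \ref{prop:cyl_est} and Corollary \ref{cor:cyl_est} before attacking $\lambda_1$, and uses them for two things that your sketch has no substitute for. First, they show that at high curvature scales $\lambda$ lies in the cone $\Gamma_{\alpha_1+\varepsilon_0}$, whose geometry depends only on $n,k$; this is what lets all the algebraic constants (and hence $\rho_0$) be independent of $M_0$, and why Theorem \ref{thm:Stamp} is formulated so that the hypotheses need only hold at high curvature. Without it, the cone you can guarantee is the $\bar\alpha$-cone from Lemma \ref{lem:unif_parabolic}, whose size depends on $M_0$, forcing $\rho_0$ to depend on $M_0$ as well, which is weaker than the theorem. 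Second, the cylindrical estimate supplies the normalizing quantity $h = G_\rho - \mu H + K$ with $\mu=\mu(n,k)$. Dividing by $h$ rather than $G^{1-\sigma}$ is not cosmetic: the evolution of $h$ injects the term $\mu(\varphi/h)\ddot\gamma_\rho^{pq,rs}\nabla^iA_{pq}\nabla_iA_{rs}$, which is favorable precisely because $\ddot\gamma_\rho$ is nonpositive, and the algebraic step in Proposition \ref{prop:f_evol_est} exploits $\dot\gamma_\rho^1 - \mu \geq 0$ (Lemma \ref{lem:+_first_deriv}) to control the dangerous $|\nabla_1 A_{11}|^2$ contribution without loss. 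Your $G^{1-\sigma}$ normalization produces neither of these, and in particular gives no mechanism to handle $|\nabla_1 A_{11}|^2$, which lies outside what the Andrews term sees.

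In short: your instinct that small $\rho$ makes the nonlinear Hessian term controllable is correct and is the right lever, but the sign of that term is the problem to be solved rather than a gift, and the resolution requires the cylindrical estimate and the tailored denominator $h$, neither of which appears in your sketch.
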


We leave out the case $k=2$, since a convexity estimate was established for the two-harmonic mean curvature flow in \cite{Bren-Huisk17}, and the same arguments imply a convexity estimate for \eqref{eq:CF_interp} whenever $k=2$ and $\rho \in (0,1]$. In this special situation, a convexity estimate can be deduced from a cylindrical estimate, which states that the quantity $H/G_{2,\rho}$ becomes optimal at a singularity of the flow. An analogous cylindrical estimate was established in \cite{Lang-Lyn19} for a large class of flows by concave speeds, but for solutions which are only $k$-convex, this estimate only implies that the second fundamental form becomes $(k-1)$-nonnegative at a singularity. When $k \geq 3$, it is not possible to prove a convexity estimate for \eqref{eq:CF_interp} only by comparing the mean curvature with the speed. 

The arguments used to establish convexity estimates for the mean curvature flow in \cite{Huisk-Sin99}, and for other flows by convex one-homogeneous speeds in \cite{ALM14}, are also not applicable in our setting. The problem is that a certain gradient term appearing in the evolution of the second fundamental form (see Section \ref{sec:evol_eqs}) has the right sign for controlling $\lambda_1$ from below when the speed is convex, but is unfavourable in this regard when the speed is concave. The presence of this term, which makes even the study of convex solutions moving by concave speeds subtle (see \cite{And07}, and also the discussion of this work in Remark \ref{rem:inverse-concavity}), gives rise to even more serious difficulties when we move to the $k$-convex setting. It is by introducing the parameter $\rho$ that we are able to overcome these difficulties and prove Theorem \ref{thm:conv_est_intro}.

Other important results prior to our own include convexity estimates for flows by certain non-homogeneous concave speeds \cite{Aless-Sin}, and for surfaces moving by a very large class of one-homogeneous speeds \cite{And-Lang-McCoy15}.

\subsection{Curved ambient spaces} One motivation for studying the $k$-harmonic mean curvature flow, and more generally \eqref{eq:CF_interp}, is that these flows preserve $k$-convexity even when the solution is immersed in a (compact) Riemannian background space. This is in stark contrast to the mean curvature flow, which in a general ambient space will preserve mean-convexity, but not $k$-convexity for any $k \leq n-1$. Andrews first studied the harmonic mean curvature flow in Riemannian background spaces in \cite{And94}, and showed that when the background sectional curvatures are nonnegative, the flow contracts any compact strictly convex initial hypersurface to a round point. 

Flows in a Riemannian background are also the focus in the above mentioned work \cite{Bren-Huisk17} by Brendle-Huisken, which considers the two-harmonic mean curvature flow in backgrounds satisfying a natural curvature pinching assumption. Combining their cylindrical/convexity estimate with a non-collapsing estimate for emebedded solutions due to Andrews-Langford-McCoy \cite{And-Lang-McCoy13}, Brendle-Huisken established curvature gradient estimates and were able to implement the surgery procedure developed in \cite{Huisk-Sin09}. This led to a far-reaching generalisation of the topological classification of two-convex embeddings in \cite{Huisk-Sin09}. In a separate paper, the convexity estimate in Theorem \ref{thm:conv_est_intro} will be used to prove curvature gradient estimates for embedded solutions of \eqref{eq:CF_interp}. 

There is also a direct analogue of Theorem \ref{thm:conv_est_intro} for solutions of \eqref{eq:CF_interp} immersed in ambient spaces satisfying a natural curvature pinching condition. Since all of the major difficulties in establishing this result already occur when the background is Euclidean, we focus on this case, and only discuss flows in more general manifolds in Section \ref{sec:curved_ambient}.

\subsection{Outline} Let us describe the structure of the paper and the arguments used to prove the convexity estimate. In Section \ref{sec:notation} we fix notation and state some preliminary results. Of particular importance is a general a priori pinching estimate. This gives conditions under which a function defined on a solution of a hypersurface flow must tend to zero at points where the speed becomes unbounded. The proof is by Stampacchia iteration and follows \cite{Huisk84} and \cite{Bren-Huisk17}. Section \ref{sec:notation} also contains algebraic estimates relating the derivatives of $\gamma_{k,\rho}$ to those of $\gamma_{k,1}$. These show in particular that, when $\rho$ is small, $\gamma_{k,\rho}$ is close to being linear on compact subsets of the $k$-positive cone. 

In Section \ref{sec:unif_parabolic} we begin studying solutions of \eqref{eq:CF_interp}, and show that for each $\rho >0$ the flow preserves uniform $k$-convexity. From this it follows that curvature quantities satisfy a uniformly parabolic equation along the flow. The estimates of this section also imply that every compact solution of \eqref{eq:CF_interp} becomes singular in finite time, and that singularity formation is characterised by blow-up of $G_{k,\rho}$. In Section \ref{sec:cyl_est} we derive from the general pinching theorem a cylindrical estimate showing that the ratio $H/G_{k,\rho}$ becomes optimal at a singularity. Using the identity 
\[\frac{H}{G_{k,\rho}} = \rho \frac{H}{G_{k,1}} + 1-\rho,\]
we conclude that the ratio $H/G_{k,1}$ also becomes optimal at a singularity, irrespective of the value of $\rho >0$. 

In Section \ref{sec:smallest_eig} we state an evolution equation for $\lambda_1$ (interpreted in the barrier sense) due to Andrews \cite{And07} and begin analysing the gradient terms appearing in this equation. There is a favourable term coming from the concavity of $\lambda_1$ as a function of the second fundamental form, and an unfavourable term coming from the concavity of $\gamma_{k,\rho}$. We eventually show that the former outweighs the latter in regions of sufficiently high curvature, provided $\rho$ is small relative to $n$ and $k$. This step makes crucial use of the observation that $H/G_{k,1}$ improves in regions of high curvature. 

In Section \ref{sec:pinching_quantity} we divide $\lambda_1$ by the quantity appearing in the cylindrical estimate and show that if $\rho$ is sufficiently small,
the evolution equation of the resulting quantity has the right properties for applying the general pinching estimate. With this done we finally choose $\rho_0$ and prove the convexity estimate in Section \ref{sec:conv_est}, before sketching the proof of a generalisation to flows in Riemannian manifolds in Section \ref{sec:curved_ambient}.

\begin{remark}
The techniques developed here can also be applied to other flows. For example, suppose $\Gamma$ is an open, symmetric, convex cone in $\mathbb{R}^n$ and let $\gamma: \Gamma \to \mathbb{R}$ be smooth, symmetric, positive, one-homogeneous, and concave. Suppose in addition that there is a continous extension of $\gamma$ to $\bar \Gamma$ which vanishes on $\partial \Gamma$, and for each $\rho \in (0,1]$ define 
\[\gamma_\rho(z) := ( \rho \gamma(z)^{-1} + (1-\rho)(z_1 + \dots + z_n)^{-1} )^{-1}, \qquad  z \in \Gamma.\]
Then, as long as $\rho$ is sufficiently small, compact solutions of the flow
\[\partial_t F(x,t) = - G_\rho(x,t)\nu(x,t), \qquad G_\rho(x,t) := \gamma_\rho(\lambda(x,t)),\]
satisfy a convexity estimate analogous to that in Theorem \ref{thm:conv_est_intro}. 
\end{remark}

\subsection*{Acknowledgements} The author is grateful to Gerhard Huisken for many interesting and helpful discussions regarding this paper, and to Mat Langford and Ben Andrews for sharing insights on their work.  

\section{Notation and preliminary results}
\label{sec:notation}

Let $M$ be a compact, orientable smooth manifold of dimension $n \geq 2$ and consider a smooth one-parameter family of orientable immersions $F: M\times[0,T) \to \mathbb{R}^{n+1}.$
At each fixed time the immersion $F_t := F(\cdot,t)$ induces a metric and second fundamental form on $M$, which we denote by $g$ and $A$ respectively. In coordinates these tensors have components
\[g_{pq} = \bigg \langle \frac{\partial F}{\partial x^p}, \frac{\partial F}{\partial x^q} \bigg \rangle, \qquad A_{pq} = - \bigg \langle \frac{\partial^2 F}{\partial x^p \partial x^q}, \nu \bigg \rangle,\]
where $\nu$ is the outward-pointing unit normal. With respect to any basis, we write $g^{pq}$ for the components of the inverse of $g_{pq}$. The eigenvalues of the Weingarten map $A^p_q = g^{pr}A_{rq}$ are the principal curvatures $\lambda = (\lambda_1, \dots, \lambda_n),$
which we always label so that 
\[\lambda_1 \leq \dots \leq \lambda_n.\] The mean curvature of the immersion is the sum of the principal curvatures, 
\[H := \lambda_1 + \dots +\lambda_n.\]
Note that with our sign convention round spheres have positive mean curvature. We also write
\[A^2_{pq} := A_p^r A_{rq}.\]
The measure induced on $M$ by $F_t$ is denoted $\mu_t$. At each fixed time we write $\nabla$ for the Levi-Civita connection on $M$ associated with $g$. We recall the Codazzi equations, which imply that $\nabla A$ is totally symmetric:
\[\nabla_p A_{qr} = \nabla_r A_{pq}.\]

We write $M_t$ for the immersed hypersurface $F_t(M)$. We view geometric quantities such as the mean curvature as being defined on $M\times[0,T)$, or on the slices $M_t$, as is convenient. We state a number of estimates where some constant is said to depend on $M_0$. This means that said constant depends on the geometric properties of $M_0$ viewed as a hypersurface. 

We will be interested in families of immersions which evolve according to an equation of the form 
\begin{equation}
\partial_t F(x,t) = - G(x,t) \nu(x,t) \label{eq:CF_general}
\end{equation}
where $G(x,t) = \gamma(\lambda(x,t))$ and $\gamma$ is some speed function defined on an open, symmetric, convex cone $\Gamma \subset  \mathbb{R}^n$. We restrict attention to speeds which are:
\begin{enumerate}[(i)]
\item smooth;
\item positive; \label{pos}
\item symmetric;
\item one-homogeneous, i.e.  $\gamma(s z) = s \gamma(z)$ for every $s > 0$.
\end{enumerate}
Let us call $\gamma$ an admissible speed if it has all of these properties. The simplest example of an admissible speed is the mean, 
\[\tr(z) := z_1 + \dots + z_n.\]
The $k$-harmonic mean and the functions $\gamma_{k,\rho}$ defined in the introduction are admissible speeds defined on the $k$-positive cone.

If $\Gamma$ and $\Gamma'$ are symmetric cones in $\mathbb{R}^n$ and
\[ \overline {\Gamma} \cap \{z \in \mathbb{R}^n : \tr(z) = 1\} \subset \interior \Gamma',\]
then we write $\Gamma \Subset \Gamma'$.

\subsection{Differentiating symmetric functions}

For a symmetric cone $\Gamma \in \mathbb{R}^{n+1}$, let us denote by $\sym(\Gamma)$ the set of symmetric $n\times n$-matrices with eigenvalues in $\Gamma$. If $\gamma : \Gamma \to \mathbb{R}$ is smooth and symmetric, it extends to a smooth function (which we also denote by $\gamma$) on $\sym(\Gamma)$ satisfying
\[\gamma(O ZO^{-1}) = \gamma(Z)\]
for every $Z \in \sym(\Gamma)$ and $O \in \orthog(n)$ (this follows from Glaeser's composition theorem \cite{Glaeser}). We write $\dot \gamma^{p}$ and $\ddot \gamma^{pq}$ for the first and second derivatives of $\gamma$ with respect to eigenvalues, so that for every $z \in \Gamma$ and $\xi \in \mathbb{R}^n$,
\[\frac{d}{ds} \Big|_{s = 0 } \gamma(z + s\xi)= \dot \gamma^p(z)\xi_p , \qquad \frac{d^2}{ds^2} \Big|_{s = 0 } \gamma(z + s\xi)= \ddot \gamma^{pq}(z)\xi_p\xi_q .\]
Similarly, we use $\dot \gamma^{pq}$ and $\ddot \gamma^{pq,rs}$ to denote derivatives with respect to matrix components, so that for each $Z \in \sym(\Gamma)$ and $B \in \sym(\mathbb{R}^n)$ there holds 
\[\frac{d}{ds} \Big|_{s = 0 } \gamma(Z + sB) = \dot \gamma^{pq}(z) B_{pq}, \qquad \frac{d^2}{ds^2} \Big|_{s = 0 }\gamma(Z + sB) = \ddot \gamma^{pq,rs}(Z) B_{pq} B_{rs}.\]
If $Z \in \sym (\Gamma)$ is a diagonal matrix with eigenvalues $z$ then for each $\xi \in \mathbb{R}^n$ we have
\begin{align*}
\dot \gamma^{pq}(Z)\xi_p \xi_q &= \dot \gamma^{p} (z) \xi_p^2.
\end{align*}
If in addition $z_1 < \dots <z_n$ then
\begin{align*}
\ddot \gamma^{pq,rs}(Z) B_{pq} B_{rs} &= \ddot \gamma^{pq} (z) B_{pp} B_{qq} + 2 \sum_{p > q} \frac{\dot \gamma^{p} (z) - \dot \gamma^{q} (z)}{z_p - z_q} |B_{pq}|^2
\end{align*}
for every $B \in \sym(\mathbb{R}^n)$ (this is proven in \cite{And07}). Hence, if $\gamma$ is increasing in each of its arguments, $\dot \gamma^{pq}$ is positive-definite. Moreover, if $\Gamma$ is convex, the second identity implies that $\gamma$ is concave as a function of $z \in \Gamma$ if and only if it is concave as a function of $Z \in \sym(\Gamma)$. One direction is trivial, and the other is implied by the following fact: if $\gamma$ is symmetric and concave on $\Gamma$ and $z \in \Gamma$ is such that $z_p > z_q$, there holds
\begin{equation}
\label{eq:concave_firstderivs}
\frac{\dot \gamma^p(z) - \dot \gamma^q(z)}{z_p - z_q} \leq 0.
\end{equation}

Suppose now that $F:M\times[0,T) \to \mathbb{R}^{n+1}$ is a family of hypersurfaces such that $\lambda(x,t) \in \Gamma$ for each $(x,t) \in M\times [0,T)$, and set $G(x,t) := \gamma(\lambda(x,t))$. On a spacetime neighbourhood about each point in $M\times[0,T)$ there is a smooth frame of tangent vectors to $M$ which is orthonormal with respect to the induced metric. With respect to this frame the second fundamental form can be viewed as a smooth field of symmetric matrices taking values in $\sym(\Gamma)$, and we can write $G(x,t) = \gamma(A(x,t))$, which shows in particular that $G$ is smooth on $M \times [0,T)$. Moreover, because of the $\orthog (n)$-invariance of $\gamma$, at each time the derivatives $\dot \gamma^{pq}(A)$ and $\ddot \gamma^{pq,rs}(A)$ are the components of tensor fields on $M$.

\subsection{Evolution equations}
\label{sec:evol_eqs}
For a smooth solution $F:M\times [0,T) \to \mathbb{R}^{n+1}$ of \eqref{eq:CF_general}, where $\gamma : \Gamma \to \mathbb{R}$ is an admissible speed, we have the following evolution equations for geometric quantities. These were derived by Huisken for mean curvature flow \cite{Huisk84} and by Andrews \cite{And94_euclid} in the fully nonlinear case. Here and throughout, we sum over repeated indices, and for the sake of simplicity all expressions involving indices are written with respect to an orthonormal frame.  The important first-order quantities are:
\begin{align*}
\partial_t g_{ij} &= - 2 G A_{ij}\\
\partial_t g^{ij} & = 2 G A^{ij}\\
\partial_t \nu &= \nabla G\\
\partial_t d\mu_t &= - H G d\mu_t.
\end{align*}
The speed satisfies a parabolic equation,
\[(\partial_t - \dot \gamma^{pq} \nabla_p \nabla_q) G =  \dot \gamma^{pq} A^2_{pq} G,\]
as does the second fundamental form,
\[(\partial_t - \dot \gamma^{pq} \nabla_p \nabla_q) A_j^i= \dot \gamma^{pq} A^2_{pq} A^i_j  + \ddot \gamma^{pq,rs} \nabla^i A_{pq} \nabla_j A_{rs}.\]
Taking the trace of the last equation yields the evolution of the mean curvature,
\[(\partial_t - \dot \gamma^{pq} \nabla_p \nabla_q)  H= \dot \gamma^{pq}  A^2_{pq} H  + \ddot \gamma^{pq,rs} \nabla^i A_{pq} \nabla_i A_{rs}.\]

\subsection{A general pinching estimate}

Like the mean curvature flow, the flows by concave nonlinear speeds studied in this paper have the property that compact solutions form finite-time singularities, at which the value of $G(x,t) = \gamma(\lambda(x,t))$ becomes unbounded. We are going to state a general pinching estimate, which gives conditions under which a function $u:M\times[0,T) \to \mathbb{R}^{n+1}$ necessarily tends to zero at points where $G$ is blowing up. To prove Theorem \ref{thm:conv_est_intro}, we will apply this result to a quantity built from the smallest eigenvalue of the second fundamental form.

Huisken established the first result of this kind for convex solutions of mean curvature flow in \cite{Huisk84}, and the technique (a Stampacchia iteration scheme using the Michael-Simon Sobolev inequality) has since been built upon and applied to non-convex solutions of mean curvature flow (see \cite{Huisk-Sin99}, \cite{Huisk-Sin99a}, \cite{Huisk-Sin09}, \cite{Brendle15}, \cite{Lang17}) and classes of fully nonlinear flows (see \cite{ALM14}, \cite{And-Lang-McCoy15}, \cite{And-Lang_cyl}, \cite{Bren-Huisk17}, \cite{Lang-Lyn19}). There have also been some extensions to high-codimension mean curvature flow \cite{Andrews-Baker}, \cite{Lynch-Nguyen}. A key step in all of these proofs is to establish a Poincar\'{e}-type inequality for functions on $M_t$ which are supported away from points where the geometry looks like that of a cylinder. This step is carried out in a simple and direct way in \cite{Bren-Huisk17}[Proposition 3.1], and their idea is built in to Theorem \ref{thm:Stamp}.

One feature which has not appeared explicitly in previous estimates of this kind is that all of the hypotheses only need to hold at high curvature scales, i.e., at points where $G$ is extremely large relative to the initial data. This turns out to be crucial in the proof of our convexity estimate.

For each $0 \leq m \leq n-1$ we write 
\[\Cyl_m := \{ (\underbrace{0,\dots,0}_{m \text{ entries}}, R, \dots, R) \in \mathbb{R}^n: R> 0 \}.\]
This is the set of possible eigenvalue $n$-tuples of the cylinders $\mathbb{R}^m \times \partial B_R^{n-m}(0)$ in $\mathbb{R}^{n+1}$. Define also
\[\Cyl := \bigcup_{0 \leq m \leq n-1} \Cyl_m.\]

\begin{theorem}
\label{thm:Stamp}
Let $\gamma:\Gamma \to \mathbb{R}$ be an admissible speed and suppose \[F:M\times [0,T) \to \mathbb{R}^{n+1}\] is a smooth family of immersions satisfying
\[\partial_t  F(x,t) = - G(x,t) \nu(x,t),\]
where $G(x,t) := \gamma(\lambda(x,t))$. Define
$L := \sup_{M_0} G$.
Let $u : M\times[0,T) \to \mathbb{R}$ be a smooth function satisfying $u \leq C_0$, and suppose there is a constant $k_0 >0$ and a symmetric cone $\Gamma' \Subset \Gamma \setminus \Cyl$ such that 
\[\lambda(x,t) \in \Gamma' \qquad \text{\emph{for all}} \qquad  (x,t) \in \supp(u) \cap \supp(G - k_0L).\] 
Assume also that there are positive constants $C_1$, $C_2$, $C_3$, $C_4$ and $\delta \in (0,2]$ such that 
\begin{align}
\label{eq:stamp_evol}
(\partial_t - \dot \gamma^{pq} \nabla_p \nabla_q) u \leq   C_1 \frac{|\nabla u|^2}{u}- \frac{1}{C_2} u\frac{|\nabla A|^2}{G^2}  + C_3|A|^{2-\delta} +C_4
\end{align}
holds at every point in $\supp(u) \cap \supp(G - k_0 L)$, and set $C' := (C_0, C_1, C_2, C_3, C_4)$. Then there are constants $\sigma \in (0,1)$ and $C$ depending on
\[n,\gamma, \Gamma', k_0, C', \delta, L, \mu_0(M), T,\]
such that 
\[u(x,t) \leq CG(x,t)^{-\sigma}\]
for each $(x,t) \in M\times[0,T)$.
\end{theorem}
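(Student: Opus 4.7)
The plan is a Stampacchia iteration in the spirit of Huisken, Huisken--Sinestrari, and Brendle--Huisken, modified so that all hypotheses need only be invoked on the high-curvature set $\{u > 0\} \cap \{G > k_0 L\}$ where \eqref{eq:stamp_evol} holds. Fix a small parameter $\sigma \in (0,1)$, to be chosen at the end, and consider the pinching function $f := u_+ G^\sigma$. Combining the evolution equations from Section \ref{sec:evol_eqs}, the identity
\begin{equation*}
(\partial_t - \dot\gamma^{pq}\nabla_p\nabla_q) G^\sigma = \sigma G^\sigma \dot\gamma^{pq} A^2_{pq} + \sigma(1-\sigma) G^{\sigma-2} \dot\gamma^{pq} \nabla_p G \nabla_q G,
\end{equation*}
the product rule for $u G^\sigma$, and the assumed inequality \eqref{eq:stamp_evol}, one derives on $\{u>0\}\cap\{G>k_0L\}$ a differential inequality of the schematic form
\begin{equation*}
(\partial_t - \dot\gamma^{pq}\nabla_p\nabla_q) f \leq C_1' \frac{|\nabla f|^2}{f} - \frac{1}{2C_2} f \frac{|\nabla A|^2}{G^2} + \sigma \dot\gamma^{pq} A^2_{pq} f + C_3 G^\sigma |A|^{2-\delta} + C_4 G^\sigma,
\end{equation*}
the cross term from differentiating the product $uG^\sigma$ having been absorbed via Cauchy--Schwarz and the bound $|\nabla G|^2 \leq C|\nabla A|^2$ available on $\Gamma'$.

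For $k$ chosen so large that $f > k$ forces both $G > k_0 L$ and $u > 0$ (possible because $u \leq C_0$), and for $p$ large, I test against $(f-k)_+^{p-1}$ and integrate over $M_t$. The integration-by-parts term $-p(p-1)\int (f-k)_+^{p-2} \dot\gamma^{pq} \nabla_p f \nabla_q f \, d\mu_t$ absorbs the bad $C_1' |\nabla f|^2/f$ contribution once $p > C_1'+1$. On $\supp(f-k)_+$ the curvature vector lies in $\Gamma'$, and because $\Gamma' \Subset \Gamma \setminus \Cyl$ one can invoke a Poincar\'e-type inequality in the spirit of \cite[Proposition 3.1]{Bren-Huisk17}: the bad integral $\sigma \int \dot\gamma^{pq} A^2_{pq} f (f-k)_+^{p-1} d\mu_t$ is controlled by a $\sigma$-independent constant (depending only on $n$ and $\Gamma'$) times $\int f (f-k)_+^{p-1} |\nabla A|^2/G^2 \, d\mu_t$ plus a lower-order error, so that choosing $\sigma$ small makes the bad term absorbable into the good gradient term.

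Combining these ingredients with the Michael--Simon--Sobolev inequality produces a nonlinear integral recursion on $\sup_{t \in [0,T)} \int_{M_t} (f-k)_+^p d\mu_t$ in the parameters $p$ and $k$, in exact analogy with \cite[Section 5]{Huisk-Sin99}. The subquadratic term $C_3 |A|^{2-\delta}$ produces a perturbation whose $k$-decay is governed by the strictly positive exponent $\delta$ and therefore does not spoil the iteration; the constant $C_4 G^\sigma$ is handled similarly. Stampacchia iteration then yields a finite $k$ for which $(f-k)_+ \equiv 0$, i.e., $u G^\sigma \leq k$ pointwise, which is the claimed bound. The principal obstacle is the Poincar\'e step of the second paragraph: converting the $\orthog(n)$-invariant algebraic fact that $\Gamma'$ is disjoint from $\Cyl$ into a quantitative integrated comparison between $\dot\gamma^{pq} A^2_{pq}$ and $|\nabla A|^2/G^2$ on $\supp(f-k)_+$ is exactly where the non-cylindrical hypothesis is consumed, and it is the nontrivial geometric input Brendle--Huisken's technique provides in the present setting. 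A secondary technical point is the careful bookkeeping of how constants depend on $T, L, \mu_0(M), k_0,$ and the $C_i$'s throughout the iteration, so that the dependencies in the conclusion are as advertised.
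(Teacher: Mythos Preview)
Your proposal is correct and follows exactly the approach the paper indicates: the paper's own proof consists of a two-line sketch referring to the Stampacchia iteration from \cite{Huisk84} combined with the Poincar\'e-type inequality of \cite{Bren-Huisk17}, with details deferred to the author's thesis. Your outline in fact supplies more detail than the paper does, and the key steps you identify---introducing $f = u_+ G^\sigma$, testing against $(f-k)_+^{p-1}$ with $k$ large enough to land in the high-curvature region, absorbing the $|\nabla f|^2/f$ term for $p$ large, and consuming the non-cylindrical hypothesis via the Brendle--Huisken Poincar\'e inequality to control the reaction term for $\sigma$ small---are precisely the intended ingredients.
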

\begin{proof}
Combine the Stampacchia iteration procedure from \cite{Huisk84} with the Poincar\'{e}-type inequality in \cite{Bren-Huisk17}[Proposition 3.3]. Details can be found in the author's thesis \cite{Lynch_thesis}.
\end{proof}

\begin{remark}
\label{rem:weak_pinching}
The conclusion of the theorem remains true if, rather than being smooth, $u$ is only locally Lipschitz and satisfies the differential inequality \eqref{eq:stamp_evol} in the following weak sense: for every nonnegative locally Lipschitz function 
\[\varphi : M \times[0,T) \to \mathbb{R}\]
satisfying 
\[\supp(\varphi) \subset \supp(u) \cap \supp (G - k_0L),\]
the inequality
\begin{align}
\label{eq:weak_ineq}
 \int_{M_t}  \varphi \partial_t u\, d\mu_t &\leq   - \int_{M_t} \dot \gamma^{pq} \nabla_p u \nabla_q \varphi  \, d\mu_t  - \int_{M_t} \varphi \ddot \gamma^{rs,pq} \nabla_p A_{rs} \nabla_q u \, d\mu_t \notag\\
 & + C_1 \int_{M_t} \varphi  \frac{|\nabla u|^2}{u} \, d\mu_t- \frac{1}{C_2} \int_{M_t} \varphi u\frac{|\nabla A|^2}{G^2}   \, d\mu_t \notag \\
 &+ C_3 \int_{M_t} |A|^{2-\delta} \varphi \,d\mu_t + C_4 \int_{M_t} \varphi \,d\mu_t 
\end{align}
holds for almost every $t \in [0,T)$. If $u$ is smooth and satisfies \eqref{eq:stamp_evol} then this inequality is a consequence of the divergence theorem.
\end{remark}

\subsection{Algebraic properties of $\gamma_{k,\rho}$}
 
Suppose $n \geq 4$ and $3 \leq n \leq k-1$ are fixed, and write $\Gamma$ for the $k$-positive cone, i.e.,
\[\Gamma := \{ z \in \mathbb{R}^{n+1} :  z_{i_1} + \dots +z_{i_k} >0 \; \forall \; {1 \leq i_1 < \dots < i_k \leq n} \}.\]
We establish here some basic properties of the functions $\gamma_{k,\rho}$ defined in the introduction, each of which is a concave admissible speed on $\Gamma$. In fact, for each $\rho >0$ the function $\gamma_{k,\rho}$ is strictly concave in off-radial directions, by which we mean
\[\ddot \gamma_{k,\rho}^{pq}(z) \xi_p \xi_q \leq 0\]
for each $z \in \Gamma$ and $\xi \in \mathbb{R}^n$ with equality if and only if $\xi$ is a multiple of $z$. Notice that the Hessian of $\gamma$ must vanish in radial directions by one-homogeneity. 

For each $\alpha < \infty$ we define a convex cone 
\[\Gamma_\alpha:= \{z \in \Gamma: \tr(z) \leq \alpha \gamma_{k,1}(z)\},\]
and observe that since $\gamma_{k,1}$ is strictly concave in off-radial directions, for each $\alpha< \infty$,
\[\Gamma_\alpha \Subset \Gamma.\]
The family of smooth convex hypersurfaces $\partial \Gamma_\alpha\setminus\{0\}$ foliates $\Gamma \setminus \Cyl_0$ as $\alpha$ ranges over the interval
\[ \frac{n}{\gamma_{k,1}(1,\dots,1)}<\alpha< \infty.\]

Manipulating the definition of $\gamma_{k,\rho}$ we obtain the following inequalities relating it to $\gamma_{k,1}$ and the trace:
\begin{lemma}
\label{lem:zeroth-order_ests}
For each $\rho \in (0,1]$ and $z \in \Gamma$ there holds
\[\gamma_{k,1}(z) \leq \gamma_{k,\rho} (z) \leq \min\{\tr(z), \rho^{-1} \gamma_{k,1}(z)\}.\]
\end{lemma}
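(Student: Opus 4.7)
The plan is to deduce all three bounds from the weighted harmonic mean identity
\[\gamma_{k,\rho}(z)^{-1} = \rho\, \gamma_{k,1}(z)^{-1} + (1-\rho)\, \tr(z)^{-1}\]
already recorded in the introduction, together with the single zeroth-order comparison $\gamma_{k,1}(z) \leq \tr(z)$ on $\Gamma$. First I would note the following elementary fact: if $0 < a \leq b$ and $\rho \in (0,1]$, then the weighted harmonic mean $M_\rho := (\rho a^{-1} + (1-\rho)b^{-1})^{-1}$ automatically satisfies $a \leq M_\rho \leq b$, and also $M_\rho \leq \rho^{-1} a$ (directly from $M_\rho^{-1} \geq \rho a^{-1}$). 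Applying this with $a = \gamma_{k,1}(z)$, $b = \tr(z)$, $M_\rho = \gamma_{k,\rho}(z)$ yields exactly the three claimed inequalities, provided one knows $\gamma_{k,1}(z) \leq \tr(z)$.

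To establish $\gamma_{k,1}(z) \leq \tr(z)$ on $\Gamma$, I would run a Cauchy--Schwarz (equivalently AM--HM) argument on the $\binom{n}{k}$ positive partial sums $S_I := z_{i_1} + \dots + z_{i_k}$, indexed by $I = \{i_1 < \dots < i_k\} \subset \{1,\dots,n\}$. Cauchy--Schwarz gives
\[\Bigl(\sum_I S_I\Bigr)\Bigl(\sum_I S_I^{-1}\Bigr) \geq \binom{n}{k}^2,\]
and a routine double count yields $\sum_I S_I = \binom{n-1}{k-1}\tr(z) = \tfrac{k}{n}\binom{n}{k}\tr(z)$, since each $z_i$ occurs in $\binom{n-1}{k-1}$ of the $S_I$. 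Consequently
\[\gamma_{k,1}(z)^{-1} \;=\; \sum_I S_I^{-1} \;\geq\; \frac{n\binom{n}{k}}{k\,\tr(z)} \;\geq\; \frac{1}{\tr(z)},\]
where the final step uses $n\binom{n}{k} \geq k$ for all $1 \leq k \leq n$. This gives $\gamma_{k,1}(z) \leq \tr(z)$ and completes the reduction.

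I do not anticipate any real obstacle here; the whole statement is an algebraic manipulation of the definition of $\gamma_{k,\rho}$. The only point meriting slight care is that positivity of every $S_I$ on the $k$-positive cone is precisely what makes the Cauchy--Schwarz step legitimate, and it is this positivity (which defines $\Gamma$) that is used throughout.
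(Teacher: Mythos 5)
Your proof is correct, and it follows essentially the argument the paper gestures at with the phrase ``manipulating the definition of $\gamma_{k,\rho}$'': everything reduces to the weighted harmonic mean identity once one knows $\gamma_{k,1}(z) \leq \tr(z)$ on $\Gamma$. Your Cauchy--Schwarz argument for this last comparison is a clean way to close the gap; a somewhat lighter alternative is to observe that $\gamma_{k,1}(z)^{-1} = \sum_I S_I^{-1} \geq (\min_I S_I)^{-1}$, so $\gamma_{k,1}(z) \leq \min_I S_I$, and that if one takes $I$ to index the $k$ smallest entries of $z$ then the remaining $n-k$ entries are all positive (otherwise the defining condition of the $k$-positive cone fails), hence $\min_I S_I \leq \tr(z)$. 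Both routes are elementary; yours has the small advantage of not requiring one to order the entries or track signs, and in fact gives the stronger bound $\gamma_{k,1}(z) \leq \tfrac{k}{n\binom{n}{k}}\tr(z)$.
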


The first and second derivatives of $\gamma_{k,\rho}$ are related to those of $\gamma_{k,1}$ as follows. For each $Z \in \sym(\Gamma)$ and $B \in \sym(\mathbb{R}^n)$ we have:
\begin{align*}
\dot \gamma_{k,\rho}^{pq}(Z) &= \rho \frac{\gamma_{k,\rho}(Z)^2}{\gamma_{k,1}(Z)^2}  \dot \gamma_{k,1}^{pq}(Z)  + (1-\rho)\frac{\gamma_{k,\rho}(Z)^2}{\tr(Z)^2} \delta^{pq},
\end{align*}
and
\begin{align*}
\ddot \gamma_{k,\rho}^{pq, rs}(Z) B_{pq} B_{pq} &= \rho \frac{\gamma_{k,\rho}(Z)^2}{\gamma_{k,1}(Z)^2} \ddot \gamma_{k,1}^{pq,rs}(Z) B_{pq} B_{rs} \notag \\
&-2 \rho (1-\rho)\frac{\gamma_{k,\rho}(Z)^3}{\gamma_{k,1}(Z)  \tr(Z) } \bigg( \frac{\dot \gamma_{k,1}^{pq} (Z) B_{pq}}{\gamma_{k,1}(Z) } -\frac{\tr(B)}{\tr(Z) } \bigg)^2.
\end{align*}
The following two lemmata are obtained by combining these identities with Lemma \ref{lem:zeroth-order_ests}.
\begin{lemma}
\label{lem:first-order_ests}
For each $\rho \in (0, 1]$ and $Z \in \sym( \Gamma)$ there holds:
\begin{align*}
\dot \gamma_{k,\rho}^{pq}(Z) &\leq \min\bigg\{\frac{1}{\rho}, \frac{\tr(Z)^2}{\gamma_{k,1}(Z)^2} \bigg\}  \dot \gamma_{k,1}^{pq}(Z) +  \delta_{pq};\\
\dot \gamma_{k,\rho}^{pq}(Z)  &\geq \rho \dot \gamma_{k,1}^{pq}(Z) + (1-\rho)\frac{\gamma_{k,1}(Z)^2}{\tr(Z)^2} \delta_{pq}.
\end{align*}
\end{lemma}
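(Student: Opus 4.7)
The plan is to derive both inequalities directly from the identity for $\dot\gamma_{k,\rho}^{pq}(Z)$ stated just above the lemma, using the zeroth-order bounds from Lemma \ref{lem:zeroth-order_ests} to control the two scalar coefficients that appear. Recall that $\gamma_{k,1}$ is increasing in each of its arguments, so $\dot\gamma_{k,1}^{pq}(Z)$ is positive-semidefinite; the identity matrix is trivially positive-definite. Hence every estimate below is to be read in the sense of quadratic forms on $\sym(\mathbb{R}^n)$, and it suffices to bound the two nonnegative scalar factors
\[a_\rho(Z) := \rho\,\frac{\gamma_{k,\rho}(Z)^2}{\gamma_{k,1}(Z)^2}, \qquad b_\rho(Z) := (1-\rho)\,\frac{\gamma_{k,\rho}(Z)^2}{\tr(Z)^2}\]
separately.

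For the upper bound, I would apply $\gamma_{k,\rho} \leq \rho^{-1}\gamma_{k,1}$ to obtain $a_\rho(Z) \leq \rho^{-1}$, and alternatively $\gamma_{k,\rho} \leq \tr(Z)$ together with $\rho \leq 1$ to obtain $a_\rho(Z) \leq \tr(Z)^2/\gamma_{k,1}(Z)^2$; taking the minimum gives the first coefficient in the claimed upper bound. For the term involving $\delta_{pq}$, the bound $\gamma_{k,\rho} \leq \tr(Z)$ and $1-\rho \leq 1$ immediately yield $b_\rho(Z) \leq 1$, which produces the $+\,\delta_{pq}$ term after we drop the factor $(1-\rho)$. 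Summing gives the first inequality.

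For the lower bound, I would instead use $\gamma_{k,\rho} \geq \gamma_{k,1}$ from Lemma \ref{lem:zeroth-order_ests}, which gives $a_\rho(Z) \geq \rho$ and $b_\rho(Z) \geq (1-\rho)\gamma_{k,1}(Z)^2/\tr(Z)^2$. Inserting these into the identity and again using positive-semidefiniteness of $\dot\gamma_{k,1}^{pq}(Z)$ yields
\[\dot\gamma_{k,\rho}^{pq}(Z) \geq \rho\,\dot\gamma_{k,1}^{pq}(Z) + (1-\rho)\,\frac{\gamma_{k,1}(Z)^2}{\tr(Z)^2}\,\delta_{pq},\]
which is the second claimed inequality. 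There is no serious obstacle here: the proof is a term-by-term application of Lemma \ref{lem:zeroth-order_ests} to the explicit formula, and the only thing to be careful about is that one chooses the right bound on $\gamma_{k,\rho}/\gamma_{k,1}$ and $\gamma_{k,\rho}/\tr(Z)$ (upper or lower) in each case, matching the direction of the inequality being proved.
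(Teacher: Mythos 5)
Your proof is correct and is precisely the argument the paper has in mind: the paper states the identity for $\dot\gamma_{k,\rho}^{pq}(Z)$ immediately before the lemma and then remarks that the first- and second-order estimates ``are obtained by combining these identities with Lemma \ref{lem:zeroth-order_ests}.'' Your reading of the inequalities as quadratic-form bounds, and your use of the positive-semidefiniteness of $\dot\gamma_{k,1}^{pq}$ to justify bounding the two scalar coefficients term-by-term, is exactly what that remark elides.
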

\begin{lemma}
\label{lem:second-order_ests}
For each $\rho \in (0,1]$, $Z \in \sym(\Gamma)$ and $B \in \sym(\mathbb{R}^n)$ we have
\[\ddot \gamma_{k,\rho}^{pq,rs} (Z) B_{pq} B_{rs}\leq \rho \ddot \gamma_{k,1}^{pq,rs} (Z) B_{pq} B_{rs},\]
and
\begin{align*}
-\ddot \gamma_{k,\rho}^{pq,rs}&(Z) B_{pq} B_{rs} \leq \min\bigg\{\rho^{-2}, \rho \frac{\tr(Z)^3}{ \gamma_{k,1}(Z)^3}\bigg\} \bigg(-\ddot \gamma_{k,1}^{pq,rs}(Z)B_{pq} B_{rs} + C\frac{|B|^2}{\tr(Z)}\bigg),
\end{align*}
where $C = C(n,k)$.
\end{lemma}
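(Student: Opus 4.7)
The plan is to derive both inequalities from the explicit identity for $\ddot \gamma_{k,\rho}^{pq, rs}(Z) B_{pq} B_{rs}$ displayed just before the statement, combined with Lemma \ref{lem:zeroth-order_ests}. That identity decomposes $\ddot\gamma_{k,\rho}^{pq,rs}B_{pq}B_{rs}$ as a multiple of $\ddot\gamma_{k,1}^{pq,rs}B_{pq}B_{rs}$ with positive prefactor $\rho (\gamma_{k,\rho}/\gamma_{k,1})^2$, plus a manifestly nonpositive ``first-order'' term with coefficient $-2\rho(1-\rho)\gamma_{k,\rho}^3/(\gamma_{k,1}\tr(Z))$ multiplied by the square $(\dot \gamma_{k,1}^{pq}B_{pq}/\gamma_{k,1} - \tr(B)/\tr(Z))^2$. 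For the first inequality, the first-order term may simply be discarded because it is $\leq 0$. The concavity of $\gamma_{k,1}$ gives $\ddot\gamma_{k,1}^{pq,rs}B_{pq}B_{rs} \leq 0$, and the prefactor $\rho(\gamma_{k,\rho}/\gamma_{k,1})^2$ is at least $\rho$ by the lower bound $\gamma_{k,1}\leq \gamma_{k,\rho}$ in Lemma \ref{lem:zeroth-order_ests}; multiplying a nonpositive quantity by something $\geq \rho$ only makes it smaller, so the claim follows.

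For the second inequality I would flip signs and bound each of the two resulting positive terms separately. The Hessian contribution has prefactor $\rho(\gamma_{k,\rho}/\gamma_{k,1})^2$, which Lemma \ref{lem:zeroth-order_ests} bounds by $\rho \min\{\rho^{-2}, \tr(Z)^2/\gamma_{k,1}^2\} = \min\{\rho^{-1},\rho\tr(Z)^2/\gamma_{k,1}^2\}$, and this is itself $\leq \min\{\rho^{-2},\rho\tr(Z)^3/\gamma_{k,1}^3\}$ since $\rho\leq 1$ and $\tr(Z)\geq \gamma_{k,1}$. The first-order contribution has prefactor $2\rho(1-\rho)\gamma_{k,\rho}^3/(\gamma_{k,1}\tr(Z))$, which Lemma \ref{lem:zeroth-order_ests} similarly bounds by $2\min\{\rho^{-2}\gamma_{k,1}^2/\tr(Z),\rho\tr(Z)^2/\gamma_{k,1}\}$. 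Comparing with the right-hand side of the target estimate, it remains only to establish the scale-invariant algebraic inequality
\[\gamma_{k,1}(Z)^2\bigg(\frac{\dot\gamma_{k,1}^{pq}(Z)B_{pq}}{\gamma_{k,1}(Z)}-\frac{\tr(B)}{\tr(Z)}\bigg)^{\!2} \leq C(n,k)\,|B|^2,\]
which after expanding the square and using $\gamma_{k,1}\leq \tr(Z)$ together with $|\tr(B)|\leq \sqrt{n}\,|B|$ reduces to the pointwise estimate $|\dot\gamma_{k,1}^{pq}(Z)B_{pq}| \leq C(n,k)|B|$.

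This last bound is the one genuinely nontrivial step and the only place where the explicit formula for $\gamma_{k,1}$ is used. Passing to a basis diagonalising $Z$ with eigenvalues $z$, a direct computation from $\gamma_{k,1}(z)^{-1}=\sum_{I}(z_I)^{-1}$ (with $I$ ranging over $k$-subsets of $\{1,\dots,n\}$ and $z_I:=\sum_{i\in I}z_i$) gives
\[\dot\gamma_{k,1}^p(z) = \gamma_{k,1}(z)^2 \sum_{I\ni p}(z_I)^{-2}.\]
The key observation is that the defining identity forces $(z_I)^{-1}\leq \gamma_{k,1}(z)^{-1}$, hence $z_I\geq \gamma_{k,1}(z)$ for every $k$-subset $I$, so $\dot\gamma_{k,1}^p(z) \leq \binom{n-1}{k-1}$ uniformly in $z\in\Gamma$. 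Applying Cauchy--Schwarz in the diagonal basis then yields $|\dot\gamma_{k,1}^{pq}(Z)B_{pq}|\leq C(n,k)|B|$, which closes the argument. The main obstacle I anticipate is precisely this uniform bound on the eigenvalues of $\dot\gamma_{k,1}$: it would not hold for a general concave one-homogeneous speed, and relies essentially on the harmonic-sum structure of $\gamma_{k,1}$.
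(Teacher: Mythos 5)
Your proof is correct and follows exactly the route the paper indicates (combine the displayed identity for $\ddot\gamma_{k,\rho}^{pq,rs}B_{pq}B_{rs}$ with the zeroth-order estimates of Lemma \ref{lem:zeroth-order_ests}); the one place where you supply a genuine detail the paper leaves implicit is the uniform bound $\dot\gamma_{k,1}^p(z)\leq C(n,k)$ on $\Gamma$, which you establish correctly from the harmonic-sum formula via $z_I\geq\gamma_{k,1}(z)$.
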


Applying these results, we obtain the following uniform estimates on sets compactly contained away from the boundary of $\Gamma$. The second-derivative estimate implies that as $\rho \to 0$, the functions $\gamma_{k,\rho}$ converge to the mean in the $C^2$-norm on compact subsets of $\Gamma$.

\begin{lemma}
\label{lem:improved_deriv_bds}
Consider a symmetric cone $\Gamma' \Subset \Gamma$ and suppose $Z \in \sym (\Gamma')$. Then there is a positive constant $C = C(n,k,\Gamma')$ such that 
\[C^{-1} |\xi|^2 \leq \dot \gamma_{k,\rho}^{pq}(Z) \xi_p \xi_q \leq C |\xi|^2\]
for every $\xi \in \mathbb{R}^n$. In addition, there is a positive constant $C' = C'(n,k,\Gamma')$ such that 
\[- \ddot \gamma_{k,\rho}^{pq,rs}(Z) B_{pq} B_{rs} \leq C' \rho\frac{|B|^2}{\tr(Z)}\]
for every $B \in \sym(\mathbb{R}^n)$.
\end{lemma}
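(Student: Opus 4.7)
\emph{Proof plan.} The plan is to reduce to a compact set of matrices by one-homogeneity and then to read off both estimates from Lemmas \ref{lem:first-order_ests} and \ref{lem:second-order_ests}. Since $\gamma_{k,\rho}$ is one-homogeneous in $Z$, the tensor $\dot\gamma_{k,\rho}^{pq}(Z)$ is $0$-homogeneous in $Z$ while $\ddot\gamma_{k,\rho}^{pq,rs}(Z)$ is $(-1)$-homogeneous, and both sides of the claimed inequalities rescale identically under $Z\mapsto sZ$; hence I may assume $\tr(Z)=1$. Under this normalisation the eigenvalues of $Z$ lie in the set $K:=\overline{\Gamma'}\cap\{\tr=1\}$, which by the hypothesis $\Gamma'\Subset\Gamma$ is a compact subset of $\Gamma$. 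On $K$ the positive continuous function $\gamma_{k,1}$ is bounded above and below by positive constants depending on $(n,k,\Gamma')$, and the tensors $\dot\gamma_{k,1}^{pq}(Z)$, $\ddot\gamma_{k,1}^{pq,rs}(Z)$ are continuous and therefore uniformly bounded in operator norm.

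\emph{First derivatives.} The upper bound in Lemma \ref{lem:first-order_ests},
\[\dot\gamma_{k,\rho}^{pq}(Z)\leq \frac{\tr(Z)^2}{\gamma_{k,1}(Z)^2}\dot\gamma_{k,1}^{pq}(Z)+\delta^{pq},\]
immediately gives $\dot\gamma_{k,\rho}^{pq}\xi_p\xi_q\leq C|\xi|^2$, since $\tr^2/\gamma_{k,1}^2$ and the eigenvalues of $\dot\gamma_{k,1}^{pq}$ are bounded on $K$. For the matching lower bound I apply
\[\dot\gamma_{k,\rho}^{pq}(Z)\geq \rho\,\dot\gamma_{k,1}^{pq}(Z)+(1-\rho)\frac{\gamma_{k,1}(Z)^2}{\tr(Z)^2}\delta^{pq}.\]
Because $\gamma_{k,1}$ is strictly increasing in each argument on $\Gamma$, the discussion in Section \ref{sec:notation} shows that $\dot\gamma_{k,1}^{pq}$ is positive-definite on $\Gamma$, and by compactness of $K$ its smallest eigenvalue is bounded below by some $c>0$; the ratio $\gamma_{k,1}^2/\tr^2$ is likewise bounded below by some $c'>0$. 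The convex combination of two uniformly positive-definite forms is then uniformly positive-definite independently of $\rho\in(0,1]$.

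\emph{Second derivatives.} For the Hessian estimate I select the second branch of the minimum in Lemma \ref{lem:second-order_ests}, namely
\[-\ddot\gamma_{k,\rho}^{pq,rs}(Z)B_{pq}B_{rs}\leq \rho\,\frac{\tr(Z)^3}{\gamma_{k,1}(Z)^3}\Bigl(-\ddot\gamma_{k,1}^{pq,rs}(Z)B_{pq}B_{rs}+C\frac{|B|^2}{\tr(Z)}\Bigr).\]
On $K$ the factor $\tr^3/\gamma_{k,1}^3$ is uniformly bounded and, by continuity of $\ddot\gamma_{k,1}$, $-\ddot\gamma_{k,1}^{pq,rs}(Z)B_{pq}B_{rs}\leq C''|B|^2$. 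Combining these with $\tr(Z)=1$ yields $-\ddot\gamma_{k,\rho}^{pq,rs}B_{pq}B_{rs}\leq C'\rho|B|^2$, and undoing the normalisation restores the factor $\tr(Z)$ in the denominator with the correct homogeneity. The only point requiring any thought is the choice of branch in Lemma \ref{lem:second-order_ests}: the $\rho^{-2}$ branch degenerates as $\rho\to 0$, so the $\rho\,\tr^3/\gamma_{k,1}^3$ branch is essential for producing the linear factor of $\rho$ in the conclusion. Once that choice is made, the remainder of the argument is a routine compactness check.
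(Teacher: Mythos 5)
Your proof is correct and follows essentially the same route as the paper: reduce to the compact slice $\tr(Z)=1$ by homogeneity, extract uniform bounds on $\dot\gamma_{k,1}$, $\ddot\gamma_{k,1}$ and $\gamma_{k,1}/\tr$ from compactness of $\overline{\Gamma'}\cap\{\tr=1\}$, and then transfer these to $\gamma_{k,\rho}$ via Lemmas~\ref{lem:first-order_ests} and~\ref{lem:second-order_ests}. Your explicit remark that the $\rho\,\tr^3/\gamma_{k,1}^3$ branch of the minimum in Lemma~\ref{lem:second-order_ests} is the one that produces the crucial linear factor of $\rho$ is a point the paper leaves implicit, but there is no substantive difference in approach.
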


\begin{proof}
First observe that since $\Gamma' \Subset \Gamma$ the set
\[ \{Y \in \sym ({\Gamma'}) : \tr(Y) = 1\}\]
is precompact in $\sym(\Gamma)$. Consequently, since $\dot \gamma_{k,1}^{pq}$ is smooth and positive-definite in $\Gamma$, the constant
\[c:= \inf \{\dot \gamma_{k,1}^{pq}(Y) \xi_p \xi_q : Y \in \sym (\Gamma' ), \; \tr(Y) = 1, \; \xi \in \mathbb{R}^{n}, \; |\xi|=1\},\]
is well defined and positive. The one-homogeneity of $\gamma_{k,1}$ implies that 
\[\dot \gamma_{k,1}^{pq}(sY) = \dot \gamma_{k,1}^{pq}(Y)\]
for every $Y \in \sym(\Gamma)$ and $s >0$, so we conclude that 
\[\dot \gamma_{k,1}^{pq}(Y) \xi_p \xi_q = \dot \gamma_{k,1}^{pq}(\tr(Y)^{-1} Y) \xi_p \xi_q \geq c |\xi|^2\]
for every $Y \in \sym (\Gamma')$. The constant $c$ depends only on $n$, $k$ and $\Gamma'$, so this gives the desired lower bound for $\dot \gamma_{k,1}^{pq}(Z)$. 

The assumption $\Gamma' \Subset \Gamma$ also implies that
\[\gamma_{k,1}(Z)^2 \geq c(n,k,\Gamma') \tr(Z)^2,\]
where we have made $c$ smaller as necessary. Hence by Lemma \ref{lem:first-order_ests} there holds
\[\dot \gamma_{k,\rho}^{pq}(Z) \xi_p \xi_q \geq \rho c |\xi|^2 + (1-\rho) c |\xi|^2 = c |\xi|^2,\]
which gives the desired lower bound for $\gamma_{k,\rho}$.

The remaining inequalities are proven by very similar arguments, making use of Lemma \ref{lem:first-order_ests} and Lemma \ref{lem:second-order_ests}, and noting that by one-homogeneity
\[\ddot \gamma_{k,1}^{pq,rs}(sY)= s^{-1} \ddot \gamma_{k,1}^{pq,rs}(Y)\]
for each $Y \in \sym (\Gamma)$ and $s >0$. 
\end{proof}

\section{Uniform parabolicity}
\label{sec:unif_parabolic}

For the remainder of the paper let $n\geq 4$ and $3 \leq k \leq n-1$ be fixed. To ease notation we drop the index $k$, and for each $\rho\in(0,1]$, simply write $\gamma_\rho$ for the function 
\begin{align*}
\gamma_\rho(z) := \Bigg(\sum_{1 \leq i_1 < \dots < i_k \leq n} \frac{\rho}{z_{i_1} + \dots + z_{i_k}} + \frac{1-\rho}{z_1 + \dots + z_n} \Bigg)^{-1}.
\end{align*} 
Each of the functions $\gamma_\rho$ is a concave admissible speed on the $k$-positive cone $\Gamma$. To be precise,
\[\Gamma := \{ z \in \mathbb{R}^{n+1} :  z_{i_1} + \dots +z_{i_k} >0 \; \forall \; {1 \leq i_1 < \dots < i_k \leq n} \}.\]

Let us fix a $\rho \in (0,1]$ and consider a smooth family of strictly $k$-convex hypersurfaces,
\[F:M\times[0,T) \to \mathbb{R}^{n+1},\]
which we assume are evolving according to 
\begin{equation}
\label{eq:CF}
\partial_t F(x,t) = - G_\rho(x,t) \nu(x,t),
\end{equation}
where $G_\rho(x,t) := \gamma_\rho(\lambda(x,t)).$ Assume without loss of generality that $T$ is the maximal time of smooth existence for $F$ (for a proof of short-time existence of the flow starting from any compact strictly $k$-convex immersion we refer to \cite{Lang14}[Section 3.5]).

The results of this section follow \cite{And94_euclid}. We first observe that $\min_{M_t} G_\rho$ is bounded from below by its value at $t = 0$ and cannot remain bounded from above indefinitely, hence $T < \infty$. Note that this estimate yields a positive lower bound for the sum of the smallest $k$ principal curvatures along the flow, since 
\[\frac{\lambda_1 + \dots + \lambda_k}{\rho} \geq G_\rho\]
holds on $M\times [0,T)$.

\begin{lemma}
\label{lem:speed_lower}
For each $t \in [0,T)$ there holds 
\[\min_{M_t} G_{\rho} \geq \bigg( \Big(\min_{M_0} G_{\rho}\Big)^{-2} - c t \bigg)^{-\frac{1}{2}},\]
where $c = c(n,k,\rho)$. 
\end{lemma}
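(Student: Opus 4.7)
The natural strategy is to apply the parabolic maximum principle to a power of $G_\rho^{-1}$. Since the minimum of $G_\rho$ at time $t$ corresponds to the maximum of $\varphi_t := G_\rho^{-2}$ on $M_t$, I would work with $\varphi := G_\rho^{-2}$ and show that its spatial maximum satisfies
\[
\frac{d}{dt} \max_{M_t} \varphi \;\leq\; -2c
\]
for a positive constant $c = c(n,k,\rho)$, and then integrate.

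\textbf{Setting up the evolution.} Using the evolution equation for $G$ from Section \ref{sec:evol_eqs} together with the chain rule, one computes
\[
(\partial_t - \dot\gamma_\rho^{pq}\nabla_p\nabla_q)\varphi \;=\; -2 G_\rho^{-2}\, \dot\gamma_\rho^{pq} A^2_{pq} \;-\; 6 G_\rho^{-4}\,\dot\gamma_\rho^{pq}\nabla_p G_\rho \nabla_q G_\rho.
\]
Since $\dot\gamma_\rho^{pq}$ is positive-definite, at a maximum of $\varphi$ the spatial second-order term contributes a favourable sign and the gradient term vanishes, so the ODI
\[
\frac{d}{dt}\max_{M_t}\varphi \;\leq\; -2\,G_\rho^{-2}\,\dot\gamma_\rho^{pq}A^2_{pq}
\]
holds at the maximum point (in the barrier/viscosity sense made rigorous by Hamilton's trick).

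\textbf{The reaction bound.} The main task is to show
\[
\dot\gamma_\rho^{pq} A^2_{pq} \;\geq\; c(n,k,\rho)\, G_\rho^2.
\]
Diagonalising $A$ and using the one-homogeneity identity $\dot\gamma_\rho^p \lambda_p = \gamma_\rho(\lambda) = G_\rho$, the Cauchy--Schwarz inequality yields
\[
G_\rho^2 \;=\; \Bigl(\sum_p \sqrt{\dot\gamma_\rho^p}\cdot \sqrt{\dot\gamma_\rho^p}\,\lambda_p\Bigr)^2 \;\leq\; \Bigl(\textstyle\sum_p \dot\gamma_\rho^p\Bigr) \Bigl(\sum_p \dot\gamma_\rho^p \lambda_p^2\Bigr),
\]
so it suffices to obtain a uniform upper bound on $\sum_p \dot\gamma_\rho^p$ throughout $\Gamma$. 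Differentiating the definition of $\gamma_{k,1}$ directly gives
\[
\textstyle\sum_p \dot\gamma_{k,1}^p \;=\; k\,\gamma_{k,1}^2 \sum_{i_1<\dots<i_k} (z_{i_1}+\dots+z_{i_k})^{-2} \;\leq\; k,
\]
using that $\sum x_i^{-2} \leq (\sum x_i^{-1})^2$ for positive $x_i$. Combining this with the identity for $\dot\gamma_\rho^{pq}$ stated before Lemma \ref{lem:first-order_ests} and the zeroth-order bounds of Lemma \ref{lem:zeroth-order_ests} yields $\sum_p \dot\gamma_\rho^p \leq c(n,k,\rho)^{-1}$, completing the reaction estimate.

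\textbf{Integration.} Feeding this back into the ODI gives $\tfrac{d}{dt}\max_{M_t}\varphi \leq -2c$, whence
\[
\Bigl(\min_{M_t} G_\rho\Bigr)^{-2} \;\leq\; \Bigl(\min_{M_0} G_\rho\Bigr)^{-2} - 2ct,
\]
and rearranging gives the claimed estimate. The only real obstacle is the reaction bound; everything else is standard Hamilton-trick bookkeeping. I do not expect any serious trouble, since the uniform bound $\sum_p \dot\gamma_{k,1}^p \leq k$ holds globally on $\Gamma$, so no auxiliary pinching or compactness of direction is needed.
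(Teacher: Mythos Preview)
Your argument is correct and essentially identical to the paper's: both derive the reaction inequality $\dot\gamma_\rho^{pq}A^2_{pq}\geq c\,G_\rho^2$ via Euler's identity and Cauchy--Schwarz together with a global upper bound on $\sum_p\dot\gamma_\rho^p$, and then apply the maximum principle (the paper to $G_\rho$ directly, you to $G_\rho^{-2}$, which is a cosmetic difference). Your explicit computation $\sum_p\dot\gamma_{k,1}^p\leq k$ is a nice touch that the paper simply subsumes into its reference to Lemma~\ref{lem:first-order_ests}.
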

\begin{proof}
By the one-homogeneity of $\gamma_{\rho}$ and the Cauchy-Schwartz inequality we have 
\begin{align*}
G_{\rho}^2 &= (\dot \gamma_{\rho}^{p} \lambda_p)^2 \leq \bigg( \sum_p \dot  \gamma_{\rho}^p \bigg)( \dot \gamma_{\rho}^p \lambda_p^2)\leq C(n,k,\rho) \dot \gamma_{\rho}^{pq} A^2_{pq}.
\end{align*}
Here we have used the fact that the first derivatives of $\gamma_\rho$ are bounded over $\Gamma$ (see Lemma \ref{lem:first-order_ests}). Substituting this into the evolution equation for $G_{\rho}$ gives 
\[(\partial_t-\dot \gamma^{pq}_\rho \nabla_p \nabla_q )  G_{\rho} \geq \frac{1}{C} G_{\rho}^3,\]
so the parabolic maximum principle implies the desired inequality.
\end{proof}

Next we prove a scaling-invariant lower bound for $G_{\rho}$. From this estimate it follows that the ratio
\[\frac{\lambda_1 + \dots + \lambda_k}{H}\]
is bounded from below by a positive constant on $M\times[0,T)$. We say the solution is uniformly $k$-convex. 

\begin{lemma}
\label{lem:unif_parabolic}
For each $t \in [0,T)$ we have the inequality
\[\max_{M_t} \frac{H}{G_\rho} \leq \max_{M_0} \frac{H}{G_\rho}.\]
Moreover, there is a constant $\bar \alpha$ depending only on $n$, $k$ and $M_0$ such that 
\[ \max_{M_t} \frac{H}{G_1} \leq \bar \alpha \]
for each $t \in [0,T)$.
\end{lemma}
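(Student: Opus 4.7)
The plan is to set $Q := H/G_\rho$, derive an evolution equation for $Q$, and conclude via the parabolic maximum principle. The key observation is that when one applies the standard quotient rule to combine the evolutions of $H$ and $G_\rho$ recorded in Section \ref{sec:evol_eqs}, the two contributions $\dot\gamma_\rho^{pq}A^2_{pq} \cdot H/G_\rho$ arising from the reaction terms in $\partial_t H$ and $\partial_t G_\rho$ cancel exactly. What remains is
\[(\partial_t - \dot\gamma_\rho^{pq}\nabla_p\nabla_q) Q = \frac{1}{G_\rho}\ddot\gamma_\rho^{pq,rs}\nabla^i A_{pq}\nabla_i A_{rs} + \frac{2}{G_\rho}\dot\gamma_\rho^{pq}\nabla_p G_\rho\,\nabla_q Q,\]
a gradient term plus a nonpositive contribution, the latter coming from the concavity of $\gamma_\rho$ (viewed as a function on $\sym(\Gamma)$, as discussed in Section \ref{sec:notation}). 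At an interior spatial maximum of $Q$ on the compact hypersurface $M_t$, the gradient term vanishes and the second-order term is nonpositive because $\dot\gamma_\rho^{pq}$ is positive-definite. The parabolic maximum principle therefore yields $\max_{M_t} Q \leq \max_{M_0} Q$, which is the first inequality of the lemma.

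For the second inequality, I would exploit the interpolation identity built into the definition of $\gamma_\rho$. Multiplying the defining relation $\gamma_\rho^{-1} = \rho \gamma_1^{-1} + (1-\rho)\tr^{-1}$ by $H = \tr(\lambda)$ gives the pointwise identity
\[\frac{H}{G_\rho} = \rho \frac{H}{G_1} + (1-\rho),\]
so solving for $H/G_1$ yields $H/G_1 = \rho^{-1}(H/G_\rho - (1-\rho))$. Combining this with the first part of the lemma, applied pointwise at time $t$ and at $t=0$, gives $\max_{M_t} H/G_1 \leq \max_{M_0} H/G_1 =: \bar\alpha$. Since the right-hand side is a purely geometric invariant of the initial hypersurface, $\bar\alpha$ depends only on $n$, $k$, and $M_0$; importantly, the $\rho$-dependence cancels out of the final bound.

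I do not anticipate any real obstacle here. The one nontrivial ingredient is the exact cancellation of the reaction terms in the evolution of $Q$, which is the standard mechanism underlying preserved pinching estimates for ratios of one-homogeneous curvature functions and is precisely the strategy used by Andrews in \cite{And94_euclid}. Beyond the evolution equations of Section \ref{sec:evol_eqs} and the algebraic identity above, neither the concavity of $\gamma_\rho$ (used only to sign the $\ddot\gamma$ term) nor the $k$-convexity assumption enters the argument in an essential way.
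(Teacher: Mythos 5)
Your argument is correct and follows exactly the same route as the paper: derive the evolution of $H/G_\rho$, sign the $\ddot\gamma_\rho$ term via concavity, apply the parabolic maximum principle, and then translate the conserved upper bound on $H/G_\rho$ into one on $H/G_1$ via the affine identity $H/G_\rho = \rho H/G_1 + 1 - \rho$. No substantive differences.
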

\begin{proof}
From the evolution equations for $G_{\rho}$ and $H$ we find that $u:= H/G_\rho$ satisfies 
\begin{align*}
(\partial_t  -\dot \gamma_{\rho}^{pq} \nabla_p \nabla_q ) u  =&  \frac{1}{G_{\rho}} \ddot \gamma_{\rho}^{pq,rs}\nabla^i A_{pq} \nabla_i A_{rs} + \frac{2}{G_{\rho}} \dot \gamma_{\rho}^{pq} \nabla_q G_{\rho} \nabla_p u.
\end{align*}
The first term on the right is nonpositive by the concavity of $\gamma_{k,1}$, so by the parabolic maximum principle we have 
\[\max_{M_t} u \leq \max_{M_0} u\]
for each $t \in [0,T)$. Next observe that since
\[\frac{H}{G_{\rho}} = \rho \frac{H}{G_{1}} + 1 -\rho \]
we have 
\[\max_{M_t} \frac{H}{G_1} \leq \max_{M_0} \frac{H}{G_1},\]
and the right-hand side depends only on $n$, $k$ and $M_0$.
\end{proof}

Recall that for each $\alpha < \infty$ we defined a convex cone $\Gamma_\alpha \Subset \Gamma$ by
\[\Gamma_\alpha := \{z \in \Gamma : \tr(z) \leq \alpha \gamma_1(z) \}.\]
To rephrase Lemma \ref{lem:unif_parabolic}, there is a uniform constant $\bar \alpha$ such that $\lambda \in \Gamma_{\bar \alpha}$ holds on $M\times[0,T)$. Combined with the first inequality of Lemma \ref{lem:improved_deriv_bds}, this implies that the operator $\dot \gamma_{\rho}^{pq} \nabla_p \nabla_q$ appearing in the evolution of the second fundamental form is uniformly elliptic along the flow, with ellipticity constant depending on $\bar \alpha$. 

As a consequence of the a priori estimates proven in this section, it can be shown that the maximal time of smooth existence $T$ is characterised by curvature blow-up:
\[\limsup_{t \to T} \max_{M_t} G_\rho = \infty.\]
This is established by writing the solution locally as a graph and applying the regularity theory for convex fully nonlinear parabolic PDE due to Evans \cite{Ev82} and Krylov \cite{Kryl82}. We refer to Section 4.3 of \cite{Lang14} for the details. Consequently, an argument similar to the proof of Lemma \ref{lem:speed_lower} shows that $T$ can be bounded in terms of $n$, $k$, $\rho$ and $M_0$. 

\section{A cylindrical estimate}
\label{sec:cyl_est}

We saw in Lemma \ref{lem:unif_parabolic} that the quantity $\max_{M_t}  H/ G_\rho$ is uniformly bounded from above for each $t \in [0,T)$. In this section we show that the ratio $H/G_\rho$ becomes optimal at a singularity. For each $\rho >0$ we write
\[ \alpha_{\rho} := \frac{n-k+1}{\gamma_\rho(\underbrace{0,\dots,0}_{k-1 \text{ entries}}, 1, \dots, 1)}.\]
That is, $\alpha_{\rho}$ is the value taken by $H/G_\rho$ on a cylinder $\mathbb{R}^{k-1} \times \partial B_1^{n-k+1}(0)$.

\begin{proposition}
\label{prop:cyl_est}
For each $\varepsilon >0$ there is a constant $C_\varepsilon$ depending only on $\varepsilon$, $n$, $k$, $\rho$ and $M_0$ such that 
\[H \leq (\alpha_{\rho} + \varepsilon)G_\rho + C_\varepsilon\]
on $M\times  [0,T)$. 
\end{proposition}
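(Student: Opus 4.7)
The natural approach is to apply Theorem \ref{thm:Stamp} to the locally Lipschitz function
\[u := \Big(\frac{H}{G_\rho} - \sigma\Big)_+, \qquad \sigma := \alpha_\rho + \varepsilon/2.\]
Lemma \ref{lem:unif_parabolic} bounds $u$ from above by a constant $C_0 = C_0(M_0)$. Once Theorem \ref{thm:Stamp} gives $u \leq C G_\rho^{-\sigma_0}$ for some $\sigma_0 > 0$, the desired estimate follows: where $G_\rho$ is large enough that $CG_\rho^{-\sigma_0} \leq \varepsilon/2$ one gets $H \leq (\alpha_\rho + \varepsilon)G_\rho$, and on the complementary set $G_\rho$ is bounded in terms of $\varepsilon,n,k,\rho,M_0$, so that $H \leq \bar\alpha G_\rho$ can be absorbed into the additive constant $C_\varepsilon$.

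To verify the geometric hypothesis of Theorem \ref{thm:Stamp}, set
\[\Gamma' := \{z \in \Gamma_{\bar\alpha} : H(z) \geq \sigma G_\rho(z)\},\]
a scale-invariant symmetric cone which, by Lemma \ref{lem:unif_parabolic}, contains $\lambda$ on $\supp u$. Since $\Gamma_{\bar\alpha} \Subset \Gamma$, the closure $\overline{\Gamma'}$ avoids every $\Cyl_m$ with $m \geq k$ (which lie in $\partial\Gamma$). For $\Cyl_m \subset \Gamma$ with $0 \leq m \leq k-1$, the identity $H/G_\rho = \rho H/G_1 + (1-\rho)$ reduces the exclusion of $\Cyl_m$ from $\overline{\Gamma'}$ to showing that $H/G_1$ is strictly smaller than $\alpha_1$ on $\Cyl_m$ for $m < k-1$. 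This follows from a direct combinatorial calculation in which one expands $\gamma_{k,1}(z)^{-1}$ as a sum of reciprocals of $k$-tuple sums on each $\Cyl_m$ and checks that $H/G_1$ is strictly increasing in $m$, attaining $\alpha_1$ exactly at $m = k-1$.

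To derive the evolution equation for $u$, we compute using the evolutions of $H$ and $G_\rho$ from Section \ref{sec:evol_eqs}: for $\tilde u := H/G_\rho$,
\[(\partial_t - \dot\gamma_\rho^{pq}\nabla_p\nabla_q)\tilde u = \frac{1}{G_\rho}\ddot\gamma_\rho^{pq,rs}\nabla^i A_{pq}\nabla_i A_{rs} + \frac{2}{G_\rho}\dot\gamma_\rho^{pq}\nabla_p G_\rho\,\nabla_q \tilde u.\]
On $\Gamma'$, $H$ is comparable to $G_\rho$ and the strict off-radial concavity of $\gamma_\rho$, which is uniform on $\Gamma' \Subset \Gamma\setminus\Cyl$ in the spirit of Lemma \ref{lem:improved_deriv_bds}, yields an inequality of the form $\ddot\gamma_\rho^{pq,rs}\nabla^i A_{pq}\nabla_i A_{rs} \leq -c |\nabla A|^2/H + C|\nabla G_\rho|^2/(H G_\rho)$, the correction term accounting for the fact that $\ddot\gamma_\rho$ degenerates in radial directions. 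The transport term is handled by Cauchy--Schwarz in the $\dot\gamma_\rho$-inner product: it splits into a piece of the form $C_1 |\nabla u|^2/u$, as required by Theorem \ref{thm:Stamp}, plus a piece bounded by $C \tilde u |\nabla A|^2 / G_\rho^2 \leq C C_0 |\nabla A|^2/G_\rho^2$, which is absorbed into the good concavity term for a suitable choice of the Cauchy--Schwarz parameter. This verifies the differential inequality \eqref{eq:stamp_evol}, in the weak sense of Remark \ref{rem:weak_pinching}, with $C_3 = C_4 = 0$.

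The main obstacles are twofold: the combinatorial fact that $H/G_1$ attains its maximum over $\Cyl \cap \Gamma$ precisely at $\Cyl_{k-1}$, and the careful bookkeeping required in the concavity/transport estimate to ensure that the radial component of $\nabla A$ (on which $\ddot\gamma_\rho$ vanishes) is exactly the piece absorbed after Cauchy--Schwarz. With these in hand, Theorem \ref{thm:Stamp} applies and the conclusion follows as indicated in the first paragraph.
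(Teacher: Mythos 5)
The skeleton of your argument is the same as the paper's — apply Theorem \ref{thm:Stamp} to essentially $H/G_\rho - (\alpha_\rho + \varepsilon)$, verify that the support of the truncated quantity avoids $\Cyl$, and extract a good $-u|\nabla A|^2/G^2$ term from the concavity of $\gamma_\rho$. The cone-avoidance step is fine and matches the paper's $\Gamma'' \Subset \Gamma\setminus\Cyl$ argument. The gap is in the concavity estimate.

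You claim an inequality of the form
\[\ddot \gamma_\rho^{pq,rs} \nabla^i A_{pq} \nabla_i A_{rs} \leq -c\,\frac{|\nabla A|^2}{H} + C\,\frac{|\nabla G_\rho|^2}{H G_\rho},\]
with the correction term accounting for the radial degeneracy of $\ddot\gamma_\rho$. Two problems. First, the scaling of your correction term is off: the null direction of $\ddot\gamma_\rho$ at $Z$ is $Z$ itself, and a direct computation (project $\nabla_i A$ off $Z$, use strict off-radial concavity, and recombine) produces a correction of the form $C|\nabla H|^2 / H$ or $C|\nabla G_\rho|^2/H$, not $C|\nabla G_\rho|^2/(H G_\rho)$ — the extra factor of $G_\rho^{-1}$ is not there. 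Second, and more seriously, this correction term has the \emph{same} order as the good term: since $|\nabla G_\rho|\leq C'|\nabla A|$ on $\Gamma'$, the correction is bounded by $CC'^{2}|\nabla A|^2/H$ and there is no reason for $c$ to dominate $CC'^2$; the constants are determined only by $n,k,\Gamma'$ and cannot be tuned. You suggest absorbing the bad piece ``for a suitable choice of the Cauchy–Schwarz parameter,'' but the Young parameter only controls the piece coming from the transport term $\frac{2}{G_\rho}\dot\gamma_\rho^{pq}\nabla_p G_\rho\nabla_q u$; it does nothing for the radial correction to the concavity estimate, which sits there with a fixed coefficient. As a result the required inequality \eqref{eq:stamp_evol} does not follow from what you have written.

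The missing ingredient is exactly Lemma \ref{lem:good_grad}. Because $\nabla A$ is \emph{totally symmetric} (Codazzi), the radial components $\tr(\nabla_i A)/\tr(Z)\cdot Z$ cannot dominate: the symmetrisation trick in the proof of Lemma \ref{lem:good_grad} shows that the off-radial part already controls the full tensor, yielding
\[\sum_i \ddot \gamma_\rho^{pq,rs} \nabla_i A_{pq}\nabla_i A_{rs} \leq -\frac{\rho}{C}\,\frac{|\nabla A|^2}{H}\]
with \emph{no} correction term at all. This is the Brendle--Huisken/Andrews observation, and it is essential: without it the good gradient term only sees $|\nabla A - \text{radial}|^2$, and the radial part cannot be recovered by Young's inequality against the transport term. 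Once you invoke Lemma \ref{lem:good_grad}, your outline closes exactly as in the paper's proof, with the transport term handled by Young's inequality with parameter chosen proportional to $u$ (giving both the $C_1|\nabla u|^2/u$ piece and the $su|\nabla A|^2/G_\rho^2$ piece to be absorbed into the Codazzi-strengthened good term).
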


For the two-harmonic mean curvature flow this estimate was established in \cite{Bren-Huisk17}, and the arguments used there carry over to the present setting without major modifications. An adaptation of Proposition 3.6 in \cite{Bren-Huisk17} shows that for $z \in \Gamma$ satisfying
\[\tr(z) \leq \alpha_{\rho} \gamma_\rho(z)\]
there holds
\[\min_{1 \leq i_1 < \dots < i_{k-1} \leq n} z_{i_1} + \dots + z_{i_{k-1}} \geq 0,\]
with equality if and only if 
\[z = (\underbrace{0,\dots,0}_{k-1 \text{ entries}}, 1, \dots, 1).\]
Hence Proposition \ref{prop:cyl_est} implies that, at a singularity, the second fundamental form of $M_t$ either becomes strictly $(k-1)$-positive or approaches (up to rescaling) the second fundamental form of a cylinder $\mathbb{R}^{k-1} \times \partial B_1^{n-k+1}(0)$. This kind of estimate is usually referred to as a cylindrical estimate. 

Huisken and Sinestrari established the first cylindrical estimate in their work on two-convex solutions of mean curvature flow \cite{Huisk-Sin09}. Cylindrical estimates for a large class of flows by concave admissible speeds were established by Langford and the author in \cite{Lang-Lyn19}, and in fact Proposition \ref{prop:cyl_est} can be derived as a corollary of Theorem 1.1 in that paper. 

Let us show how Proposition \ref{prop:cyl_est} can be deduced from the general pinching estimate Theorem \ref{thm:Stamp}. The key observation (which will also play a role in the proof of the convexity estimate) is that, because of the concavity of the speed and the Codazzi equations, the gradient term appearing in the evolution equation for $H$ controls the full gradient of $A$. This was established by Andrews in \cite{And94_euclid}[Lemma 7.12] and later proven by a different argument in \cite{Bren-Huisk17}[Lemma 3.2]. We adapt the latter proof to establish:

\begin{lemma}
\label{lem:good_grad}
Let $\Gamma'$ be a symmetric cone satisfying $\Gamma' \Subset \Gamma$ and suppose $Z \in \sym(\Gamma')$. Then there is a constant $C = C(n,k,\Gamma')$ such that 
\[\sum_i \ddot \gamma_\rho^{pq,rs}(Z) T_{ipq} T_{irs} \leq - \frac{\rho}{C} \frac{|T|^2}{\tr(Z)}\]
for every totally symmetric $T$.
\end{lemma}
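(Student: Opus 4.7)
The plan is to reduce to the case $\rho = 1$ using Lemma \ref{lem:second-order_ests}, and then to combine the strict off-radial concavity of $\gamma_1$ with the total symmetry of $T$ via a compactness argument.

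The first step applies the first inequality in Lemma \ref{lem:second-order_ests} slice-by-slice, with $B_{pq} = T_{ipq}$, and sums over $i$ to give
\[\sum_i \ddot\gamma_\rho^{pq,rs}(Z) T_{ipq} T_{irs} \leq \rho \sum_i \ddot\gamma_1^{pq,rs}(Z) T_{ipq} T_{irs}.\]
It then suffices to prove the lemma with $\gamma_1$ in place of $\gamma_\rho$ and $1/C$ in place of $\rho/C$. Since $\ddot\gamma_1^{pq,rs}$ is continuous on $\Gamma$ and $(-1)$-homogeneous by one-homogeneity of $\gamma_1$, and because $\{Z \in \sym(\Gamma') : \tr(Z) = 1\}$ is compact, it is enough to prove strict negativity of the quadratic form
\[T \mapsto \sum_i \ddot\gamma_1^{pq,rs}(Z) T_{ipq} T_{irs}\]
on nonzero totally symmetric $T$ at each such normalized $Z$. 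The constant $C = C(n,k,\Gamma')$ and the $\tr(Z)^{-1}$ scaling will then follow from compactness and homogeneity.

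To verify pointwise strict negativity, I would diagonalize $Z$ and, by continuity, assume the eigenvalues satisfy $z_1 < \dots < z_n$. The identity for $\ddot\gamma_1^{pq,rs}$ recalled in Section \ref{sec:notation} splits the quadratic form into a diagonal piece $\ddot\gamma_1^{pq}(z) T_{ipp} T_{iqq}$ and an off-diagonal piece $2\sum_{p>q}\frac{\dot\gamma_1^p - \dot\gamma_1^q}{z_p - z_q} T_{ipq}^2$, both nonpositive by concavity. Suppose the total sum vanishes. Strict off-radial concavity of $\gamma_1$ applied slice-by-slice forces the diagonal vector $(T_{i11},\dots,T_{inn})$ to be a multiple $c_i z$ of $z$ for each $i$, while the strict version of \eqref{eq:concave_firstderivs} for $\gamma_1$ forces $T_{ipq} = 0$ whenever $p \neq q$. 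Total symmetry then provides the crucial coupling: for distinct $i,p$, the entry $T_{ppi}$ is both the $i$-th diagonal slot of slice $p$ (giving $T_{ppi} = c_p z_i$) and an off-diagonal slot of slice $i$ (forcing $T_{ppi} = 0$), so $c_p z_i = 0$. Since $\Gamma' \Subset \Gamma$ and $\Gamma$ is the $k$-positive cone, $z$ has sufficiently many strictly positive components (at least $n-k+1$), so cycling through indices forces every $c_i = 0$, and therefore $T \equiv 0$.

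The main obstacle is verifying this last pointwise claim: namely, that total symmetry is strong enough to break simultaneously the radial null direction $z$ of $\ddot\gamma_1^{pq}$ and the residual diagonal freedom left by the vanishing of all off-diagonal slice entries. Once this nondegeneracy is in place, the remaining compactness and rescaling arguments are routine, and one recovers the sharp $\tr(Z)^{-1}$ scaling from $(-1)$-homogeneity of $\ddot\gamma_1$.
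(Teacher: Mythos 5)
Your overall strategy is sound and genuinely different from the paper's, but there is a gap in the pointwise nondegeneracy step.

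On the comparison: the paper proves two separate quantitative estimates and combines them. First it establishes the matrix-level bound
\[\ddot\gamma_1^{pq,rs}(Z)B_{pq}B_{rs}\leq -\frac{c_0}{\tr(Z)}\Big|B-\tfrac{\tr(B)}{\tr(Z)}Z\Big|^2,\]
via compactness applied to traceless unit-norm $B$, and then shows by a purely algebraic manipulation (exploiting the Codazzi symmetry through $T_{ipq}-T_{piq}$) that $|T|^2$ is controlled by $\sum_i\big|T_i-\tfrac{\tr(T_i)}{\tr(Z)}Z\big|^2$ up to a constant coming from positivity of $|Z|^2\delta_{ip}-Z_{ip}^2$. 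Your approach instead treats $T\mapsto -\sum_i\ddot\gamma_1^{pq,rs}(Z)T_{ipq}T_{irs}$ as a single quadratic form on the finite-dimensional space of totally symmetric tensors and proves positive definiteness by null-space analysis, after which compactness plus $(-1)$-homogeneity gives the uniform $\tr(Z)^{-1}$ bound. This is more unified and conceptually transparent; what it gives up is the clean split between analytic concavity and algebra.

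The gap is the phrase ``by continuity, assume the eigenvalues satisfy $z_1<\dots<z_n$.'' The quantity you are trying to establish is a \emph{strict} inequality at every $Z$ in the compact set $\{Z\in\sym(\Gamma'):\tr(Z)=1\}$, including those with repeated eigenvalues. Strict negativity at the generic (distinct-eigenvalue) points does not pass to limits: if the infimum over the compact domain is attained at a $Z_0$ with multiplicity, the approximating sequence of strictly negative values can still converge to zero, and your argument gives no contradiction. So you must verify nondegeneracy directly at degenerate $Z_0$. The Andrews off-diagonal formula extends to such points (the coefficient $\tfrac{\dot\gamma_1^p-\dot\gamma_1^q}{z_p-z_q}$ has a smooth limit as $z_p\to z_q$), but you then need that this limiting coefficient is \emph{strictly} negative and that the vector Hessian remains strictly negative off-radially; for $\gamma_1=\gamma_{k,1}$ both facts are true (e.g.\ the limit of the off-diagonal coefficient equals $-2\gamma_1^2\sum_{S\ni q,\,S\not\ni p}z_S^{-3}<0$), but they need an explicit computation or a reference, not a continuity reduction. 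With that computation supplied, your coupling argument — forcing the off-diagonal entries of each slice to vanish, reducing the diagonal of each slice to a multiple $c_i z$, and then playing the symmetry $T_{ipp}=T_{pip}$ against the off-diagonal vanishing to get $c_i z_p=0$ for $p\neq i$, with at least $n-k+1\geq 2$ strictly positive $z_p$ killing every $c_i$ — goes through and does prove the pointwise nondegeneracy. (You also have a small index slip: the $i$-th diagonal slot of slice $p$ is $T_{pii}$, not $T_{ppi}$; the conclusion $c_p z_i=0$ is unchanged.)
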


\begin{proof}
By the one-homogeneity of $\gamma_1$ we have 
\[\ddot \gamma_1^{pq,rs}(Z) Z_{pq} Z_{rs} = 0\]
so for any $B \in \sym(\mathbb{R}^n)$ there holds
\begin{align*}
\ddot \gamma_1^{pq,rs}(Z) B_{pq} B_{rs} &= \ddot \gamma_1^{pq,rs}(Z) \bigg(B_{pq} - \frac{\tr(B)}{\tr(Z)}Z_{pq} \bigg)\bigg(B_{rs} - \frac{\tr(B)}{\tr(Z)}Z_{rs} \bigg)\\
& +2\frac{\tr(B)}{\tr(Z)} \ddot \gamma_1^{pq,rs}(Z) B_{pq} Z_{rs}. 
\end{align*}
The last term on the right vanishes since $f(t) := \ddot \gamma_1^{pq,rs} (Z)(tB_{pq} + Z_{pq}) (tB_{rs} + Z_{rs}) $ is nonpositive and vanishes at $t = 0$, giving
\[0 = f'(0) = 2 \ddot \gamma_1^{pq,rs}(Z) B_{pq} Z_{rs}.\]
Using the assumption $\Gamma' \Subset \Gamma$ we deduce that
\[c_0 := \inf\{ -\ddot \gamma_1^{pq,rs} (Y) B_{pq}B_{rs} : Y \in \sym(\Gamma'), \; \tr(Y) = 1, \; \tr(B) = 0, \; |B| =1\}\]
is well defined. Moreover, since $\gamma_1$ is strictly concave in off-radial directions and the conditions $\tr(Y)=1$ and $\tr(B) = 0$ prevent $B$ from being proportional to $Y$, we have $c_0 > 0$. Hence by scaling
\[\ddot \gamma_1^{pq,rs} (Z) B_{pq} B_{rs} \leq - c_0 \frac{|B|^2}{\tr(Z)}\]
for all $Z \in \sym( \Gamma')$ and traceless $B$, and in particular, for every $B \in \sym(\mathbb{R}^n)$ we have
\[\ddot \gamma_1^{pq,rs}(Z) \bigg(B_{pq} - \frac{\tr(B)}{\tr(Z)}Z_{pq} \bigg)\bigg(B_{rs} - \frac{\tr(Z)}{\tr(B)}Z_{rs} \bigg) \leq - \frac{c_0}{\tr(Z)} \bigg|B - \frac{\tr(B)}{\tr(Z)} Z\bigg|^2.\]
Collecting these facts we obtain
\begin{equation}
\label{eq:good_grad_1}
\ddot \gamma_1^{pq,rs}(Z) B_{pq} B_{rs} \leq - \frac{c_0}{\tr(Z)} \bigg|B - \frac{\tr(B)}{\tr(Z)} Z\bigg|^2.
\end{equation}

Next observe that
\begin{align*}
 4\sum_{i,p,q} \bigg(T_{ipq}- \frac{\tr(T_i)}{\tr(Z)} Z_{pq} \bigg)^2 \geq \sum_{i,p,q} \bigg(T_{ipq}- \frac{\tr(T_i)}{\tr(Z)} Z_{pq} -T_{piq} + \frac{\tr(T_p)}{\tr(Z)} Z_{iq} \bigg)^2,
\end{align*}
so for $T$ totally symmetric,
\begin{align*}
 4\sum_{i,p,q} \bigg(T_{ipq}- \frac{\tr(T_i)}{\tr(Z)} Z_{pq} \bigg)^2 &\geq \sum_{i,p,q} \bigg(- \frac{\tr(T_i)}{\tr(Z)} Z_{pq} + \frac{\tr(T_p)}{\tr(Z)} Z_{iq} \bigg)^2\\
 &=\frac{2}{\tr(Z)^2} \sum_{i,p} (|Z|^2 \delta_{ip} - Z_{ip}^2)  \tr(T_i)\tr(T_p). 
\end{align*}
Let us define 
\[c_1 := \inf \{ (|Y|^2 \delta_{ip} - Y_{ip}^2 )\xi_i \xi_p: Y \in \sym(\Gamma'), \; \tr(Y) = 1, \; \xi \in \mathbb{R}^{n}, \; |\xi|=1\}\}\]
and observe that since $|Y|^2 \delta_{pq} - Y_{pq}^2$ is positive for every $Y \in \sym(\Gamma)$ (which we recall is the $k$-positive cone for some $k \leq n-1$) there holds $c_1 > 0$. Since $Z \in \sym(\Gamma')$ we obtain 
\begin{align*}
 2\sum_{i,p,q} \bigg(T_{ipq}- \frac{\tr(T_i)}{\tr(Z)} Z_{pq} \bigg)^2 \geq c_1 \sum_i \tr(T_i)^2. 
\end{align*}
On the other hand,
\begin{align*}
|T|^2 & = \sum_{i,p,q} \bigg(T_{ipq}- \frac{\tr(T_i)}{\tr(Z)} Z_{pq}  + \frac{\tr(T_i)}{\tr(Z)} Z_{pq}\bigg)^2\\
 &\leq 2\sum_{i,p,q} \bigg(T_{ipq}- \frac{\tr(T_i)}{\tr(Z)} Z_{pq} \bigg)^2 + 2 \frac{|Z|^2}{\tr(Z)^2} \sum_i \tr(T_i)^2,
\end{align*}
so by setting 
\[C_0 := \max\{|Y|^2 : Y \in \sym (\Gamma'), \; \tr(Y) = 1\},\]
we obtain
\begin{align*}
|T|^2 & \leq 2(1+ 2c_1^{-1} C_0)\sum_{i,p,q} \bigg(T_{ipq}- \frac{\tr(T_i)}{\tr(Z)} Z_{pq} \bigg)^2.
\end{align*}
Combining this with \eqref{eq:good_grad_1} gives 
\[\sum_i \ddot \gamma_1^{pq,rs}(Z) T_{ipq} T_{irs} \leq - \frac{1}{C} \frac{|T|^2}{\tr(Z)}\]
with $C = C(n,k,\Gamma')$. Appealing to Lemma \ref{lem:second-order_ests} we obtain
\[\sum_i \ddot \gamma_\rho^{pq,rs}(Z) T_{ipq} T_{irs} \leq - \frac{\rho}{C} \frac{|T|^2}{\tr(Z)}\]
for each $\rho \in (0,1]$.
\end{proof}

For each $\varepsilon >0$ we define a smooth function on $M\times[0,T)$ by
\[f_\varepsilon(x,t) := \frac{H(x,t) - (\alpha_{\rho} + \varepsilon) G_\rho(x,t)}{G_\rho(x,t)}.\]
With the previous lemma in hand we verify that $f_\varepsilon$ satisfies the hypotheses of Theorem \ref{thm:Stamp}, and so establish the cylindrical estimate.

\begin{proof}[Proof of Proposition \ref{prop:cyl_est}]
For each $\varepsilon >0$ we compute
\begin{align*}
(\partial_t - \dot \gamma_{\rho}^{pq} \nabla_p \nabla_q) f_\varepsilon &=  \frac{1}{G_\rho} \ddot \gamma_\rho^{pq,rs}\nabla^i A_{pq} \nabla_i A_{rs} + \frac{2}{G_\rho} \dot \gamma_\rho^{pq} \nabla_p G_\rho \nabla_q f_\varepsilon.
\end{align*}
By Lemma \ref{lem:unif_parabolic} we know that 
\[A \in \sym(\Gamma_{\bar \alpha})\]
holds on $M\times [0,T)$. Furthermore, by the Codazzi equations $\nabla A$ is totally symmetric, so we can apply the previous lemma with $\Gamma' = \Gamma_{\bar \alpha}$ to obtain a positive $C_0$ such that
\[\ddot \gamma_\rho^{pq,rs}\nabla^i A_{pq} \nabla_i A_{rs} \leq - \frac{\rho}{C_0} \frac{|\nabla A|^2}{H}\]
on $M\times [0,T)$. The cone $\Gamma_{\bar \alpha}$ is determined by $M_0$ via $\bar \alpha$, so $C_0 = C_0(n,k, M_0)$. We thus have 
\begin{align*}
(\partial_t - \dot \gamma_{\rho}^{pq} \nabla_p \nabla_q) f_\varepsilon &\leq  -\frac{\rho}{C_0} \frac{|\nabla A|^2}{G_\rho H}  + \frac{2}{G_\rho} \dot \gamma_\rho^{pq} \nabla_p G_\rho \nabla_q f_\varepsilon,
\end{align*}
and inserting the bounds (see Lemma \ref{lem:unif_parabolic} and Lemma \ref{lem:zeroth-order_ests})
\[f_\varepsilon \leq \bar \alpha, \qquad G_\rho \leq H,\]
we obtain
\begin{align*}
(\partial_t - \dot \gamma_{\rho}^{pq} \nabla_p \nabla_q) f_\varepsilon &\leq  -\frac{\rho}{\bar \alpha C_0} f_\varepsilon\frac{|\nabla A|^2}{H^2}  + \frac{2}{G_\rho} \dot \gamma_\rho^{pq} \nabla_p G_\rho \nabla_q f_\varepsilon
\end{align*}
on $M\times [0,T)$. 

Using $A \in \Gamma_{\bar \alpha}$ and Lemma \ref{lem:first-order_ests} there is a $C = C(n,k,M_0)$ such that 
\[\frac{2}{G_\rho} \dot \gamma_\rho^{pq} \nabla_p G_\rho \nabla_q f_\varepsilon \leq 2C \frac{|\nabla A|}{G_\rho} |\nabla f_\varepsilon|\]
on $M\times [0,T)$, so by Young's inequality we have 
\[\frac{2}{G_\rho} \dot \gamma_\rho^{pq} \nabla_p G_\rho \nabla_q f_\varepsilon \leq \frac{\rho}{2 \bar \alpha C_0} f_\varepsilon \frac{|\nabla A|^2}{G_\rho^2} + C_1(n,k,\rho, M_0)  \frac{|\nabla f_\varepsilon|^2}{f_\varepsilon}\]
on $\supp(f_\varepsilon)$. Consequently, at each point in $\supp(f_\varepsilon)$,
\begin{align*}
(\partial_t - \dot \gamma_{\rho}^{pq} \nabla_p \nabla_q) f_\varepsilon &\leq  -\frac{\rho}{2\bar \alpha C_0} f_\varepsilon\frac{|\nabla A|^2}{G_\rho^2}  + C_1  \frac{|\nabla f_\varepsilon|^2}{f_\varepsilon}.
\end{align*}
This inequality is in the form of \eqref{eq:stamp_evol}.

It remains to check that on $\supp(f_\varepsilon)$ the second fundamental form of the solution never coincides with that of a cylinder. Observe that, by the definition of $f_\varepsilon$ and Lemma \ref{lem:unif_parabolic}, on $\supp(f_\varepsilon)$ we have
\[\lambda \in \Gamma'' := \{z \in \Gamma : (\alpha_\rho + \varepsilon) \gamma_\rho(z) \leq \tr(z) \leq \bar \alpha \gamma_\rho(z)\}.\]
In the notation of Theorem \ref{thm:Stamp}, for each $m \leq k-1$ we have
\[\Cyl_m \subset \{z \in \Gamma :   \tr(z)\leq \alpha_\rho  \gamma_\rho(z)\},\]
whereas for $m \geq k$ there holds
\[\Cyl_m \subset \mathbb{R}^n \setminus \Gamma.\]
Putting these two facts together yields 
\[\Gamma'' \Subset \Gamma \setminus \Cyl\]

We may therefore invoke Theorem \ref{thm:Stamp} and apply Young's inequality to conclude that, for each $\varepsilon >0$, there is a positive $C_\varepsilon$ depending on $\varepsilon$, $n$, $k$, $\rho$ and $M_0$ such that
\[f_\varepsilon \leq \varepsilon + C_\varepsilon G_\rho\]
on $M\times [0,T)$. Note that while the constants coming from Theorem \ref{thm:Stamp} depend on the maximal time $T$, this quantity is controlled in terms of $n$, $k$, $\rho$ and $M_0$.
Rearranging gives 
\[H \leq (\alpha_\rho + 2\varepsilon) G_\rho + C_\varepsilon,\]
and since $\varepsilon $ can be made arbitrarily small this proves the claim.
\end{proof}

In addition to Proposition \ref{prop:cyl_est} we make use of the following direct corollary. Notice that the parameter $\rho$ appears only in the lower-order term $C_\varepsilon$. That is, regardless of the value of $\rho$, the principal curvatures of $M_t$ enter the cone $\Gamma_{\alpha_1}$ at a singularity. 

\begin{corollary}
\label{cor:cyl_est}
For every $\varepsilon >0$ there is a constant $C_\varepsilon$ depending only on $\varepsilon$, $n$, $k$, $\rho$ and $M_0$ such that 
\[H \leq (\alpha_1 + \varepsilon) G_1 + C_\varepsilon\]
on $M\times[0,T)$.
\end{corollary}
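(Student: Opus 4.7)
The plan is to deduce this directly from Proposition \ref{prop:cyl_est} by algebraic manipulation, using the identity $H/G_\rho = \rho H/G_1 + (1-\rho)$ that was already exploited in the proof of Lemma \ref{lem:unif_parabolic}. No new PDE analysis is needed.

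First I would observe the following relation between the cylindrical constants: $\alpha_\rho = \rho\alpha_1 + (1-\rho)$. To see this, note that on the cylinder point $z = (\underbrace{0,\dots,0}_{k-1},1,\dots,1) \in \mathbb{R}^n$ one has $\tr(z) = n-k+1$, while the defining identity $\gamma_\rho^{-1} = \rho\gamma_1^{-1} + (1-\rho)\tr^{-1}$ gives
\[\frac{n-k+1}{\gamma_\rho(z)} \;=\; \rho\,\frac{n-k+1}{\gamma_1(z)} + (1-\rho),\]
i.e.\ $\alpha_\rho = \rho\alpha_1 + (1-\rho)$.

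Next, given $\varepsilon>0$, I would apply Proposition \ref{prop:cyl_est} with $\rho\varepsilon$ in place of $\varepsilon$ to obtain a constant $C=C(\varepsilon,n,k,\rho,M_0)$ with
\[H \;\leq\; (\alpha_\rho + \rho\varepsilon)\,G_\rho + C \qquad \text{on } M\times[0,T).\]
Dividing by $G_\rho$ and using the identity $H/G_\rho = \rho H/G_1 + (1-\rho)$, this becomes
\[\rho\,\frac{H}{G_1} + (1-\rho) \;\leq\; \alpha_\rho + \rho\varepsilon + \frac{C}{G_\rho}.\]
Subtracting $(1-\rho)$, invoking $\alpha_\rho - (1-\rho) = \rho\alpha_1$, and dividing by $\rho$ gives
\[\frac{H}{G_1} \;\leq\; \alpha_1 + \varepsilon + \frac{C}{\rho\,G_\rho}.\]
Multiplying back by $G_1$ and using $G_1 \leq G_\rho$ from Lemma \ref{lem:zeroth-order_ests} yields
\[H \;\leq\; (\alpha_1 + \varepsilon)\,G_1 + \frac{C}{\rho}\]
on $M\times[0,T)$. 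Setting $C_\varepsilon := C/\rho$, which depends only on $\varepsilon$, $n$, $k$, $\rho$ and $M_0$, completes the proof.

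There is no real obstacle here: the only substantive input is the cylindrical estimate of Proposition \ref{prop:cyl_est}, and the rest is algebra built on the weighted-harmonic-mean structure of $\gamma_\rho$. The noteworthy feature, as emphasised in the statement, is that the leading constant $\alpha_1$ on the right-hand side is independent of $\rho$; the $\rho$-dependence is absorbed entirely into the additive term $C_\varepsilon$.
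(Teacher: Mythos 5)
Your proposal is correct and follows essentially the same route as the paper: substitute the weighted-harmonic-mean identities $H/G_\rho = \rho H/G_1 + (1-\rho)$ and $\alpha_\rho = \rho\alpha_1 + (1-\rho)$ into Proposition \ref{prop:cyl_est}, then use $G_1 \leq G_\rho$ to absorb the error into the additive constant. Your only cosmetic variation is applying Proposition \ref{prop:cyl_est} with $\rho\varepsilon$ in place of $\varepsilon$ up front, whereas the paper carries $\rho^{-1}\varepsilon$ along and remarks at the end that $\varepsilon$ was arbitrary.
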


\begin{proof}
Substituting the identities
\[\frac{H}{G_\rho} = \rho \frac{H}{G_1} + 1-\rho, \qquad \alpha_{\rho} = \rho \alpha_1 + 1-\rho.\]
into the cylindrical estimate yields
\[\rho \frac{H}{G_1} \leq \rho \alpha_1 + \varepsilon + \frac{C_\varepsilon}{G_\rho},\]
or equivalently
\[H \leq (\alpha_1 + \rho^{-1} \varepsilon) G_1 + \rho^{-1} C_\varepsilon \frac{G_1}{G_\rho}.\]
We know that $G_1 \leq G_\rho$ by Lemma \ref{lem:zeroth-order_ests}, and $\varepsilon$ can be made arbitrarily small, so this estimate has the desired form. 
\end{proof}

\section{The smallest eigenvalue of $A$}
\label{sec:smallest_eig}

In this section we begin analysing the smallest eigenvalue of the second fundamental form of $M_t$. Although $\lambda_1$ is locally Lipschitz in both space in time, it may not be smooth at points of multiplicity. Due to this lack of regularity we interpret 
\[(\partial_t - \dot \gamma^{pq} \nabla_p \nabla_q )\lambda_1\]
in the barrier sense (following \cite{ALM14}, \cite{Brendle15}, \cite{Lang17}). 

\begin{definition}
Let $f:M\times [0,T) \to \mathbb{R}$ be locally Lipschitz continuous. Fix a point $(x_0,t_0) \in M \times (0,T)$. We say that a function $\varphi$ is a lower support for $f$ at $(x_0,t_0)$ if $\varphi$ is of class $C^2$ on the set $B_{g(t_0)}(x_0,r) \times [-r^2 +t_0,t_0]$ for some $r >0$, and there holds
\[f(x,t) \geq \varphi(x,t),\]
with equality at $(x_0,t_0)$. If the inequality is reversed then $\varphi$ is an upper support for $f$ at $(x_0,t_0)$. 
\end{definition}

Given a point $(x_0,t_0) \in M\times [0,T)$, let us say that $\{e_i\}_{i=1}^n \subset T_{x_0} M_{t_0}$ is a principal frame if $A(x_0,t_0)(e_i,e_i) = \lambda_i$ for each $1 \leq i \leq n$. Starting with the evolution equation for the second fundamental form a simple computation shows that, if $\varphi$ is a lower support for $\lambda_1$ at $(x_0,t_0)$, then in a principal frame at $(x_0,t_0)$ there holds 
\begin{align*}
(\partial_t - \dot \gamma_\rho^{pq} \nabla_p \nabla_q) \varphi &\geq \dot \gamma_\rho^{pq}  A^2_{pq} \varphi  + \ddot \gamma_\rho^{pq,rs} \nabla_1 A_{pq} \nabla_1 A_{rs}.
\end{align*}
Since $G_\rho$ is concave in the second fundamental form the gradient term has an unfavourable sign for controlling $\lambda_1$ from below using the maximum principle. However, it turns out that a stronger inequality is true. By computing much more carefully and fully exploiting the concavity of $\lambda_1$ as a function of $A$, Andrews could glean from the diffusion term an extra favourable gradient term \cite{And07} (see also \cite{Lang17}):

\begin{proposition}
\label{prop:lambda_1_evol}
Fix a point $(x_0,t_0) \in M\times (0,T)$ and let $\varphi$ be a lower support for $\lambda_1$ at $(x_0,t_0)$. Then in a principal frame at $(x_0,t_0)$ there holds 
\begin{align}
\label{eq:lambda_1_evol}
(\partial_t - \dot \gamma_\rho^{pq} \nabla_p \nabla_q) \varphi \geq& \dot \gamma_\rho^{pq}  A^2_{pq} \varphi  + \ddot \gamma_\rho^{pq,rs} \nabla_1 A_{pq} \nabla_1 A_{rs}\notag\\
&+ 2\sum_{\lambda_i > \lambda_1 }  \frac{1}{\lambda_i - \lambda_1} \dot \gamma_\rho^{pq}  \nabla_p A_{i1} \nabla_q A_{i1}.
\end{align}
\end{proposition}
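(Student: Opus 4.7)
The strategy is to treat $\lambda_1$ as a (smooth) symmetric function on the open subset of $\sym(\mathbb{R}^n)$ where the smallest eigenvalue is simple, apply the chain rule using the formula for its Hessian recorded in Section \ref{sec:notation}, and combine this with the Simons-type evolution of $A$ from Section \ref{sec:evol_eqs}. The resulting equality for $\lambda_1$ then transfers to the lower support $\varphi$ by the maximum principle applied at the interior minimum of $\lambda_1-\varphi$.

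Assume first that $\lambda_1(x_0,t_0)<\lambda_2(x_0,t_0)$, so that $\lambda_1$ is smooth as a symmetric function in a neighbourhood of $A(x_0,t_0)$. Since $\dot\lambda_1^{p}(z)=\delta_{p1}$ and $\ddot\lambda_1^{pq}(z)=0$ on eigenvalues, the second-derivative formula for symmetric functions from Section \ref{sec:notation} gives, after polarisation,
\[
\ddot\lambda_1^{pq,rs}(Z)\, B_{pq} C_{rs} = -2\sum_{j>1}\frac{B_{1j}C_{1j}}{\lambda_j-\lambda_1}
\]
at every diagonal $Z=\diag(\lambda_1,\ldots,\lambda_n)$ with $\lambda_1<\lambda_2$. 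Applying the chain rule along the field $A$ in a principal frame at $(x_0,t_0)$ yields $\partial_t\lambda_1=(\partial_t A)_{11}$ and
\[
\nabla_p\nabla_q\lambda_1 = (\nabla_p\nabla_q A)_{11} - 2\sum_{j>1}\frac{\nabla_p A_{1j}\,\nabla_q A_{1j}}{\lambda_j-\lambda_1}.
\]
Substituting the Simons-type evolution equation
\[
(\partial_t - \dot\gamma_\rho^{pq}\nabla_p\nabla_q)A_{11} = \dot\gamma_\rho^{pq}A^2_{pq}\lambda_1 + \ddot\gamma_\rho^{pq,rs}\nabla_1 A_{pq}\nabla_1 A_{rs}
\]
then yields \eqref{eq:lambda_1_evol} as an equality for $\lambda_1$. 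Since $\lambda_1-\varphi\geq 0$ has an interior minimum at $(x_0,t_0)$ and $\lambda_1$ is smooth there, $\partial_t(\lambda_1-\varphi)=0$ and the spatial Hessian of $\lambda_1-\varphi$ at $(x_0,t_0)$ is positive semidefinite; positive-definiteness of $\dot\gamma_\rho^{pq}$ (from the increasing monotonicity of $\gamma_\rho$) then gives $(\partial_t-\dot\gamma_\rho^{pq}\nabla_p\nabla_q)\varphi\geq(\partial_t-\dot\gamma_\rho^{pq}\nabla_p\nabla_q)\lambda_1$ at $(x_0,t_0)$, which is \eqref{eq:lambda_1_evol}.

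The main obstacle is the multiplicity case, where $\lambda_1$ is not smooth and the denominators $\lambda_j-\lambda_1$ above are undefined. I would handle this by comparing $\varphi$ against the smooth upper support $f_V(x,t)=A(V,V)/g(V,V)$ associated with an arbitrary smooth vector field $V$ satisfying $V(x_0,t_0)=e_1$, where $e_1$ is a unit eigenvector for $\lambda_1(x_0,t_0)$. The Rayleigh quotient interpretation gives $f_V\geq\lambda_1$ pointwise with equality at $(x_0,t_0)$, so $f_V-\varphi$ has an interior minimum at $(x_0,t_0)$ and the maximum principle again yields $(\partial_t-\dot\gamma_\rho^{pq}\nabla_p\nabla_q)\varphi\geq(\partial_t-\dot\gamma_\rho^{pq}\nabla_p\nabla_q)f_V$. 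A direct calculation in a principal frame, using $|V|\equiv 1$ to eliminate the second-derivative terms of $V$, shows that the right-hand side at $(x_0,t_0)$ is a quadratic-plus-linear expression in the first-order derivatives $(\nabla_p V)^j$ for $j$ with $\lambda_j>\lambda_1$—the components of $\nabla V$ in the eigenspace of $\lambda_1$ can be arranged to vanish by choosing the frame within that eigenspace appropriately—and the extremal choice $(\nabla_p V)^j=-\nabla_p A_{1j}/(\lambda_j-\lambda_1)$ reproduces exactly the extra gradient term appearing on the right of \eqref{eq:lambda_1_evol}, with the sum automatically restricted to indices $i$ with $\lambda_i>\lambda_1$.
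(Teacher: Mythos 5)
The paper gives no explicit proof of this proposition, deferring instead to the cited works of Andrews and Langford. Your strategy --- comparing $\varphi$ against the Rayleigh quotient $f_V = A(V,V)/|V|^2$ for a suitably extended unit vector field $V$ with $V(x_0,t_0)=e_1$, computing the evolution of $f_V$ at $(x_0,t_0)$ using $|V|\equiv 1$ to eliminate the second derivatives of $V$, and optimizing the resulting quadratic-plus-linear expression over the free components of $\nabla V$ --- is exactly the argument of those references, and the calculation you outline is correct: the favourable term $2\sum_{\lambda_i>\lambda_1}(\lambda_i-\lambda_1)^{-1}\dot\gamma_\rho^{pq}\nabla_p A_{i1}\nabla_q A_{i1}$ indeed emerges from the extremal choice $(\nabla_p V)^j = -\nabla_p A_{1j}/(\lambda_j-\lambda_1)$ for $\lambda_j>\lambda_1$, and the simple-eigenvalue warm-up via the explicit Hessian of $\lambda_1$ is consistent with it.

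Two small wordings should be corrected. First, a lower support is by definition of class $C^2$ on $B_{g(t_0)}(x_0,r)\times[t_0-r^2,t_0]$, so $(x_0,t_0)$ lies on the right temporal boundary and the minimum of $\lambda_1-\varphi$ there only gives $\partial_t(\lambda_1-\varphi)\le 0$, not equality --- but this is precisely the direction needed. Second, the vanishing of the components $(\nabla_p V)^j$ for $j$ in the $\lambda_1$-eigenspace is achieved by choosing the \emph{extension} of the vector field $V$ (one is free to prescribe $\nabla V$ at a point subject only to $\langle\nabla_p V,V\rangle=0$), not by ``choosing the frame within that eigenspace'': a frame rotation at a point cannot by itself annihilate first-derivative data. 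With that phrasing fixed, the sketch matches the standard proof.
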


\begin{remark}
\label{rem:inverse-concavity}
The same inequality holds for any admissible speed $\gamma$. When $M_t$ is convex the sum of the two gradient terms on the right-hand side is closely related to the Hessian of the function
\[\gamma_*(\lambda) := \gamma(\lambda_1^{-1}, \dots, \lambda_n^{-1})^{-1}.\]
In fact Andrews showed that if $\gamma$ is concave and $\gamma_*$ is concave on the positive cone (in which case $\gamma$ is said to be inverse-concave), then the flow with speed $\gamma$ preserves positive lower bounds on $\lambda_1$ and $\lambda_1 / H$. On the other hand, flows by speeds which are not inverse-concave will not, in general, preserve convexity \cite{AMY}.

It is not clear whether there is such an elegant characterisation of the gradient terms in \eqref{eq:lambda_1_evol} for non-convex solutions. Despite this, our convexity estimate makes essential use of the extra good term produced by Andrews' computation.  
\end{remark}

We are going to analyse the gradient terms in \eqref{eq:lambda_1_evol}. This is facilitated by the following elementary lemma. 

\begin{lemma}
\label{lem:lin_alg}
Fix $(x_0,t_0) \in M\times(0,T)$ and suppose $\lambda_1$ admits a lower support $\varphi$ at $(x_0,t_0)$. Then if $e_1$ and $e_2$ are two orthonormal vectors in $T_{x_0} M_{t_0}$ satisfying
\[A(x_0,t_0)(e_1, e_1) = A(x_0,t_0) (e_2, e_2) = \lambda_1(x_0,t_0),\]
there holds $\nabla A(e_1, e_2) = 0$ at $(x_0,t_0)$. 
\end{lemma}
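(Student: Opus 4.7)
The plan is to exploit the lower support $\varphi$ by squeezing $\lambda_1$ between $\varphi$ and an upper support built from the smaller eigenvalue of the $2\times 2$ block of $A$ in a frame containing $e_1,e_2$. Concretely, fix a tangent vector $v \in T_{x_0}M_{t_0}$ and a curve $\gamma$ in $M$ (at time $t_0$) with $\gamma(0) = x_0$ and $\gamma'(0) = v$. I would parallel-transport $e_1,e_2$ along $\gamma$ with respect to $g(t_0)$ to obtain vector fields $E_1(s), E_2(s)$ satisfying $\nabla_{\gamma'}E_i \equiv 0$, and define
\[a(s) := A(E_1,E_1), \qquad c(s) := A(E_2,E_2), \qquad b(s) := A(E_1,E_2),\]
all evaluated at $(\gamma(s), t_0)$. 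The smaller eigenvalue of the resulting $2\times 2$ block is
\[\mu(s) := \tfrac{1}{2}(a(s) + c(s)) - \tfrac{1}{2}\sqrt{(a(s)-c(s))^2 + 4b(s)^2},\]
and since $\mu(s)$ is the infimum of a Rayleigh quotient of $A$ over a two-plane in $T_{\gamma(s)}M$, it satisfies $\mu(s) \geq \lambda_1(\gamma(s),t_0)$.

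A preliminary observation is that $b(0) = 0$: the restriction of $A(x_0,t_0)$ to $\mathrm{span}(e_1,e_2)$ is the symmetric matrix with diagonal entries $\lambda_1, \lambda_1$ and off-diagonal entry $b(0)$, whose eigenvalues are $\lambda_1 \pm |b(0)|$, and the global minimality of $\lambda_1$ among unit-vector Rayleigh quotients of $A$ forces $b(0)=0$. Combined with $a(0) = c(0) = \lambda_1$, this gives $\mu(0) = \lambda_1(x_0,t_0) = \varphi(x_0,t_0)$, so the function $s \mapsto \varphi(\gamma(s),t_0) - \mu(s)$ is nonpositive with a zero at $s=0$.

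Setting $P(s) := (a(s)-c(s))^2 + 4b(s)^2$, one checks $P(0) = P'(0) = 0$, so that $\sqrt{P(s)} = |s|\sqrt{P''(0)/2} + o(s)$, which yields one-sided derivatives
\[\mu'_\pm(0) = \tfrac{1}{2}(a'(0) + c'(0)) \mp \tfrac{1}{2}\sqrt{P''(0)/2}.\]
The squeeze $\varphi\circ\gamma \leq \mu$ with equality at $0$ forces $\mu'_-(0) \leq (\varphi\circ\gamma)'(0) \leq \mu'_+(0)$, which together with the explicit formula forces $P''(0) = 0$. Hence $a'(0) = c'(0)$ and, more importantly, $b'(0) = 0$. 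Since parallel transport gives $\nabla_{\gamma'}E_i(x_0) = 0$, the Leibniz rule yields $b'(0) = \nabla_v A(e_1,e_2)$. As $v$ was arbitrary, we conclude $\nabla A(e_1,e_2) = 0$ at $(x_0,t_0)$.

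The main technical obstacle is that $\mu$ fails to be $C^1$ at $s=0$ because of the square root; this is overcome by passing to one-sided derivatives. The symmetry $b(0) = 0$ supplied by the multiplicity assumption is precisely what makes the one-sided derivative formula extractable, so the algebraic input of the lemma is exactly what the calculation needs.
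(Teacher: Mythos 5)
Your proof is correct, but it takes a genuinely different route from the paper's. The paper proceeds by polarization: extending $e_1,e_2$ to a parallel orthonormal frame near $x_0$, it writes
\[
\nabla_k A(e_1,e_2) = \tfrac{1}{4}\, e_k\bigl(A(e_1+e_2,e_1+e_2) - A(e_1-e_2,e_1-e_2)\bigr),
\]
and observes that each of $A(e_1\pm e_2,e_1\pm e_2) - 2\varphi$ is a $C^2$ function which is nonnegative with a zero at $x_0$, so each has the same first derivative as $2\varphi$ there; the two contributions then cancel in the polarization formula. Your argument instead restricts $A$ to the moving two-plane spanned by the parallel-transported $E_1,E_2$, writes down the smaller eigenvalue $\mu(s)$ of the $2\times 2$ block explicitly, and uses the squeeze $\varphi\circ\gamma \le \lambda_1 \le \mu$ together with a one-sided derivative analysis of the square root to force the coefficient $P''(0)$, and hence $b'(0) = \nabla_v A(e_1,e_2)$, to vanish. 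Both arguments rely on the same non-obvious input, namely that the hypothesis $A(e_1,e_1)=A(e_2,e_2)=\lambda_1$ together with global minimality of $\lambda_1$ forces $A(e_1,e_2)(x_0)=0$; you state and justify this explicitly, whereas the paper uses it implicitly when asserting that $A(e_1+e_2,e_1+e_2)\geq c\,\varphi$ is an equality at $x_0$. The polarization route is shorter and avoids the non-smoothness of $\mu$ entirely, while your route gives a slightly more robust picture: it produces $a'(0)=c'(0)$ as a by-product, and generalizes readily to higher eigenvalue multiplicity without having to pick specific polarization directions.

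One small remark: the paper's displayed bound $A(e_1+e_2,e_1+e_2) \ge \sqrt{2}\lambda_1$ should read $\ge 2\lambda_1$, since $|e_1+e_2|^2 = 2$; the error is inconsequential because the constant cancels between the two polarization terms, but your version avoids the issue.
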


\begin{proof}
Extend $e_1$ and $e_2$ to an orthonormal basis $\{e_i\}$ for $T_{x_0} M_{t_0}$, and then to an orthonormal frame in a neighbourhood of $x_0$ in $M_{t_0}$ by parallel transport with respect to the Levi-Civita connection. Then, computing at $x_0$, we have 
\begin{align*}
\nabla_k A(e_1, e_2) &= e_k( A_{12})  = \frac{1}{4} e_k (A(e_1 + e_2, e_1 + e_2) - A(e_1 - e_2, e_1 - e_2)).
\end{align*}
On the other hand, since $|e_1 + e_2| \equiv \sqrt{2}$, there holds 
\[A(e_1 + e_2, e_1+ e_2) \geq \sqrt{2} \lambda_1 \geq \sqrt{2} \varphi,\]
and by assumption, this inequality becomes an equality at $x_0$. From this we conclude that, at $x_0$, 
\[e_k (A(e_1 + e_2, e_1 + e_2) )= \sqrt{2} \nabla_k \varphi,\]
but the same argument shows that 
\[e_k( A(e_1 - e_2, e_1 - e_2) )= \sqrt{2} \nabla_k \varphi\]
also holds at $x_0$. Hence $\nabla_k A(e_1, e_2) = 0$ at $x_0$.    
\end{proof}

Combining the lemma with Proposition \ref{prop:lambda_1_evol}, we obtain the following estimate. By the improved cylindrical estimate Corollary \ref{cor:cyl_est}, if $\rho$ is small relative to $n$ and $k$, the second gradient term in \eqref{eq:lambda_1_grad_terms} is nonnegative at points of high curvature. This observation is the key to establishing our convexity estimate. We need to exploit this gradient term further in Proposition \ref{prop:f_evol_est}, and so delay placing any restrictions on $\rho$ until the proof of Theorem \ref{thm:conv_est_intro}, which can be found in Section \ref{sec:conv_est}.

\begin{lemma}
\label{lem:lambda_1_grad_terms}
Fix $(x_0,t_0) \in M\times (0,T)$ and suppose $\lambda(x_0,t_0) \in \Gamma' \Subset \Gamma.$
Then if $\varphi$ is a lower support for $\lambda_1$ at $(x_0,t_0)$, in a principal frame at $(x_0,t_0)$ we have
\begin{align}
\label{eq:lambda_1_grad_terms}
(\partial_t - \dot \gamma_\rho^{pq} \nabla_p \nabla_q)\varphi \geq \dot \gamma_\rho^{pq} A^2_{pq}\varphi  - C \rho \frac{|\nabla_1 \varphi|^2}{H}   + (C^{-1} - C \rho ) \sum_{p+q> 2} \frac{|\nabla_1 A_{pq}|^2}{H} ,
\end{align} 
where $C = C(n,k,\Gamma')$. 
\end{lemma}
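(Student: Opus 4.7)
\emph{Plan.} My starting point is the barrier evolution inequality of Proposition~\ref{prop:lambda_1_evol}, which has two gradient terms on the right. I would estimate each in turn and then combine using the decomposition $|\nabla_1 A|^2 = |\nabla_1\varphi|^2 + \sum_{p+q>2}|\nabla_1 A_{pq}|^2$.

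For the concavity gradient term $\ddot\gamma_\rho^{pq,rs}\nabla_1 A_{pq}\nabla_1 A_{rs}$, I would apply Lemma~\ref{lem:improved_deriv_bds} directly with $B = \nabla_1 A$: since $\lambda(x_0,t_0) \in \Gamma'$, this immediately produces the lower bound $-C\rho|\nabla_1 A|^2/H$ with $C = C(n,k,\Gamma')$. This is the single place the factor $\rho$ enters on the negative side, and it is the source of the $-C\rho|\nabla_1\varphi|^2/H$ in the claim.

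For Andrews' extra gradient term, I would work in the principal frame, where $\dot\gamma_\rho^{pq}$ is diagonal with eigenvalues bounded below by a positive constant (again by Lemma~\ref{lem:improved_deriv_bds}), and combine this with the crude bound $\lambda_i - \lambda_1 \leq 2|A| \leq CH$ valid on $\Gamma'$. This reduces Andrews' term to a positive multiple of $H^{-1}\sum_{i:\lambda_i > \lambda_1,\,p}|\nabla_p A_{i1}|^2$. Applying the Codazzi identity $\nabla_p A_{i1} = \nabla_1 A_{pi}$, the partial sum here becomes $\sum_{q \notin I_1,\,p}|\nabla_1 A_{pq}|^2$, where $I_1 := \{p : \lambda_p(x_0,t_0) = \lambda_1\}$. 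To convert this into the full sum $\sum_{p+q>2}|\nabla_1 A_{pq}|^2$ appearing in the statement, I would invoke Lemma~\ref{lem:lin_alg}: it forces $\nabla_k A_{pq} = 0$ for any distinct $p,q\in I_1$, while the support property $\varphi \leq A_{pp}$ at $x_0$ (valid for every $p\in I_1$) gives $\nabla_k A_{pp} = \nabla_k\varphi$. Together these identify the missing components $|\nabla_1 A_{pq}|^2$ with $q \in I_1$, $(p,q)\neq(1,1)$ as either zero or as equal to $|\nabla_p\varphi|^2$, and the latter can be absorbed into the good term using the Codazzi identity $|\nabla_p\varphi|^2 = |\nabla_1 A_{1p}|^2$, since $|\nabla_1 A_{1p}|^2$ is itself one of the summands on the right.

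Combining the two gradient bounds through $|\nabla_1 A|^2 = |\nabla_1\varphi|^2 + \sum_{p+q>2}|\nabla_1 A_{pq}|^2$ yields the claimed inequality after enlarging $C$ as needed. The main obstacle is the bookkeeping required to recover the full sum $\sum_{p+q>2}|\nabla_1 A_{pq}|^2$ from the partial sum produced by Andrews' term, particularly at points where $\lambda_1$ has higher multiplicity; here Lemma~\ref{lem:lin_alg} together with the absorption of $|\nabla_p\varphi|^2$ into the good term is essential. The fact that the bad term retains the factor $\rho$ (rather than acquiring an $O(1)$ contribution) depends on the symmetry $|\nabla_1 A_{pq}|^2 = |\nabla_1 A_{qp}|^2$, which supplies a factor of two and ensures the absorption consumes only half of the good term.
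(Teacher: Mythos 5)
Your proposal is correct and follows essentially the same route as the paper: start from Proposition~\ref{prop:lambda_1_evol}, bound $\dot\gamma_\rho^p$ from below and $\lambda_i-\lambda_1$ from above by a multiple of $H$ to extract a positive multiple of $H^{-1}\sum_{\lambda_i>\lambda_1,\,p}|\nabla_p A_{i1}|^2$, then use Codazzi together with Lemma~\ref{lem:lin_alg} and the support property $\nabla_k A_{pp}=\nabla_k\varphi$ for $p\in I_1$ to show this dominates $\frac{1}{2}\sum_{p+q>2}|\nabla_1 A_{pq}|^2/H$, and finally split the $-C\rho|\nabla_1 A|^2/H$ term from Lemma~\ref{lem:improved_deriv_bds} via $|\nabla_1 A|^2=|\nabla_1\varphi|^2+\sum_{p+q>2}|\nabla_1 A_{pq}|^2$. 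The only cosmetic difference is that you bound $\lambda_i-\lambda_1\le 2|A|\le CH$ while the paper uses the slightly cleaner $\lambda_i-\lambda_1<kH$ coming directly from $k$-convexity; both are valid on $\Gamma'\Subset\Gamma$.
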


\begin{proof}
We write $m$ for the dimension of the kernel of $A(x_0,t_0) - \lambda_1(x_0,t_0) g(x_0,t_0)$ so that $\lambda_i > \lambda_1$ if and only if $i\geq m+1$. Since we are working in a basis where $A(x_0,t_0)$ is diagonal, the estimate in Proposition \ref{prop:lambda_1_evol} can be simplified to 
\begin{align*}
(\partial_t - \dot \gamma_\rho^{pq} \nabla_p \nabla_q) \varphi \geq& \dot \gamma_\rho^{pq}  A^2_{pq} \varphi  + \ddot \gamma_\rho^{pq,rs} \nabla_1 A_{pq} \nabla_1 A_{rs}+ 2\sum_{\lambda_i > \lambda_1 }  \dot \gamma_\rho^{p}\frac{ |\nabla_p A_{i1}|^2}{\lambda_i - \lambda_1} .
\end{align*}
Since $\lambda(x_0,t_0) \in \Gamma'$, by Lemma \ref{lem:improved_deriv_bds} we can estimate $\dot \gamma_\rho^p (\lambda(x_0,t_0)) \geq c(n,k,\Gamma')$ and so obtain
\begin{align*}
2\sum_{\lambda_i > \lambda_1 }  \dot \gamma_\rho^{p}\frac{ |\nabla_p A_{i1}|^2}{\lambda_i - \lambda_1} =2\sum_{i\geq m+1 }  \dot \gamma_\rho^{p}\frac{ |\nabla_p A_{i1}|^2}{\lambda_i - \lambda_1}  &\geq 2c \sum_p \sum_{i \geq m+1 } \frac{|\nabla_p A_{i1}|^2}{\lambda_i - \lambda_1} .
\end{align*}
Since $\lambda_1 + \dots + \lambda_k >0$ and $k \leq n-1$ we have $\lambda_n < H$, and consequently
\[\lambda_i - \lambda_1 < \lambda_i +\lambda_2+ \dots + \lambda_k < kH\]
for each $i \geq m+1$. Collecting these inequalities we find that at $(x_0,t_0)$ there holds
\begin{align*}
(\partial_t - \dot \gamma_\rho^{pq} \nabla_p \nabla_q) \varphi \geq& \dot \gamma_\rho^{pq} A^2_{pq} \varphi  + \ddot \gamma_\rho^{pq,rs} \nabla_1 A_{pq} \nabla_1 A_{rs}+ \frac{2c}{k} \sum_p \sum_{i \geq m+1 } \frac{|\nabla_p A_{i1}|^2}{H} .
\end{align*}

By Lemma \ref{lem:lin_alg}, the definition of $m$, and the Codazzi equations, the tensor $\nabla_1 A$ has the following structure at $(x_0,t_0)$:
\begin{align*}
\nabla_1 A = & \nabla_1 A_{11} e^1 \otimes e^1 + \sum_{p \geq m+1} \nabla_1 A_{p1} e^p \otimes e^1 + \sum_{q \geq m+1} \nabla_1 A_{1q} e^1 \otimes e^q\\
&+ \sum_{p,q\geq m+1} \nabla_1 A_{pq} e^p \otimes e^q.
\end{align*}
Using the Codazzi equations again, we find that at $(x_0,t_0)$,
\begin{align*}
\sum_p \sum_{i \geq m+1 } |\nabla_p A_{i1}|^2 &= \sum_p \sum_{i \geq m+1} |\nabla_1 A_{ip}|^2\\
&=\sum_{i \geq m+1} |\nabla_1 A_{i1}|^2 +\sum_{i,p \geq m+1 } |\nabla_1 A_{ip}|^2 \\
&= \frac{1}{2} \sum_{p \geq m+1} |\nabla_1 A_{p1}|^2 + \frac{1}{2} \sum_{q \geq m+1} |\nabla_1 A_{1q}|^2 + \sum_{p,q \geq m+1} |\nabla_1 A_{pq}|^2\\
& \geq \frac{1}{2}\sum_{p+q > 2} |\nabla_1 A_{pq}|^2 .
\end{align*}
Hence at $(x_0,t_0)$ we have
 \begin{align}
 \label{eq:lambda_1_grad_a}
(\partial_t - \dot \gamma_\rho^{pq} \nabla_p \nabla_q) \varphi \geq& \dot \gamma_\rho^{pq}  A^2_{pq} \varphi  + \ddot \gamma_\rho^{pq,rs} \nabla_1 A_{pq} \nabla_1 A_{rs}+ \frac{c}{k}\sum_{p+q > 2} \frac{|\nabla_1 A_{pq}|^2}{H} .
\end{align}

Next we use Lemma \ref{lem:improved_deriv_bds} and the assumption $\lambda(x_0,t_0) \in \Gamma'$ to conclude that at $(x_0,t_0)$,
\begin{align*}
\ddot \gamma_\rho^{pq,rs} \nabla_1 A_{pq} \nabla_1 A_{rs} &\geq - C(n,k,\Gamma')\rho \frac{|\nabla_1 A|^2}{H}.
\end{align*}
Since $\nabla_1 A_{11}(x_0,t_0) = \nabla_1 \varphi(x_0,t_0)$ we can decompose the right-hand side as 
\[\ddot \gamma_\rho^{pq,rs} \nabla_1 A_{pq} \nabla_1 A_{rs} \geq - C\rho \frac{|\nabla_1 \varphi|^2}{H} - C\rho \sum_{p+q > 2} \frac{|\nabla_1 A_{pq}|^2}{H},\]
which gives the result upon substitution into \eqref{eq:lambda_1_grad_a}.
\end{proof}

\subsection{A pinching quantity}
\label{sec:pinching_quantity}

Rearranging the version of the cylindrical estimate from Corollary \ref{cor:cyl_est} we find that 
\[ 0 \leq G_1(x,t) - \frac{1}{\alpha_1 +\varepsilon} H(x,t) + \frac{C_\varepsilon}{\alpha_1+\varepsilon} \]
for each $(x,t) \in M\times [0,T)$. We set the parameter $\varepsilon$ equal to 
\[\varepsilon_0 := 10^{-10}\alpha_1\] 
in this estimate and set 
\[\mu := \frac{1}{2(1+ 10^{-10}) \alpha_1}, \qquad K:= \frac{C_{\varepsilon_0}}{(1+10^{-10})\alpha_1},\]
so that we may write
\[0 \leq G_1(x,t) - 2 \mu H(x,t) + K.\]
Since $G_\rho(x,t) \geq G_1(x,t)$ for every $\rho \in (0,1]$ we conclude that 
\[0 \leq G_\rho(x,t) - 2 \mu H(x,t) + K.\]

We will make use of the function $h(x,t):= G_\rho(x,t) - \mu H(x,t) +K$, which by construction satisfies
\[\mu H(x,t) \leq h(x,t) \leq G_\rho(x,t) + K\]
for every $(x,t) \in M \times[0,T)$. The constant $\mu$ depends only on $n$ and $k$, and
\[K = K(n, k,\rho, M_0).\]
The function $h$ evolves according to 
\[(\partial_t - \dot \gamma_{\rho}^{pq} \nabla_p \nabla_q )h = \dot \gamma_\rho^{pq} A^2_{pq} (h - K) - \mu \ddot \gamma_\rho^{pq,rs}  \nabla^i A_{pq} \nabla_i A_{rs}.\]
We are going to make use of the gradient term on the right to combat the gradient terms appearing in the evolution of $\lambda_1$. Here it will be crucial that the coefficient $\mu$ depends only on $n$ and $k$, since we will have to choose $\rho$ small depending on $\mu$. Otherwise, the choice $\varepsilon_0 = 10^{-10}\alpha_1$ is not special. 

For each $\eta \in(0,1]$, we define
\[f_\eta (x,t) = \frac{-\lambda_1(x,t) - \eta G_\rho(x,t)}{h(x,t)}.\]
The convexity estimate will be established by applying Theorem \ref{thm:Stamp} to these functions. Our immediate goal is to derive an evolution equation for $f_\eta$ and analyse the gradient terms appearing on the right-hand side. To do so, we employ the following elementary lemma.

\begin{lemma}
Let $\Gamma' \subset \{y \in \mathbb{R}^n : \tr(y) >0 \}$ be a symmetric, open, convex cone. Consider a function $\gamma:\Gamma' \to \mathbb{R}$ which is smooth, symmetric, one-homogeneous and concave, and satisfies
\[\gamma(1,\dots,1) >0.\]
Then if
$z \in \Gamma'$ is such that $z_1 \leq \dots \leq z_n$, there holds $\dot \gamma^1(z) \geq 0$. 
\end{lemma}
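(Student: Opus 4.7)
The plan is to combine three ingredients: the ordering property of the first derivatives of a symmetric concave function, Euler's identity for one-homogeneous functions, and the supporting hyperplane characterisation of concavity, evaluated at the point $(1,\dots,1) \in \Gamma'$ (which lies in $\Gamma'$ by the hypothesis that $\gamma(1,\dots,1)$ is defined).

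First I would observe that, by symmetry and concavity of $\gamma$, the inequality \eqref{eq:concave_firstderivs} already stated in the excerpt applies: whenever $z_p > z_q$ one has $\dot\gamma^p(z) \leq \dot\gamma^q(z)$, while $z_p = z_q$ forces $\dot\gamma^p(z) = \dot\gamma^q(z)$ by symmetry of $\gamma$. Since $z_1 \leq z_2 \leq \dots \leq z_n$, this shows that $\dot\gamma^1(z)$ is the \emph{largest} of the partial derivatives, i.e.\ $\dot\gamma^1(z) \geq \dot\gamma^p(z)$ for every $p$.

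Next I would argue by contradiction: suppose $\dot\gamma^1(z) < 0$. By the previous step, this forces $\dot\gamma^p(z) < 0$ for every $p$, hence
\[\sum_{p=1}^n \dot\gamma^p(z) < 0.\]
On the other hand, since $\gamma$ is concave on $\Gamma'$, we have the supporting hyperplane inequality
\[\gamma(y) \leq \gamma(z) + \sum_{p=1}^n \dot\gamma^p(z)(y_p - z_p) \qquad \text{for all } y \in \Gamma'.\]
Apply this with $y = t(1,\dots,1)$ for $t > 0$ (which lies in $\Gamma'$ since $\Gamma'$ is a cone containing $(1,\dots,1)$). Using one-homogeneity on the left and Euler's identity $\sum_p \dot\gamma^p(z) z_p = \gamma(z)$ to cancel the $\gamma(z)$ terms, the inequality reduces to
\[t \gamma(1,\dots,1) \leq t \sum_{p=1}^n \dot\gamma^p(z).\]
Dividing by $t > 0$ yields $\gamma(1,\dots,1) \leq \sum_p \dot\gamma^p(z) < 0$, contradicting the hypothesis $\gamma(1,\dots,1) > 0$.

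There is no serious obstacle here: the only mild point to be careful about is invoking \eqref{eq:concave_firstderivs} in the presence of repeated eigenvalues, which is handled trivially by symmetry of $\gamma$, and the verification that $(1,\dots,1)$ lies in $\Gamma'$, which is immediate from the standing assumption that $\gamma(1,\dots,1)$ is defined and positive.
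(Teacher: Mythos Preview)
Your proof is correct. Both your argument and the paper's share the same two-step structure: first establish $\sum_p \dot\gamma^p(z) \geq 0$, then use the ordering \eqref{eq:concave_firstderivs} to conclude $\dot\gamma^1(z) \geq 0$. The difference lies in the first step. The paper argues geometrically: it shows the super-level set $S = \{\gamma \geq \gamma(z)\}$ is convex and, by averaging permutations of $z$, contains $\bar z = \frac{\tr(z)}{n}(1,\dots,1)$; one-homogeneity then forces $S$ to contain the ray $\{z + s\bar z : s \geq 0\}$, and differentiating at $s=0$ gives the desired sign. Your route is more direct and purely analytic: the tangent-plane inequality $\gamma(y) \leq \gamma(z) + \nabla\gamma(z)\cdot(y-z)$ at $y = t(1,\dots,1)$, combined with Euler's identity $\nabla\gamma(z)\cdot z = \gamma(z)$, immediately yields the quantitative bound $\sum_p \dot\gamma^p(z) \geq \gamma(1,\dots,1) > 0$, which is even slightly stronger than what the paper obtains. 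The contradiction framing is unnecessary---you could conclude directly---but the argument is sound either way.
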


\begin{proof}
\label{lem:+_first_deriv}
Fix $z \in \Gamma'$ satisfying $z_1 \leq \dots \leq z_n$. Since $\gamma$ is concave, the super-level set 
\[S:= \{y \in \Gamma': \gamma(y) \geq \gamma(z)\}\]
is convex, and since $\gamma$ is symmetric, each of the vectors
\[(z_m , \dots, z_n, z_1, \dots, z_{m-1})\]
is in $S$. Taking the average, we get $\bar z \in S$, where 
\[\bar z := \frac{\tr(z)}{n} (1, \dots, 1).\]
By assumption all of the entries of $\bar z$ are positive. Therefore, since $\gamma(1,\dots,1) > 0$, for every $s \geq 1$ there holds 
\[\gamma(s\bar z) = s \gamma( \bar z) \geq \gamma(\bar z) \geq \gamma(z),\]
which means $s \bar z \in S$. Appealing again to the convexity of $S$, we find that for each $s \geq 1$, the line segment connecting $z$ with $s \bar z$ is contained in $S$. Taking a limit as $s \to \infty$ we conclude that the ray
\[\{z + s \bar z : s \in [0,\infty) \}\]
is contained in $S$. Another way to say this is that $\gamma(z) \leq \gamma(z + s \bar z)$ for every $s \geq 0$, so we have
\[0 \leq \frac{d}{ds}\bigg|_{s = 0} \gamma(z + s\bar z) = \dot \gamma^i(z) \bar z_i = \frac{\tr(z)}{n} \sum_{i =1}^n \dot \gamma^i (z).\]
Without loss of generality, it suffices to prove the claim for $z$ satisfying $z_1 < \dots < z_n$, since the general case then follows by approximation. Under this extra assumption, since $\gamma$ is symmetric and concave, by \eqref{eq:concave_firstderivs} we have 
\[\dot\gamma^j (z) \leq \dot \gamma^i(z)\] 
whenever $i < j$. Substituting this fact into the inequality above, we get
\[0 \leq \tr(z) \dot \gamma^1 (z) ,\]
and the claim follows. 
\end{proof}

With the lemma in hand we can establish the following estimate for the gradient terms in the evolution of $f_\eta$. 

\begin{proposition}
\label{prop:f_evol_est}
Let $(x_0,t_0)\in M\times (0,T)$ be such that $\lambda(x_0,t_0) \in \Gamma' \Subset \Gamma$ and let $\varphi$ be an upper support function for $f_\eta$ at the point $(x_0,t_0)$. Suppose in addition that $f_\eta(x_0,t_0) \geq 0$. Then in a principal frame at $(x_0,t_0)$ there holds 
\begin{align*}
(\partial_t - \dot \gamma_\rho^{pq}\nabla_p \nabla_q) \varphi & \leq K\dot \gamma_\rho^{pq} A^2_{pq}\frac{\varphi}{h}  + \mu\frac{\varphi}{h} \ddot \gamma_\rho^{pq,rs}  \nabla^i A_{pq} \nabla_i A_{rs}+ \frac{2}{h} \dot \gamma_\rho^{pq} \nabla_p \varphi \nabla_q h\\
&+ C\rho \frac{h}{H} |\nabla_1 \varphi|^2  -(C^{-1}- C \rho) \sum_{p+q> 2} \frac{|\nabla_1 A_{pq}|^2}{h H},
\end{align*}
where $C=C(n,k,\Gamma')$.
\end{proposition}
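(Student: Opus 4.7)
My plan is to reduce the estimate to Lemma \ref{lem:lambda_1_grad_terms} via a support-function duality. Given an upper support $\varphi$ for $f_\eta$ at $(x_0,t_0)$ with $f_\eta(x_0,t_0) \geq 0$, since $h > 0$ we may multiply $f_\eta \leq \varphi$ through by $h$ and rearrange to conclude that
\[\psi := -\varphi h - \eta G_\rho\]
is a smooth lower support for $\lambda_1$ at $(x_0,t_0)$. Because $\lambda(x_0,t_0)\in\Gamma'\Subset\Gamma$, Lemma \ref{lem:lambda_1_grad_terms} applied to $\psi$ gives, in a principal frame at $(x_0,t_0)$,
\[L\psi \geq \dot\gamma_\rho^{pq}A^2_{pq}\psi - C\rho\frac{|\nabla_1\psi|^2}{H} + (C^{-1}-C\rho)\sum_{p+q>2}\frac{|\nabla_1 A_{pq}|^2}{H},\]
where $L := \partial_t - \dot\gamma_\rho^{pq}\nabla_p\nabla_q$.

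I would next apply the Leibniz rule to the defining identity, producing
\[L\psi = -hL\varphi - \varphi Lh - \eta LG_\rho + 2\dot\gamma_\rho^{pq}\nabla_p\varphi\nabla_q h,\]
and substitute the evolution equations $Lh = \dot\gamma_\rho^{pq}A^2_{pq}(h-K) - \mu\ddot\gamma_\rho^{pq,rs}\nabla^i A_{pq}\nabla_i A_{rs}$ and $LG_\rho = \dot\gamma_\rho^{pq}A^2_{pq}G_\rho$ from Section \ref{sec:evol_eqs}. After rearranging to isolate $hL\varphi$ and dividing by $h$, all zeroth-order curvature terms collapse, via the pointwise identity $\varphi h = -\lambda_1 - \eta G_\rho$ at $(x_0,t_0)$, to exactly $K\varphi\dot\gamma_\rho^{pq}A^2_{pq}/h$; the concavity contribution $\mu(\varphi/h)\ddot\gamma_\rho^{pq,rs}\nabla^i A_{pq}\nabla_i A_{rs}$ enters via $-\varphi Lh/h$ and the cross term $(2/h)\dot\gamma_\rho^{pq}\nabla_p\varphi\nabla_q h$ appears automatically. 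The resulting inequality has exactly the desired form, except that the bad gradient contribution shows up as $C\rho|\nabla_1\psi|^2/(hH)$ rather than $C\rho h|\nabla_1\varphi|^2/H$.

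To convert between the two I would use the first-order matching $\nabla_1\psi = \nabla_1 A_{11}$ at $(x_0,t_0)$ (a consequence of $\psi \leq A_{11}$ holding smoothly, with equality, after parallel-transporting the principal frame) together with the differentiated defining identity, which becomes
\[\alpha_0\nabla_1 A_{11} = -h\nabla_1\varphi - \sum_{p\geq 2}\beta_p\nabla_1 A_{pp},\]
where I have used that in a principal frame $\dot\gamma_\rho^{pq}$ is diagonal so that $\nabla_1 h = \sum_p(\dot\gamma_\rho^p-\mu)\nabla_1 A_{pp}$ and $\nabla_1 G_\rho = \sum_p\dot\gamma_\rho^p\nabla_1 A_{pp}$ involve only the diagonal components. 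Here $\alpha_0 = 1 - \varphi\mu + (\varphi+\eta)\dot\gamma_\rho^1$ and $\beta_p = (\varphi+\eta)\dot\gamma_\rho^p - \varphi\mu$ are bounded in modulus by a constant depending on $n,k,\Gamma'$. Once $\alpha_0$ is bounded below by a positive constant, squaring and applying Cauchy-Schwarz together with $\sum_{p\geq 2}|\nabla_1 A_{pp}|^2 \leq \sum_{p+q>2}|\nabla_1 A_{pq}|^2$ yields
\[|\nabla_1 A_{11}|^2 \leq Ch^2|\nabla_1\varphi|^2 + C\sum_{p+q>2}|\nabla_1 A_{pq}|^2,\]
and substituting back produces the desired $C\rho h|\nabla_1\varphi|^2/H$ together with an $O(\rho)$ contribution to the second sum that is absorbed into the existing $-(C^{-1}-C\rho)$ coefficient.

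I expect the main obstacle to be establishing the positive lower bound on $\alpha_0$. The required ingredients are: the positivity of $\dot\gamma_\rho^1$ on $\Gamma'$ (from Lemma \ref{lem:improved_deriv_bds} together with the preceding lemma asserting $\dot\gamma_\rho^1\geq 0$), the uniform upper bound on $\varphi = f_\eta$ coming from $f_\eta\geq 0$ combined with the uniform $k$-convexity guaranteed by Lemma \ref{lem:unif_parabolic}, and the specific choice of $\mu$ (chosen precisely so that $h\geq \mu H$ via the improved cylindrical estimate). These must be combined carefully: since the naive bound $\varphi\mu\leq 1$ can fail in the regime where $k$ is close to $n-1$, the argument must exploit the positive term $(\varphi+\eta)\dot\gamma_\rho^1$ to dominate $\varphi\mu - 1$ when $\varphi$ is large, while the case of small $\varphi$ is handled trivially since then $1-\varphi\mu$ itself is bounded below.
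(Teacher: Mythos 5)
Your overall structure matches the paper's: define $\psi = -h\varphi - \eta G_\rho$ as a lower support for $\lambda_1$, apply Lemma \ref{lem:lambda_1_grad_terms} to $\psi$, unwind the Leibniz rule with the evolution equations for $h$ and $G_\rho$, and convert $|\nabla_1\psi|^2$ into $|\nabla_1\varphi|^2$ via the differentiated identity
\[
\alpha_0\nabla_1 A_{11} = -h\nabla_1\varphi - \sum_{p\geq 2}\beta_p\nabla_1 A_{pp},\qquad
\alpha_0 = 1 + (\varphi+\eta)\dot\gamma_\rho^1 - \varphi\mu.
\]
This is exactly the paper's computation, and your algebra is correct.

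However, there is a genuine gap at the point you yourself identify as the main obstacle: the lower bound on $\alpha_0$. You propose a two-case analysis (small $\varphi$ trivially, large $\varphi$ by dominating $\varphi\mu - 1$ with $(\varphi+\eta)\dot\gamma_\rho^1$), using only the facts $\dot\gamma_\rho^1 \geq 0$ (from Lemma \ref{lem:+_first_deriv} applied to $\gamma_\rho$) and $\dot\gamma_\rho^1 \geq c(n,k,\Gamma')$ (from Lemma \ref{lem:improved_deriv_bds}). This does not close. Tracking the constants, the large-$\varphi$ case requires roughly $c(n,k,\Gamma') > \mu(k-2)/(k-1)$, and there is no reason why the generic ellipticity constant from Lemma \ref{lem:improved_deriv_bds} — which comes from compactness of $\Gamma' \Subset \Gamma$ and has no built-in relation to the specific constant $\mu$ chosen via the cylindrical estimate — should satisfy such a bound. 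You have applied Lemma \ref{lem:+_first_deriv} to the wrong function: the point of that lemma, and indeed the reason it is stated in the generality it is, is to apply it to the \emph{shifted} function
\[
a(z) := \gamma_\rho(z) - \mu\,\tr(z),
\]
which is symmetric, concave, one-homogeneous on $\Gamma$, and satisfies $a(1,\dots,1) > 0$ precisely because of the choice of $\mu$ (this is where $(1,\dots,1) \in \interior(\Gamma_{1/\mu})$ is used). The lemma then gives $\dot a^1 = \dot\gamma_\rho^1 - \mu \geq 0$ pointwise at $\lambda(x_0,t_0)$, so $\xi_1 := \eta\dot\gamma_\rho^1 + \varphi(\dot\gamma_\rho^1 - \mu) \geq 0$ directly from $\varphi \geq 0$, hence $\alpha_0 = 1 + \xi_1 \geq 1$ with no case distinction. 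Without this observation the proof as you sketch it does not go through.
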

\begin{proof}
We first observe that the smooth function
\[\tilde \varphi(x,t) := -h(x,t) \varphi(x,t) - \eta G_\rho(x,t).\]
is a lower support for $\lambda_1$ at $(x_0,t_0)$, and
\[\varphi(x,t) = \frac{-\tilde \varphi(x,t) - \eta G_\rho(x,t)}{h(x,t)}.\]
Inserting the evolution equations for $G_\rho$ and $h$ into the identity
\begin{align*}
(\partial_t - \dot \gamma_\rho^{pq}\nabla_p \nabla_q) \varphi =& -\frac{1}{h} (\partial_t - \dot \gamma_\rho^{pq} \nabla_p \nabla_q) (\tilde \varphi+\eta G_\rho) - \frac{\varphi}{h} (\partial_t - \dot \gamma_\rho^{pq} \nabla_p \nabla_q) h\\
& + \frac{2}{h} \dot \gamma_\rho^{pq} \nabla_p h \nabla_q \varphi,
\end{align*}
we find that
\begin{align*}
(\partial_t - \dot \gamma_{\rho}^{pq} \nabla_p \nabla_q) \varphi =& -  \frac{1}{h} (\partial_t - \dot \gamma_{\rho}^{pq} \nabla_p \nabla_q) \tilde \varphi+K \dot \gamma_\rho^{pq} A^2_{pq} \frac{\varphi}{h} + \dot \gamma_\rho^{pq}A^2_{pq} \frac{\tilde \varphi}{h} \\
&  + \mu\frac{\varphi}{h} \ddot \gamma_\rho^{pq,rs}  \nabla^i A_{pq} \nabla_i A_{rs} + \frac{2}{h} \dot \gamma_\rho^{pq} \nabla_q h \nabla_p \varphi . 
\end{align*}
Applying Lemma \ref{lem:lambda_1_grad_terms} to $\tilde \varphi$ we find that at the point $(x_0,t_0)$ there holds
\begin{align*}
(\partial_t - \dot \gamma_{\rho}^{pq} \nabla_p \nabla_q) \tilde \varphi \geq \dot \gamma_\rho^{pq} A^2_{pq} \tilde \varphi  - C \rho \frac{|\nabla_1 \tilde \varphi|^2}{H}   + (C^{-1} - C \rho ) \sum_{p+q> 2} \frac{|\nabla_1 A_{pq}|^2}{H} ,
\end{align*} 
where $C = C(n,k,\Gamma')$, hence
\begin{align*}
(\partial_t - \dot \gamma_{\rho}^{pq} \nabla_p \nabla_q)\varphi & \leq K\dot \gamma_\rho^{pq} A^2_{pq}  \frac{\varphi}{h}  + \mu\frac{\varphi}{h} \ddot \gamma_\rho^{pq,rs}  \nabla^i A_{pq} \nabla_i A_{rs} + \frac{2}{h} \dot \gamma_\rho^{pq}  \nabla_p h \nabla_q \varphi\\
&+\frac{1}{h} \bigg( C \rho \frac{|\nabla_1 \tilde \varphi|^2}{H}   - (C^{-1} - C \rho ) \sum_{p+q> 2} \frac{|\nabla_1 A_{pq}|^2}{H} \bigg).
\end{align*}

We are going to decompose and then absorb part of the term $|\nabla_1 \tilde \varphi|^2$. At $(x_0,t_0)$ we compute
\begin{align*}
\nabla_1 \tilde \varphi &= -h \nabla_1 \varphi -\varphi \nabla_1 h - \eta \nabla_1 G_\rho\\
		&= - h \nabla_1 \varphi-\varphi (\dot \gamma^i_\rho - \mu)\nabla_1 A_{ii} - \eta \dot \gamma^i_\rho \nabla_1 A_{ii}\\
		&=-h \nabla_1 \varphi - (\eta \dot \gamma^1_\rho + \varphi(\dot \gamma^1_\rho - \mu)) \nabla_1 \tilde \varphi- \sum_{i \geq 2}^n (\eta \dot \gamma^i_\rho + \varphi(\dot \gamma^i_\rho - \mu))\nabla_1 A_{ii},
\end{align*} 
which we rearrange to obtain
\begin{align*}
(1 + \eta\dot \gamma_\rho^1 + \varphi(\dot \gamma_\rho^1 - \mu))\nabla_1 \tilde \varphi &=-h \nabla_1 \varphi - \sum_{i \geq 2}^n (\eta \dot \gamma_\rho^i + \varphi(\dot \gamma_\rho^i - \mu))\nabla_1 A_{ii}.
\end{align*}
 The function 
\[a(z):=  \gamma_\rho(z) - \mu \tr(z)\]
is symmetric, concave and one-homogeneous in $\Gamma$. Moreover, since $\gamma_\rho(z) \geq \gamma_1(z)$, $a(z)$ is positive whenever $\gamma_1(z) > \mu\tr(z)$, or equivalently when $z \in \interior (\Gamma_{1/\mu})$. By the definition of $\mu$, for each $0 \leq m \leq k-1$ we have 
\[(\underbrace{0,\dots,0}_{m \text{ entries}}, 1, \dots, 1) \in \interior (\Gamma_{1/\mu}),\]
so in particular 
\[a(1,\dots,1) >0.\]
We may therefore apply Lemma \ref{lem:+_first_deriv} with $\gamma = a$ to conclude that the quantity 
\[\frac{\partial a}{\partial z_1} (\lambda(x_0,t_0)) = \dot \gamma_\rho^1(\lambda(x_0,t_0)) - \mu \geq 0.\] 
We are assuming that
\[\varphi(x_0,t_0) = f_\eta(x_0,t_0) \geq 0,\]
so we have 
\[1 + \eta\dot \gamma_\rho^1 + \varphi(\dot \gamma_\rho^1 - \mu) \geq 1.\]
In particular, at the point $(x_0, t_0)$ there holds
\begin{align*}
\nabla_1 \tilde \varphi &= \frac{1}{1 + \eta\dot \gamma_\rho^1 + \varphi(\dot \gamma_\rho^1 - \mu)} \bigg(-h \nabla_1 \varphi - \sum_{i \geq 2}^n (\eta \dot \gamma_\rho^i + \varphi(\dot \gamma_\rho^i - \mu))\nabla_1 A_{ii}\bigg).
\end{align*}

Let us introduce the abbreviation 
\[\xi_i := \eta\dot \gamma_\rho^i(\lambda(x_0,t_0)) + \varphi(x_0,t_0)(\dot \gamma_\rho^i(x_0,t_0) - \mu),\]
so that we may write the last identity as
\begin{align*}
\nabla_1 \tilde \varphi &= -\frac{1}{1+\xi_1} h \nabla_1 \varphi - \sum_{i \geq 2}^n \frac{\xi_i}{1+\xi_1}\nabla_1 A_{ii}.
\end{align*}
Then since $\xi_1 \geq 0$ we can bound 
\begin{align*}
|\nabla_1 \tilde \varphi|^2 &\leq 2 h^2 |\nabla_1 \varphi|^2 + C(n) \sum_{i \geq 2}^n |\xi_i|^2|\nabla_1 A_{ii}|^2.
\end{align*}
There holds
\[|\xi_i|^2 \leq 2  \eta^2 |\dot \gamma_\rho^i|^2 + 4 \varphi^2(|\dot \gamma_\rho^i|^2 + \mu^2),\]
and $\eta \in (0,1]$ by definition. Since $\lambda(x_0,t_0) \in \Gamma'$, we can bound $\dot \gamma_\rho^{i}(\lambda(x_0,t_0))$ purely in terms of $n$, $k$ and $\Gamma'$ using Lemma \ref{lem:improved_deriv_bds}, and at $(x_0,t_0)$, 
\begin{align*}
0 \leq  \varphi = \frac{-\lambda_1 - \eta G_\rho}{h} \leq \frac{ \lambda_2 +\dots+ \lambda_k}{h} \leq (k-1) \frac{ H}{h} \leq \frac{k-1}{\mu}.
\end{align*}
Putting these facts together, we can bound $|\xi_i|$ purely in terms of $n$, $k$ and $\Gamma'$, hence
\begin{align*}
|\nabla_1 \tilde \varphi|^2 &\leq 2 h^2 |\nabla_1 \varphi|^2 + C(n,k,\Gamma') \sum_{i \geq 2}^n |\nabla_1 A_{ii}|^2.
\end{align*}
Substituting this estimate back in, we find that at $(x_0,t_0)$ there holds 
\begin{align*}
(\partial_t - \dot \gamma_{\rho}^{pq} \nabla_p \nabla_q)\varphi &\leq  K\dot \gamma_\rho^{pq} A^2_{pq}\frac{\varphi}{h}  + \mu\frac{\varphi}{h} \ddot \gamma_\rho^{pq,rs}  \nabla^i A_{pq} \nabla_i A_{rs} + \frac{2}{h} \dot \gamma^{pq}_\rho \nabla_p h  \nabla_q \varphi\\
&+C\rho\frac{h}{H}|\nabla_1 \varphi|^2 +  C \rho \sum_{i \geq 2}^n \frac{|\nabla_1 A_{ii}|^2}{hH}  - (C^{-1} - C \rho ) \sum_{p+q> 2} \frac{|\nabla_1 A_{pq}|^2}{hH} .
\end{align*}
Absorbing the second-last term into the last yields the desired estimate. 
\end{proof}

The last result completes our analysis of the gradient terms coming from the evolution of $\lambda_1$. Next we apply Lemma \ref{lem:good_grad} to extract a good term controlling $|\nabla A|^2$, and simplify somewhat.

\begin{lemma}
\label{lem:f_stamp_1}
Let $(x_0,t_0)\in M\times (0,T)$ be such that $\lambda(x_0,t_0) \in \Gamma' \Subset \Gamma$ and let $\varphi$ be an upper support function for $f_\eta$ at the point $(x_0,t_0)$. Suppose in addition that $f_\eta(x_0,t_0) > 0$. Then, in a principal frame at the point $(x_0,t_0)$ there holds 
\begin{align*}
(\partial_t - \dot \gamma_\rho^{pq} \nabla_p \nabla_q) \varphi & \leq CK|A| \varphi  - C^{-1} \rho \varphi \frac{|\nabla A|^2}{hH} + C(\rho^{-1} + KH^{-1} )  \frac{|\nabla \varphi|^2}{ \varphi }\\
&-(C^{-1}- C \rho) \sum_{p+q> 2} \frac{|\nabla_1 A_{pq}|^2}{h H},
\end{align*}
where $C = C(n,k,\Gamma')$. 
\end{lemma}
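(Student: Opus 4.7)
The plan is to take the estimate of Proposition \ref{prop:f_evol_est} as the starting point and bound each of its five right-hand terms separately, so as to produce the two negative gradient-of-$A$ contributions, the inhomogeneous term $CK|A|\varphi$, and the $|\nabla \varphi|^2/\varphi$ term that appear in the statement. The hypothesis $\lambda(x_0,t_0) \in \Gamma'$ means that at this point the bounds of Lemma \ref{lem:improved_deriv_bds} are in force, and in particular $\dot\gamma_\rho^{pq}$ is uniformly equivalent to the identity in a principal frame.

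I would first dispose of the easy pieces. Since $\nabla A$ is totally symmetric by the Codazzi equations, Lemma \ref{lem:good_grad} applies and gives
\[
\mu\frac{\varphi}{h}\ddot\gamma_\rho^{pq,rs}\nabla^i A_{pq}\nabla_i A_{rs} \leq -\frac{\rho}{C}\frac{\varphi|\nabla A|^2}{hH},
\]
which produces the first negative gradient-of-$A$ term. The off-diagonal summation term already on the right of Proposition \ref{prop:f_evol_est} is simply preserved unchanged. For the zeroth-order term, Lemma \ref{lem:improved_deriv_bds} together with $A^2_{pq}\leq|A|^2\delta_{pq}$ in a principal frame gives $\dot\gamma_\rho^{pq}A^2_{pq}\leq C|A|^2$, after which the bounds $|A|\leq CH$ (on $\Gamma'$) and $h\geq\mu H$ combine to
\[
K\dot\gamma_\rho^{pq} A^2_{pq}\frac{\varphi}{h} \leq CK|A|\varphi.
\]

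The remaining two terms are more delicate. For the cross term $\frac{2}{h}\dot\gamma_\rho^{pq}\nabla_p\varphi\nabla_q h$, I would apply Cauchy--Schwarz for the positive-definite form $\dot\gamma_\rho^{pq}$, use the pointwise bound $|\nabla h| \leq C|\nabla A|$ coming from $\nabla h = (\dot\gamma_\rho^{ij}-\mu\delta^{ij})\nabla A_{ij}$, and then invoke a weighted Young's inequality with weight proportional to $\rho$ to obtain
\[
\frac{2}{h}\dot\gamma_\rho^{pq}\nabla_p\varphi\nabla_q h \leq \frac{\rho}{2C}\frac{\varphi|\nabla A|^2}{hH} + C\rho^{-1}\frac{|\nabla\varphi|^2}{\varphi},
\]
where the inequality $H/h \leq 1/\mu$ is used to reduce $H/h$ to a constant in the $\rho^{-1}$ coefficient; half of the diffusion's gradient term then absorbs the $|\nabla A|^2$ piece. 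For the final term of Proposition \ref{prop:f_evol_est}, I would use $h = G_\rho - \mu H + K$ together with $G_\rho\leq CH$ on $\Gamma'$ to obtain $h/H \leq C + KH^{-1}$, and then use the bound $\varphi \leq (k-1)H/h\leq (k-1)/\mu$ (established in the proof of Proposition \ref{prop:f_evol_est}) to convert $|\nabla \varphi|^2$ into $\varphi\cdot|\nabla\varphi|^2/\varphi$, which gives
\[
C\rho\frac{h}{H}|\nabla_1 \varphi|^2 \leq C(\rho^{-1}+KH^{-1})\frac{|\nabla\varphi|^2}{\varphi}.
\]

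Collecting all contributions yields the stated inequality. The main obstacle is bookkeeping rather than a conceptual step: one must tie the Young's weight in the cross-term estimate to the precise constant coming from Lemma \ref{lem:good_grad}, so that both negative gradient-of-$A$ terms survive with strictly positive coefficients, and one must keep the $K$-dependence separate from the $\rho^{-1}$-dependence, since $KH^{-1}$ enters only through the bound on $h/H$ in the last term and conflating it with the $\rho^{-1}$ contribution would weaken the estimate for later use in the pinching argument.
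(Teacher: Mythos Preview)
Your proposal is correct and follows essentially the same approach as the paper: start from Proposition~\ref{prop:f_evol_est}, use Lemma~\ref{lem:good_grad} for the $\ddot\gamma_\rho$ term, estimate the zeroth-order term via $h\geq\mu H$ and the first-derivative bounds, split the cross term by a Young's inequality with weight proportional to $\rho$ (so that half of the good $|\nabla A|^2$ term survives), and finally use $h/H\leq 1+K/H$ and $\varphi\leq(k-1)/\mu$ to absorb the remaining $|\nabla\varphi|^2$ contributions. Your remark about tying the Young's weight to the constant from Lemma~\ref{lem:good_grad} is exactly the point, and the paper makes the same choice explicitly.
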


\begin{proof}
Let us write $C$ for a large constant depending only on $n$, $k$ and $\Gamma'$. By Proposition \ref{prop:f_evol_est} we have at $(x_0,t_0)$ the estimate
\begin{align*}
(\partial_t - \dot \gamma_\rho^{pq}\nabla_p \nabla_q) \varphi & \leq K\dot \gamma_\rho^{pq} A^2_{pq}\frac{\varphi}{h}  + \mu\frac{\varphi}{h} \ddot \gamma_\rho^{pq,rs}  \nabla^i A_{pq} \nabla_i A_{rs}+ \frac{2}{h} \dot \gamma_\rho^{pq} \nabla_p h\nabla_q \varphi \\
&+ C\rho \frac{h}{H} |\nabla_1 \varphi|^2  -(C^{-1}- C \rho) \sum_{p+q> 2} \frac{|\nabla_1 A_{pq}|^2}{h H}.
\end{align*}
Since $\lambda(x_0,t_0) \in \Gamma'$, by Lemma \ref{lem:improved_deriv_bds} we can estimate 
\[h^{-1} \dot \gamma_\rho^{pq} A^2_{pq}\leq C \mu^{-1} H^{-1} |A|^2 \leq C|A|,\]
which means that at $(x_0,t_0)$,
\[K\dot \gamma_\rho^{pq} A^2_{pq} \frac{\varphi}{h} \leq CK |A| \varphi.\]
Invoking Lemma \ref{lem:good_grad}, we can bound
\[\ddot \gamma_\rho^{pq,rs} \nabla^i A_{pq} \nabla_i A_{rs} \leq -c_0(n,k,\Gamma')\rho \frac{|\nabla A |^2}{H}.\]
Since $\varphi(x_0,t_0) > 0$ and $\dot \gamma_\rho^i(\lambda(x_0,t_0)) \leq C$, at the point $(x_0,t_0)$ we can use Young's inequality to estimate 
\begin{align*}
\frac{2}{h} \dot \gamma_\rho^{pq} \nabla_p h  \nabla_q \varphi&\leq  s C\varphi\frac{|\nabla h|^2}{h H} +  s^{-1} C\frac{H}{h} \frac{|\nabla \varphi|^2}{\varphi},
\end{align*}
where $s$ can be any positive number. At $(x_0,t_0)$ we have 
\[|\nabla h|^2 \leq 2|\nabla G_\rho|^2 + 2\mu^2 |\nabla A|^2  \leq C|\nabla A|^2,\]
and combining this with the previous inequality gives
\begin{align*}
\frac{2}{h} \dot \gamma_\rho^{pq} \nabla_p h \nabla_q \varphi &\leq  sC_0 \varphi\frac{|\nabla A|^2}{h H}+ s^{-1}C_0 \frac{H}{h} \frac{|\nabla \varphi|^2}{ \varphi } ,
\end{align*}
where $C_0 = C_0(n,k,\Gamma')$. Setting $s = \frac{c_0 \mu }{2C_0}\rho$ and putting all of this together, we get 
\begin{align*}
(\partial_t - \dot \gamma_\rho^{pq}\nabla_p \nabla_q) \varphi & \leq CK|A| \varphi  - \frac{c_0 \mu }{2} \rho \varphi \frac{|\nabla A|^2}{hH}+ C \rho^{-1} \frac{H}{h} \frac{|\nabla \varphi|^2}{ \varphi } + C\rho \frac{h}{H} |\nabla_1 \varphi|^2 \\
& -(C^{-1}- C \rho) \sum_{p+q> 2} \frac{|\nabla_1 A_{pq}|^2}{h H}.
\end{align*}
To finish, we use Lemma \ref{lem:zeroth-order_ests} to estimate
\[\frac{H}{h} \leq \mu^{-1}, \qquad \frac{h}{H}\leq \frac{G_\rho}{H} + \frac{K}{H} \leq 1 + \frac{K}{H},\]
and appeal to 
\[\varphi (x_0,t_0) \leq f_\eta(x_0,t_0) \leq (k-1) \mu^{-1}\]
to estimate
\begin{align*}
C \rho^{-1} \frac{H}{h} \frac{|\nabla \varphi|^2}{ \varphi } + C\rho \frac{h}{H} |\nabla_1 \varphi|^2 \leq C(\rho^{-1} + KH^{-1} )  \frac{|\nabla \varphi|^2}{ \varphi }.
\end{align*} 
\end{proof}

\section{Proof of the convexity estimate}
\label{sec:conv_est}

In the previous section we used barrier functions to interpret $(\partial_t - \dot \gamma_\rho^{pq}\nabla_p \nabla_q) \lambda_1$. To prove the convexity estimate we will instead need to work with a distributional interpretation. Following Brendle \cite{Brendle15} (see also \cite{Lang17}), we observe that $\lambda_1$ is a semiconcave function on $M\times [0,T)$ and apply Alexandrov's theorem. Using the characterisation 
\[\lambda_1(x,t) = \min_{e \in T_x M_t, \; |e| =1} A(x,t) (e,e),\]
it is possible to realise $\lambda_1$ locally as the infimum over a family of smooth functions which is compact in the $C^2$-norm. This is sufficient to conclude that $\lambda_1$ is locally semiconcave on $M\times [0,T)$, and from this fact it follows that for every choice of $\eta$ the function $f_\eta$ is locally semiconvex on $M\times[0,T)$. 

We discuss some properties of semiconvex functions on Riemannian manifolds in Appendix \ref{app:semiconvex}. In particular, by Alexandrov's theorem there is a set $Q$ of full measure in $M\times[0,T)$ on which $f_\eta$ is twice differentiable, and if $\varphi:M\times[0,T) \to \mathbb{R}$ is locally Lipschitz and nonnegative, then by Lemma \ref{lem:semiconvex_dist} we have
\begin{equation}
\label{eq:f_eta_IBP}
\int_{M_t} \varphi \dot \gamma^{pq}_\rho \nabla_p \nabla_q f_\eta \,d\mu_t \leq -\int_{M_t} \dot \gamma^{pq}_\rho \nabla_p f_\eta \nabla_q \varphi\,d\mu_t - \int_{M_t} \varphi \ddot \gamma_\rho^{rs,pq}\nabla_p A_{rs} \nabla_q f_\eta\, d\mu_t
\end{equation}
for almost every $t \in [0,T)$. Notice that if $f_\eta$ were smooth, this inequality would hold with equality by the divergence theorem. 

\begin{proposition}
There is a constant $\bar C = \bar C(n,k,\rho,M_0)$ such that if 
\[\varphi : M \times[0,T) \to \mathbb{R}\]
is nonnegative, locally Lipschitz and satisfies
\[\supp(\varphi) \subset \supp(f_\eta) \cap \supp(G_\rho - \bar C),\]
then for almost every $t \in [0,T)$ there holds 
\begin{align}
\label{eq:f_stamp_int}
\int_M \varphi \partial_t f_\eta \, d\mu_t &\leq - \int_{M_t} \dot \gamma_\rho^{pq} \nabla_p f_\eta \nabla_q \varphi\,d\mu_t - \int_{M_t} \varphi \ddot \gamma^{rs,pq}_\rho \nabla_p A_{rs} \nabla_q f_\eta \, d\mu_t \notag \\
& + C(\rho^{-1} + K) \int_{M_t} \varphi \frac{|\nabla f_\eta|^2}{f_\eta} \,d\mu_t - C^{-1} \rho \int_{M_t} \varphi f_\eta \frac{|\nabla A|^2}{H^2} \,d\mu_t \notag \\
& - (C^{-1} - C\rho) \sum_{p+q>2} \int_{M_t}\frac{|\nabla_1 A_{pq}|^2}{hH} \,d\mu_t + CK\int_{M_t} |A|\varphi \,d\mu_t,
\end{align}
where $C = C(n,k)$. 
\end{proposition}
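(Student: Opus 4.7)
The plan is to combine the pointwise barrier estimate of Lemma \ref{lem:f_stamp_1} with Alexandrov's theorem and the distributional identity \eqref{eq:f_eta_IBP}. First I would fix the constant $\bar C$ using the improved cylindrical estimate Corollary \ref{cor:cyl_est}: applying it with $\varepsilon = 1$ gives a constant $C_0 = C_0(n,k,\rho,M_0)$ with $H \leq (\alpha_1+1)G_1 + C_0$, so by choosing $\bar C$ large relative to $C_0$ we guarantee that on $\supp(G_\rho - \bar C)$ the principal curvatures lie in the symmetric cone $\Gamma' := \Gamma_{\alpha_1 + 2}$, which satisfies $\Gamma' \Subset \Gamma$. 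Enlarging $\bar C$ further if necessary, one also obtains the two-sided comparison $\mu H \leq h \leq C H$ on $\supp(G_\rho - \bar C)$, because $h \leq G_\rho + K$ and $K/G_\rho \leq K/\bar C$ can be made arbitrarily small.

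Next I would exploit the semiconvexity of $f_\eta$. Because $\lambda_1(x,t) = \min_{|e|=1} A(x,t)(e,e)$ is locally semiconcave, the function $f_\eta = (-\lambda_1 - \eta G_\rho)/h$ is locally semiconvex. By Alexandrov's theorem there is a full-measure set $Q \subset M\times[0,T)$ on which $f_\eta$ is twice differentiable in space and differentiable in time. At any $(x_0,t_0) \in Q$ with $f_\eta(x_0,t_0) > 0$ and $G_\rho(x_0,t_0) > \bar C$, the regularised Taylor polynomial
\[\varphi_\delta(x,t) := T_{(x_0,t_0)}[f_\eta](x,t) + \delta\, d_{g(t_0)}(x,x_0)^2 + \delta(t-t_0)^2\]
furnishes a smooth strict upper support for $f_\eta$ on a small parabolic neighbourhood of $(x_0,t_0)$, whose first and second derivatives at $(x_0,t_0)$ agree with those of $f_\eta$ up to an error of order $\delta$. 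Applying Lemma \ref{lem:f_stamp_1} to $\varphi_\delta$ with $\Gamma' = \Gamma_{\alpha_1 + 2}$ and letting $\delta \downarrow 0$ yields the pointwise a.e.\ inequality
\begin{align*}
\partial_t f_\eta - \dot\gamma_\rho^{pq}\nabla_p\nabla_q f_\eta &\leq CK|A| f_\eta - C^{-1}\rho\, f_\eta\frac{|\nabla A|^2}{hH} + C(\rho^{-1} + KH^{-1})\frac{|\nabla f_\eta|^2}{f_\eta} \\
&\quad - (C^{-1} - C\rho)\sum_{p+q > 2}\frac{|\nabla_1 A_{pq}|^2}{hH}
\end{align*}
at each point of $Q \cap \{f_\eta > 0\} \cap \{G_\rho > \bar C\}$, where $C$ depends only on $n$ and $k$.

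Now for $\varphi$ as in the statement I would multiply this pointwise inequality by $\varphi$, integrate over $M_t$ using the fact that $Q$ has full measure, and substitute the distributional identity \eqref{eq:f_eta_IBP} in the form
\[-\int_{M_t} \varphi\, \dot\gamma_\rho^{pq}\nabla_p\nabla_q f_\eta\, d\mu_t \geq \int_{M_t}\dot\gamma_\rho^{pq}\nabla_p f_\eta \nabla_q \varphi\, d\mu_t + \int_{M_t} \varphi\, \ddot\gamma_\rho^{rs,pq}\nabla_p A_{rs}\nabla_q f_\eta\, d\mu_t,\]
valid for almost every $t$. Finally I would convert the estimate to the form stated in \eqref{eq:f_stamp_int} by using the bounds already established on $\supp(\varphi)$: the comparison $h \leq CH$ replaces $|\nabla A|^2/(hH)$ by a constant multiple of $|\nabla A|^2/H^2$, and the inequality $H \geq \bar C$ absorbs $KH^{-1}$ into a constant times $K$ (after enlarging $\bar C$ if needed so that $\bar C \geq 1$). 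The remaining gradient term involving $\sum_{p+q>2}|\nabla_1 A_{pq}|^2/(hH)$ passes through unchanged.

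The main obstacle is the technical justification that the pointwise barrier inequality of Lemma \ref{lem:f_stamp_1}, which was derived for smooth upper supports, integrates against an arbitrary nonnegative Lipschitz test function in the required weak sense when $f_\eta$ is only semiconvex; this is precisely the role played by \eqref{eq:f_eta_IBP} combined with the $\delta \downarrow 0$ limit of the Taylor upper supports, and is the only nontrivial step in the argument.
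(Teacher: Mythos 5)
Your proposal follows essentially the same route as the paper's proof: fix $\bar C$ via the cylindrical estimate so that $\lambda$ lands in a fixed cone $\Gamma' \Subset \Gamma$ depending only on $n,k$ wherever $G_\rho \geq \bar C$, apply the barrier estimate of Lemma \ref{lem:f_stamp_1} at Alexandrov points of $f_\eta$, integrate against $\varphi$, and substitute the integration-by-parts inequality \eqref{eq:f_eta_IBP}; your choice $\varepsilon = 1$, $\Gamma' = \Gamma_{\alpha_1+2}$ works just as well as the paper's $\varepsilon_0/2$, $\Gamma_{\alpha_1+\varepsilon_0}$, since both cones depend only on $n$ and $k$.

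One technical point in your construction deserves correction: the term $\delta(t-t_0)^2$ is too weak in the time direction. At a point where $f_\eta$ is differentiable in time, the first-order expansion has error $o(|t-t_0|)$, and $\delta(t-t_0)^2$ is eventually dominated by such an error, so $\varphi_\delta \geq f_\eta$ is not guaranteed. Since the upper support is only required on the backward neighbourhood $\{t \leq t_0\}$, the correct regularisation is the linear term $\delta(t_0 - t)$, which is nonnegative there, dominates the $o(|t-t_0|)$ error, and still yields $\partial_t \varphi_\delta(x_0,t_0) \to \partial_t f_\eta(x_0,t_0)$ as $\delta \downarrow 0$. You should also record the uniform bound $f_\eta \leq (k-1)/\mu = C(n,k)$, which is needed to convert the zeroth-order term $CK|A|f_\eta$ from Lemma \ref{lem:f_stamp_1} into the term $CK|A|$ appearing in \eqref{eq:f_stamp_int}.
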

\begin{proof}
First fix an arbitrary point $(x_0,t_0) \in Q$. Then since $f_\eta$ is twice differentiable at $(x_0,t_0)$ there exists an upper support $\phi$ for $f_\eta$ at $(x_0,t_0)$. Since $\phi(x,t) \geq f_\eta(x,t)$ with equality at $(x_0,t_0)$ we obtain 
\[(\partial_t - \dot \gamma^{pq}_\rho \nabla_p \nabla_q)f_\eta (x_0,t_0) \leq (\partial_t -\dot \gamma_\rho^{pq} \nabla_p \nabla_q) \phi(x_0,t_0), \qquad \nabla f_\eta(x_0,t_0) = \nabla \phi(x_0,t_0).\]
Substituting these facts into Lemma \ref{lem:f_stamp_1} we find that, provided $f_\eta(x_0,t_0)>0$, in a principal frame at $(x_0,t_0)$ there holds 
\begin{align*}
(\partial_t - \dot \gamma_\rho^{pq} \nabla_p \nabla_q) f_\eta & \leq CK|A| f_\eta - C^{-1} \rho f_\eta \frac{|\nabla A|^2}{hH} + C(\rho^{-1} + KH^{-1} )  \frac{|\nabla f_\eta|^2}{ f_\eta}\\
&-(C^{-1}- C \rho) \sum_{p+q> 2} \frac{|\nabla_1 A_{pq}|^2}{h H},
\end{align*}
where $C$ depends on $n$, $k$, and the distance from $\lambda(x_0,t_0)/H(x_0,t_0)$ to $\partial \Gamma$. 

Let us write 
\[\Gamma' = \Gamma_{\alpha_1 + \varepsilon_0} = \Gamma_{(1+10^{-10})\alpha_1},\]
and observe that $\Gamma'$ is completely determined by $n$ and $k$. By the cylindrical estimate in Corollary \ref{cor:cyl_est} we have 
\[H \leq (\alpha_1 + \varepsilon_0/2) G_1 + C_{\varepsilon_0/2}(n,k,\rho, M_0),\]
so by setting 
\[\bar C = 2 \varepsilon_0^{-1} \rho^{-1} C_{\varepsilon_0/2}\]
we ensure that $H(x,t) \leq (\alpha_1 +\varepsilon_0) G_1(x,t)$, or equivalently $\lambda(x,t) \in \Gamma'$, whenever $G_1(x,t) \geq \rho\bar C$. Then since $G_1 \geq \rho G_\rho$ we have 
\[\lambda(x,t) \in \Gamma' \qquad \text{for all} \qquad (x,t) \in \supp(G_\rho -\bar C).\]
In particular, if $(x_0,t_0) \in Q \cap \supp(f_\eta) \cap \supp(G_\rho - \bar C)$ then at $(x_0,t_0)$ there holds 
\begin{align*}
(\partial_t - \dot \gamma_\rho^{pq} \nabla_p \nabla_q) f_\eta & \leq CK|A| f_\eta - C^{-1} \rho f_\eta \frac{|\nabla A|^2}{hH} + C(\rho^{-1} + KH^{-1} )  \frac{|\nabla f_\eta|^2}{ f_\eta}\\
&-(C^{-1}- C \rho) \sum_{p+q> 2} \frac{|\nabla_1 A_{pq}|^2}{h H}
\end{align*}
with $C = C(n,k)$. 

Without loss of generality we may assume $\bar C \geq \max\{1,K\}$ so that the inequalities
\[1 \leq G_\rho \leq H, \qquad h \leq G_\rho + K \leq 2 G_\rho\]
hold on $\supp(G_\rho - \bar C)$. We can also bound $f_\eta \leq C(n,k)$, so at each point $(x_0, t_0) \in Q \cap \supp(f_\eta)\cap  \supp(G_\rho - \bar C)$ we have 
\begin{align*}
(\partial_t - \dot \gamma_\rho^{pq} \nabla_p \nabla_q) f_\eta & \leq CK|A| - C^{-1} \rho f_\eta \frac{|\nabla A|^2}{H^2} + C(\rho^{-1} + K )  \frac{|\nabla f_\eta|^2}{ f_\eta}\\
&-(C^{-1}- C \rho) \sum_{p+q> 2} \frac{|\nabla_1 A_{pq}|^2}{h H}
\end{align*}
with $C = C(n,k)$. Suppose $\varphi$ is nonnegative, locally Lipschitz and that 
\[\supp(\varphi) \subset \supp(f_\eta) \cap \supp(G_\rho - \bar C).\] 
For almost every $t \in [0,T)$ the set $Q \cap M_t$ has full measure in $M_t$, and on such a timeslice we can multiply the last inequality by $\varphi$ and integrate to obtain 
\begin{align*}
\int_M \varphi \partial_t f_\eta \, d\mu_t &\leq  \int_{M_t} \varphi \dot \gamma_\rho^{pq} \nabla_p \nabla_q f_\eta \,d\mu_t + C(\rho^{-1} + K) \int_{M_t} \varphi \frac{|\nabla f_\eta|^2}{f_\eta} \,d\mu_t \\
&- C^{-1} \rho \int_{M_t} \varphi f_\eta \frac{|\nabla A|^2}{H^2} \,d\mu_t  - (C^{-1} - C\rho) \sum_{p+q>2} \int_{M_t}\varphi \frac{|\nabla_1 A_{pq}|^2}{hH} \,d\mu_t\\
& + CK\int_{M_t} |A|\varphi\,d\mu_t ,
\end{align*}
with $C=C(n,k)$. The result now follows by applying the integration by parts inequality \eqref{eq:f_eta_IBP} to the first term on the right. 
\end{proof}

\begin{proof}[Proof of Theorem \ref{thm:conv_est_intro}]
We verify that for $\rho$ sufficiently small the functions $f_\eta$ satisfy the hypotheses of the pinching estimate in Theorem \ref{thm:Stamp}. Fix $\eta \in (0,1]$ and set
\[\Gamma' := \{z \in \Gamma: \min_{1 \leq i \leq n} z_i \leq - \eta \gamma_\rho(z)\} \cap \Gamma_{\alpha_1 +\varepsilon_0},\]
so that $\lambda(x,t) \in \Gamma'$ whenever $(x,t) \in \supp(f_\eta) \cap \supp(G_\rho -\bar C)$. Since $\Cyl$ is contained in the positive cone and $\Gamma_{\alpha_1 + \varepsilon_0}\Subset \Gamma$ we have 
\[\Gamma' \Subset \Gamma \setminus \Cyl.\]

By the last proposition there is a constant $\rho_0 = \rho_0(n,k)$ such that if $\rho \leq \rho_0$, then the solution $M_t$ has the following property: If $\varphi:M\times[0,T) \to \mathbb{R}$ is nonnegative, locally Lipschitz and satisfies 
\[\supp(\varphi) \subset \supp(f_\eta) \cap \supp(G_\rho - \bar C),\]
then for almost every $t \in [0,T)$ there holds 
\begin{align*}
\int_M \varphi \partial_t f_\eta \, d\mu_t &\leq - \int_{M_t} \dot \gamma_\rho^{pq} \nabla_p f_\eta \nabla_q \varphi\,d\mu_t - \int_{M_t} \varphi \ddot \gamma^{pq,rs}_\rho \nabla_p A_{rs} \nabla_q f_\eta \, d\mu_t \\
& + C(\rho^{-1} + K) \int_{M_t} \varphi \frac{|\nabla f_\eta|^2}{f_\eta} \,d\mu_t - C^{-1} \rho \int_{M_t} \varphi f_\eta \frac{|\nabla A|^2}{H^2} \,d\mu_t \\
&  + CK\int_{M_t} |A|\varphi  \,d\mu_t,
\end{align*}
where $C = C(n,k)$.

We may therefore apply Theorem \ref{thm:Stamp} (see also Remark \ref{rem:weak_pinching}) to conclude that for $\rho \leq \rho_0$ the estimate
\[f_\eta \leq \eta  + C_\eta(n,k,\rho,M_0) G_\rho^{-1}\]
holds on $M\times[0,T)$ for every $\eta \in (0,1]$. Unpacking this we find that 
\[\lambda_1 \geq -\eta G_\rho-\eta h - C_\eta G_\rho^{-1} h,\]
so since $h \leq G_\rho + K$ we have
\[\lambda_1 \geq - 2\eta G_\rho - \eta K - C_\eta - K C_\eta G_\rho^{-1}.\] 
Recalling from Lemma \ref{lem:unif_parabolic} that $G_\rho$ is bounded from below by its minimum over $M_0$, we have 
\[\lambda_1 \geq  - 2 \eta G_\rho- \tilde C_\eta,\]
with $\tilde C_\eta = \tilde C_\eta(n,k,\rho,M_0)$. Since $\eta \in (0,1]$ was arbitrary, the convexity estimate is proven. 
\end{proof}

\section{Curved ambient spaces}
\label{sec:curved_ambient}

As discussed in the introduction, Andrews showed that in a compact ambient manifold with nonnegative sectional curvatures, the harmonic mean curvature flow contracts any compact, strictly convex initial hypersurface to a round point \cite{And94}. In fact, the result Andrews proved is more general: if the sectional curvatures of the ambient metric are bounded from below by $-\kappa^2$, then the evolution of any compact initial hypersurface satisfying $\lambda_1 > \kappa$ by the speed 
\[\lambda \mapsto \bigg( \sum_i \frac{1}{\lambda_i - \kappa}\bigg)^{-1}\]
contracts to a round point. Similarly, Brendle-Huisken \cite{Bren-Huisk17} used the shifted speed function 
\[\lambda \mapsto \bigg( \sum_{ i<j} \frac{1}{\lambda_i +\lambda_j - 2\kappa}\bigg)^{-1}\]
to extend their results on two-convex embeddings to background spaces with some negative curvature. Following these authors, we describe in this section an analogue of Theorem \ref{thm:conv_est_intro} for flows in non-Euclidean background spaces.

Fix $n \geq 4$, let $(N,\bar g)$ be a compact Riemannian manifold and denote the Riemann curvature tensor by $\bar R$. Fix also $3 \leq k \leq n-1$ and let $\kappa \geq 0$ be such that 
\begin{align}
\label{eq:backg_pinch}
 \sum_{i=2}^{k+1}\bar R(e_i, e_1, e_i, e_1) \geq -k\kappa^2
\end{align}
for every collection of orthonormal tangent vector fields $\{e_i\}_{i=1}^{k+1}$ on $N$. For each $\rho >0$ let $\gamma_\rho$ be defined on the $k$-positive cone $\Gamma\subset \mathbb{R}^{n+1}$ as before, and set
\[\gamma_{\rho,\kappa} (z) := \gamma_{\rho}(z_1 - \kappa, \dots, z_n - \kappa).\]
Then we have the following theorem. 

\begin{theorem}
\label{thm:short_time_N}
Fix a smooth immersion $F_0 : M \to N$ satisfying 
\[\lambda_1 + \dots + \lambda_k > k \kappa.\]
Then for each $\rho>0$, there is a unique maximal smooth solution $F:M\times[0,T) \to N$ of the evolution equation
\begin{equation}
\label{eq:CF_shifted}
\partial_t F(x,t) = - G_{\rho,\kappa}(x,t) \nu(x,t)
\end{equation}
such that $F(\cdot,0) = F_0$, where $G_{\rho,\kappa}(x,t) := \gamma_{k,\rho}(\lambda(x,t))$. There are constants $c_0$ and $\bar \alpha$ depending only on $n$, $k$, $\kappa$, $\rho$, $M_0$ and $N$ such that the inequalities
\begin{equation}
\label{eq:pinch_pres_N}
G_{\rho,\kappa} \geq c_0, \qquad H\leq \bar \alpha G_{\rho, \kappa}
\end{equation}
hold on $M\times[0,T)$, and at the maximal time $T < \infty$ there holds 
\[\limsup_{t \to T} \max_{M_t} G_{\rho,\kappa} = \infty.\]
\end{theorem}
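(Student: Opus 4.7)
The plan is to follow the strategy of Section \ref{sec:unif_parabolic}, with each step adapted to account for the ambient curvature of $N$ and the shift by $\kappa$. For short-time existence, note that $\gamma_{\rho,\kappa}$ is smooth, positive, symmetric, one-homogeneous, and concave on the shifted cone $\Gamma_\kappa := \{z \in \mathbb{R}^n : z - \kappa(1,\dots,1) \in \Gamma\}$, and the hypothesis $\lambda_1 + \dots + \lambda_k > k\kappa$ ensures $\lambda(F_0) \in \Gamma_\kappa$. Consequently \eqref{eq:CF_shifted} is a weakly parabolic fully nonlinear system at $F_0$, and the local theory cited in \cite{Lang14}[Section 3.5] (whose arguments go through in a curved background) yields a unique maximal smooth solution on an interval $[0,T)$.

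To prove the two pinching estimates in \eqref{eq:pinch_pres_N}, I would work with the shifted Weingarten tensor $\tilde A^i_j := A^i_j - \kappa \delta^i_j$, whose eigenvalues are $\tilde\lambda_i := \lambda_i - \kappa$, so that $G_{\rho,\kappa} = \gamma_\rho(\tilde\lambda)$. In a Riemannian background the evolution equation for $A$ acquires extra terms of the form $\dot\gamma_{\rho,\kappa}^{pq}(\bar R_{\nu p\nu q} A^r_i + \cdots)$ together with covariant derivatives of $\bar R$; the key point is that the pinching hypothesis \eqref{eq:backg_pinch} is tailored so that after contraction with $\dot\gamma_{\rho,\kappa}$, the reaction contributions from $\bar R$ respect the boundary of $\Gamma_\kappa$. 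To obtain the upper bound $H \leq \bar\alpha\, G_{\rho,\kappa}$ I would compute the evolution of $u := H/G_{\rho,\kappa}$ exactly as in Lemma \ref{lem:unif_parabolic}: the concavity of $\gamma_\rho$ makes the principal gradient term nonpositive, while the new ambient terms, being linear in $A$ with coefficients bounded by a constant depending only on $N$, can be dominated by $c\, G_{\rho,\kappa} \cdot u$ once $u$ is large, courtesy of \eqref{eq:backg_pinch} and the uniform shifted $k$-convexity along the flow. A scalar maximum principle applied on compact time intervals then yields the constant $\bar\alpha$ in terms of $n$, $k$, $\kappa$, $\rho$, $M_0$ and $N$.

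For the lower bound $G_{\rho,\kappa} \geq c_0$ and finite-time existence, I would adapt Lemma \ref{lem:speed_lower}. The evolution of $G_{\rho,\kappa}$ in the curved background takes the form
\begin{equation*}
(\partial_t - \dot\gamma_{\rho,\kappa}^{pq}\nabla_p\nabla_q)\, G_{\rho,\kappa} = \dot\gamma_{\rho,\kappa}^{pq}\tilde A^2_{pq}\, G_{\rho,\kappa} + \dot\gamma_{\rho,\kappa}^{pq}\bar R_{\nu p\nu q} \cdot G_{\rho,\kappa} + \cdots,
\end{equation*}
where the omitted terms involve $\bar R$ and $\bar\nabla\bar R$ but are bounded by constants depending on $N$. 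Using the previously established upper bound for $H/G_{\rho,\kappa}$ together with Cauchy--Schwarz as in Lemma \ref{lem:speed_lower}, the dominant reaction term is bounded below by $C^{-1} G_{\rho,\kappa}^3$, while the ambient curvature terms contribute a harmless $O(G_{\rho,\kappa})$ remainder. An ODE comparison then simultaneously produces the uniform lower bound on $G_{\rho,\kappa}$ (on any time interval on which it is controlled, using that it is positive at $t=0$) and, via the opposite sign of the inequality at large values, forces $\max_{M_t} G_{\rho,\kappa}$ to blow up in finite time.

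Finally, to conclude that $T$ is characterised by blow-up of $G_{\rho,\kappa}$, I would argue by contradiction in the standard way. If $G_{\rho,\kappa}$ remained bounded on $[0,T)$, the preserved pinching $H \leq \bar\alpha\, G_{\rho,\kappa}$ together with Lemma \ref{lem:improved_deriv_bds} ensures that $\dot\gamma_{\rho,\kappa}^{pq}\nabla_p\nabla_q$ stays uniformly elliptic with bounded coefficients. Writing the solution locally as a graph in normal coordinates on $N$ and applying the Evans--Krylov regularity theorem \cite{Ev82}, \cite{Kryl82} for concave fully nonlinear parabolic equations yields uniform $C^{2,\alpha}$ bounds, and a standard bootstrap gives $C^\infty$ estimates that allow the flow to be continued past $T$, contradicting maximality. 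The main obstacle throughout is bookkeeping: ensuring that the ambient curvature terms entering the evolution equations for $\tilde A$, $G_{\rho,\kappa}$ and $H/G_{\rho,\kappa}$ are absorbable by the principal reaction terms, and this is exactly where the choice of $\gamma_{\rho,\kappa}$ as a $\kappa$-shift of $\gamma_\rho$ together with the natural pinching \eqref{eq:backg_pinch} on $\bar R$ plays its essential role.
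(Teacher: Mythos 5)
Your outline follows the paper's own sketch fairly closely for the short-time existence, the scaling-invariant bound for $H/G_{\rho,\kappa}$ via the evolution of $u=H/G_{\rho,\kappa}$ and concavity, the lower bound for $G_{\rho,\kappa}$ via an exponential decay estimate, and the characterisation of $T$ by curvature blow-up using Evans--Krylov. However, there is a genuine gap in your argument for finite-time blow-up of $\max_{M_t}G_{\rho,\kappa}$. You treat the ambient-curvature contributions to the evolution of $G_{\rho,\kappa}$ as a ``harmless $O(G_{\rho,\kappa})$ remainder'' and then invoke an ODE comparison, which in your set-up reads schematically as $\frac{d}{dt}G_{\max}\geq c\,G_{\max}^3 - C\,G_{\max}$. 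That differential inequality does \emph{not} force blow-up: the stationary point at $G_{\max}=\sqrt{C/c}$ is a barrier, and any solution starting below it stays bounded for all time. So your argument only yields blow-up for initial data whose curvature is already large, which is not what the theorem asserts.

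The paper avoids this by using the pinching hypothesis \eqref{eq:backg_pinch} precisely here, not merely to preserve the cone. The evolution of the speed in a Riemannian background is
\[(\partial_t - \dot\gamma_{\rho,\kappa}^{pq}\nabla_p\nabla_q)G_{\rho,\kappa}=\dot\gamma_{\rho,\kappa}^{pq}A^2_{pq}\,G_{\rho,\kappa}+\dot\gamma_{\rho,\kappa}^{pq}\bar R(e_p,\nu,e_q,\nu)\,G_{\rho,\kappa},\]
and the assumption \eqref{eq:backg_pinch}, combined with the structure of the shifted speed $\gamma_{\rho,\kappa}$, makes the term $\dot\gamma_{\rho,\kappa}^{pq}\bar R(e_p,\nu,e_q,\nu)$ nonnegative. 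This removes the bad $-C\,G_{\max}$ from the ODE entirely, leaving $\frac{d}{dt}G_{\max}\geq c\,G_{\max}^3$, which blows up in finite time from any positive initial value. In fact the paper explicitly points this out at the end of Section \ref{sec:curved_ambient}: the \emph{only} place \eqref{eq:backg_pinch} enters the argument is to force finite-time blow-up. Your proposal identifies the pinching as ensuring that ``the reaction contributions from $\bar R$ respect the boundary of $\Gamma_\kappa$'' but never invokes it where it is actually needed; as written, that part of your argument does not close.
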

\begin{proof}[Sketch of proof:]
The proof closely follows \cite{And94} and \cite{Bren-Huisk17}. The crucial point is that the speed $G_{\rho, \kappa}$ satisfies a simple evolution equation:
\begin{align*}
(\partial_t - \dot \gamma_{\rho,\kappa}^{pq} \nabla_p \nabla_q) G_{\rho, \kappa} =  \dot \gamma_{\rho,\kappa}^{pq} A_{pq}^2 G_{\rho,\kappa} + \dot \gamma_{\rho,\kappa}^{pq} \bar R(e_p, \nu, e_q, \nu ) G_{\rho, \kappa}.
\end{align*}
Estimating crudely, one obtains by the maximum principle the bound
\[\min_{M_t} G_{\rho, \kappa} \geq \min_{M_0} G_{\rho, \kappa} \cdot e^{-Ct},\]
where $C$ depends on $N$ and an upper bound for the first derivatives of the speed. Hence the pinching condition 
\[\lambda_1 + \dots + \lambda_k > k \kappa\]
is preserved over finite time intervals. The pinching of the background curvature \eqref{eq:backg_pinch} ensures that the term 
\[\dot \gamma_{\rho,\kappa}^{pq} \bar R(e_p, \nu, e_q, \nu )\]
is nonnegative, so arguing as in Section \ref{sec:unif_parabolic} we conclude that $\max_{M_t} G_{\rho,\kappa}$ must become unbounded in finite time. The mean curvature of a solution satisfies 
\[(\partial_t - \dot \gamma_{\rho,\kappa}^{pq} \nabla_p \nabla_q) H \leq \dot \gamma_{\rho,\kappa}^{pq} A_{pq}^2 H + \ddot \gamma_{\rho,\kappa}^{pq,rs} \nabla^i A_{pq} \nabla_i A_{rs} + CH + C,\]
where $C$ depends only on an upper bound for $\dot \gamma_{\rho, \kappa}$ and $N$, and combining this with the evolution of $G_{\rho,\kappa}$ one obtains that $H/G_{\rho, \kappa}$ blows up at worst exponentially in time. This implies uniform parabolicity of the flow in appropriate coordinate systems. With these a priori estimates established, short-time existence and uniqueness are standard. The characterisation of singularity formation in terms of curvature blow-up follows from the regularity theory for fully nonlinear parabolic PDE (see for example \cite{Lang14}[Section 4.3]). 
\end{proof}

As in the Euclidean case, if $\rho$ is sufficiently small then solutions of \eqref{eq:CF_shifted} satisfy a convexity estimate:

\begin{theorem}
\label{thm:conv_est_N}
For each $n \geq 4$ and $3 \leq k \leq n-1$ there is a positive constant $\rho_0 = \rho_0(n,k)$ with the following property. Let $F : M\times [0,T) \to N$ be a compact solution of \eqref{eq:CF_shifted} with $\rho \leq \rho_0$. Then for every $\varepsilon >0$ there is a constant $C_\varepsilon = C_\varepsilon(n,k, \kappa, \rho, M_0, N)$ such that 
\[\lambda_1 \geq - \varepsilon G_{\rho,\kappa}- C_\varepsilon\]
on $M\times[0,T)$. 
\end{theorem}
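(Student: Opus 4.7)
The plan is to adapt the Euclidean proof step by step, with the speed $\gamma_\rho$ replaced by the shifted speed $\gamma_{\rho,\kappa}$ throughout, and to absorb the additional terms coming from the ambient curvature as lower-order contributions that fit into the $C_3 |A|^{2-\delta} + C_4$ slot in the hypotheses of Theorem \ref{thm:Stamp}. The background pinching assumption \eqref{eq:backg_pinch} is precisely what makes shifting work: by Theorem \ref{thm:short_time_N} the quantity $\lambda_1 + \dots + \lambda_k - k\kappa$ stays strictly positive along the flow, and $H/G_{\rho,\kappa}$ remains bounded by a constant $\bar\alpha$ depending only on $n,k,\kappa,\rho,M_0,N$. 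In particular, the shifted principal curvatures $\lambda - \kappa \mathbf{1}$ take values in a cone $\Gamma_{\bar\alpha} \Subset \Gamma$, so Lemma \ref{lem:improved_deriv_bds} and Lemma \ref{lem:good_grad} apply uniformly to $\gamma_{\rho,\kappa}$ evaluated along the flow.

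The first step is to derive the analogue of the cylindrical estimate Proposition \ref{prop:cyl_est} for $H/G_{\rho,\kappa}$ in the curved ambient setting. The evolution equations for $H$ and $G_{\rho,\kappa}$ pick up additional terms of the schematic form $\bar R \ast A$ and $\bar\nabla \bar R$, each bounded by $C(|A| + 1)$, so the quantity $f_\varepsilon := (H - (\alpha_\rho + \varepsilon) G_{\rho,\kappa})/G_{\rho,\kappa}$ satisfies a differential inequality of the same shape as in the Euclidean case with additional terms of the form $C(|A|^{-1} + |A|^{-2})$, which are of the form $C_3 |A|^{2-\delta} + C_4$ with $\delta = 2$. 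The condition $\Gamma'' \Subset \Gamma \setminus \Cyl$ on the support of $f_\varepsilon$ is checked exactly as before using the identity $\Cyl_m \subset \{\tr \leq \alpha_\rho \gamma_{\rho,\kappa}\}$ for $m \leq k-1$ after shifting. Theorem \ref{thm:Stamp} then yields $H \leq (\alpha_\rho + \varepsilon) G_{\rho,\kappa} + C_\varepsilon$, and hence (using the identity relating $\gamma_\rho$ to $\gamma_1$) also $H \leq (\alpha_1 + \varepsilon) G_{1,\kappa} + C_\varepsilon$.

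The second step is to adapt Sections \ref{sec:smallest_eig} and \ref{sec:pinching_quantity}. Andrews' barrier evolution inequality in Proposition \ref{prop:lambda_1_evol} holds on any Riemannian background with additional terms of the form $\dot\gamma_{\rho,\kappa}^{pq} \bar R_{p1q1} \cdot 1$ and $\bar\nabla \bar R$, both of which are $O(1)$; the favourable gradient term involving $(\lambda_i - \lambda_1)^{-1}$ is produced by exactly the same algebraic computation since it only depends on the concavity of the map $A \mapsto \lambda_1(A)$. The Codazzi identity now reads $\nabla_p A_{qr} - \nabla_r A_{pq} = \bar R(\nu, e_p, e_q, e_r)$, but the error term is $O(1)$ and enters only through Cauchy–Schwarz absorptions, again producing contributions of the form $C_3 |A|^{2-\delta} + C_4$. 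Consequently Lemma \ref{lem:lambda_1_grad_terms}, Proposition \ref{prop:f_evol_est}, and Lemma \ref{lem:f_stamp_1} all survive with additional terms of this shape. The pinching function $f_\eta := (-\lambda_1 - \eta G_{\rho,\kappa})/h$, defined using $h = G_{\rho,\kappa} - \mu H + K$ with $\mu$ and $K$ chosen from the shifted cylindrical estimate, is locally semiconvex by the same argument using Alexandrov's theorem and the characterisation $\lambda_1 = \min_e A(e,e)$.

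The final step is to verify the hypotheses of Theorem \ref{thm:Stamp} for $f_\eta$ on $\supp(G_{\rho,\kappa} - \bar C)$ and conclude, exactly as in the proof of Theorem \ref{thm:conv_est_intro}, that $f_\eta \leq \eta + C_\eta G_{\rho,\kappa}^{-1}$ for all sufficiently small $\rho \leq \rho_0(n,k)$. The value of $\rho_0$ is the same as in the Euclidean case because only the Euclidean part of the evolution inequality is sensitive to $\rho$; the ambient-curvature contributions enter through the lower-order terms $C_3, C_4$, which Theorem \ref{thm:Stamp} allows to depend freely on the geometry. Unpacking the estimate on $f_\eta$ and using the lower bound $G_{\rho,\kappa} \geq c_0$ from Theorem \ref{thm:short_time_N} gives the claimed pointwise bound $\lambda_1 \geq -\varepsilon G_{\rho,\kappa} - C_\varepsilon$. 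The main obstacle is bookkeeping: one must carefully track where ambient curvature terms enter (in the evolution of $A$, $G_{\rho,\kappa}$, $\lambda_1$, and in the Codazzi identity) and verify that in each place they appear as $O(|A|)$ or $O(1)$ quantities, so that they fit into the $C_3 |A|^{2-\delta} + C_4$ term of the general pinching estimate and do not disturb the delicate absorption arguments of Lemma \ref{lem:f_stamp_1}.
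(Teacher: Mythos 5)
Your proposal follows essentially the same route as the paper's own sketch: carry over the Euclidean argument step by step, observing that the ambient-curvature contributions to the evolution of $A$, $H$, $G_{\rho,\kappa}$ and $\lambda_1$, as well as the failure of Codazzi symmetry ($|\nabla_p A_{qr} - \nabla_r A_{pq}| \leq C$), are all $O(|A|)$ or $O(1)$ and hence slot into the $C_3|A|^{2-\delta} + C_4$ terms of Theorem \ref{thm:Stamp}, while the choice of $\rho_0$ depends only on $n$ and $k$ because only the Euclidean part of the evolution inequality is sensitive to $\rho$. One small misattribution worth noting: you assert that the background pinching condition \eqref{eq:backg_pinch} is ``precisely what makes shifting work,'' in the sense of preserving $\lambda_1 + \dots + \lambda_k - k\kappa > 0$. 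In fact, as the paper remarks at the end of Section \ref{sec:curved_ambient}, the pinching $\lambda_1 + \dots + \lambda_k > k\kappa$ is preserved over finite time intervals by the maximum principle regardless of \eqref{eq:backg_pinch}; the role of \eqref{eq:backg_pinch} is to guarantee finite-time blow-up, hence $T < \infty$, which in turn prevents the a priori bounds \eqref{eq:pinch_pres_N} from degrading — the convexity-estimate argument itself makes no further direct use of the ambient pinching. This does not affect the validity of your proof, but clarifying the logic would sharpen the exposition.
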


This is proven by essentially the same argument as in the Euclidean case. Notice that the convexity estimate is trivially true at low curvature scales by $k$-convexity, whereas at high curvature scales the speed $G_{\rho, \kappa}$ is equal to $G_\rho$ up to small error terms depending on $\kappa$. Moreover, with the a priori estimates from Theorem \ref{thm:short_time_N} in place, all of the extra terms that enter the computations as a result of the curvature of $N$ are of lower order. In particular, the evolution of the second fundamental form is the same is in a Euclidean background, up to lower-order error terms:
\begin{align*}
|(\partial_t - \dot \gamma_{\rho,\kappa}^{pq} \nabla_p \nabla_q) A^i_j - \dot \gamma_{\rho,\kappa}^{pq} A_{pq}^2  A_j^i - \ddot \gamma_{\rho,\kappa}^{pq,rs} \nabla^i A_{pq} \nabla_j A_{rs}| \leq C|A| + C,
\end{align*}
where $C$ depends only on the initial data and $N$. Recall that we also made heavy use of the Codazzi equations, in Lemma \ref{lem:good_grad} and Lemma \ref{lem:lambda_1_grad_terms}, for example. Although $\nabla A$ is no longer totally symmetric, we have 
\[|\nabla_p A_{qr} - \nabla_r A_{pq}| \leq C,\]
where $C$ depends only on $N$. 

Using these facts, we prove cylindrical estimates and build the pinching functions $f_\eta$ in the same way as before, and choose $\rho$ small depending on $n$ and $k$ to make sure that the structure of the gradient terms in the evolution of $f_\eta$ is favourable. Any extra lower-order terms are then absorbed in the Stampacchia iteration procedure. In fact, Theorem \ref{thm:Stamp} goes through exactly as before, with the constants in the pinching estimate picking up extra dependencies on $\kappa$ and $N$. 

We note that the proof of the convexity estimate does not make further use of the background pinching condition \eqref{eq:backg_pinch} - the role of this assumption is only to force finite-time blow-up of solutions, which as we saw is required to ensure uniform parabolicity.

{\appendix

\section{Semiconvex functions}
\label{app:semiconvex}

In this section $(M,g)$ is a Riemannian manifold with volume element $d\mu_g$. 

\begin{definition}
We say that a function $f:M\to \mathbb{R}$ is locally semiconvex (resp. semiconcave) if for every $x \in M$ there is a positive radius $r >0$ such that $f$ is the sum of a smooth and a convex (resp. concave) function on $B_g(x,r)$. 
\end{definition}

Alexandrov's theorem (see Section 6.4 of \cite{Ev-Gar}) implies that a convex function on Euclidean space is almost-everywhere twice differentiable. Composition with a diffeomorphism in the domain preserves local semiconvexity, so choosing coordinates and applying Alexandrov's theorem we obtain:

\begin{lemma}
Let $f : M\to \mathbb{R}$ be locally semiconvex. Then there is a set of full measure in $M$ where $f$ has two derivatives.  
\end{lemma}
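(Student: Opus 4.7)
The plan is to reduce to the classical Euclidean version of Alexandrov's theorem by working in coordinate charts. Since $(M,g)$ is paracompact, I would first cover $M$ by a countable collection $\{U_\alpha\}$ of open sets, each diffeomorphic to an open subset of $\mathbb{R}^n$ via a chart $\phi_\alpha : U_\alpha \to V_\alpha \subset \mathbb{R}^n$, and each small enough that the decomposition $f|_{U_\alpha} = g_\alpha + h_\alpha$ with $g_\alpha$ smooth and $h_\alpha$ convex (in an appropriate sense) is available.

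The key observation is that local semiconvexity is preserved under smooth diffeomorphism. Indeed, if $f$ is locally semiconvex at $x_0 \in U_\alpha$, then in a small enough neighbourhood $W$ of $\phi_\alpha(x_0)$ one can write $f \circ \phi_\alpha^{-1}(y) + C|y|^2$ as a convex function of $y \in W$, for an appropriately chosen constant $C = C(x_0, \phi_\alpha)$ which absorbs the second derivatives of the coordinate change. Thus $f \circ \phi_\alpha^{-1}$ is locally semiconvex on $V_\alpha$ in the Euclidean sense. Decomposing this function locally as smooth-plus-convex and invoking the Euclidean Alexandrov theorem on the convex summand (the smooth summand is trivially twice differentiable), one obtains a set $E_\alpha \subset V_\alpha$ of full Lebesgue measure at whose points $f \circ \phi_\alpha^{-1}$ has a second-order Taylor expansion.

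Pulling back, $\phi_\alpha^{-1}(E_\alpha) \subset U_\alpha$ is the set where $f$ has two derivatives (in the sense of a second-order Taylor expansion in the chart, which agrees with the notion of two derivatives on $M$ at a given point). Because $\phi_\alpha$ is a diffeomorphism, the Riemannian volume $\mu_g$ and the Lebesgue measure $\phi_{\alpha\ast}\mu_g$ are mutually absolutely continuous on $V_\alpha$ with smooth positive density, so $U_\alpha \setminus \phi_\alpha^{-1}(E_\alpha)$ has $\mu_g$-measure zero. Taking $Q := \bigcup_\alpha \phi_\alpha^{-1}(E_\alpha)$, the complement $M \setminus Q$ is a countable union of $\mu_g$-null sets and therefore $\mu_g$-null, so $Q$ has full measure in $M$.

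The main potential obstacle is verifying that the pointwise notion of ``twice differentiable'' in the Alexandrov sense is chart-independent, i.e.\ that having a second-order Taylor expansion in one chart at $x$ implies the same in every chart at $x$. This is a direct consequence of the chain rule applied to the $C^\infty$ coordinate change: the Taylor expansion transforms linearly under the Jacobian and picks up a quadratic correction from the Hessian of the change of variables, and this computation is routine and causes no loss of almost-everywhere differentiability.
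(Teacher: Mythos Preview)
Your proposal is correct and follows essentially the same approach as the paper, which simply notes that local semiconvexity is preserved under diffeomorphism and then invokes the Euclidean Alexandrov theorem in coordinate charts. Your write-up is a careful expansion of exactly this sketch, including the chart-independence check and the absolute continuity of $\mu_g$ with respect to Lebesgue measure; nothing more is needed.
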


The distributional Hessian of a convex function can be interpreted as a Radon measure on its domain \cite{Ev-Gar}[Section 6.3]. This property carries over to locally semiconvex functions, and implies the following result:

\begin{lemma}
\label{lem:semiconvex_dist}
Let $f:M \to \mathbb{R}$ be a locally semiconvex function and consider a smooth vectorfield $X$ defined on $M$. Then there is a Radon measure $\mu_{X}[f]$ on $M$ such that 
\[\int_{M} \varphi d \mu_{X}[f] = - \int_{M} \nabla_p Y^{pq} \nabla_q f \, d\mu_g\]
holds for every compactly supported Lipschitz function $\varphi : M \to \mathbb{R}$, where $Y$ is the tensor $Y := \varphi X \otimes X$. Moreover, the density of the absolutely continuous part of $\mu_{X}[f]$ with respect ot $\mu_g$ is $\nabla^2 f(X,X)$. 
\end{lemma}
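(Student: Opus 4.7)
The plan is to construct $\mu_X[f]$ by localising and invoking the classical theory of convex functions on Euclidean domains (Evans--Gariepy, Section~6.3), then gluing via a partition of unity. Choose a locally finite cover of $M$ by coordinate balls $\{U_\alpha\}$ on each of which, by local semiconvexity, one has a decomposition $f = s_\alpha + c_\alpha$, where $s_\alpha$ is smooth and $c_\alpha$ is convex in the Euclidean coordinates. For a convex function on a Euclidean domain, the distributional partial derivatives $\partial_i \partial_j c_\alpha$ form a symmetric matrix of signed Radon measures satisfying
\[\int_{U_\alpha} \psi \, d(\partial_i \partial_j c_\alpha) = - \int_{U_\alpha} \partial_j \psi \cdot \partial_i c_\alpha \, dx\]
for every compactly supported Lipschitz $\psi$. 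Contracting against the smooth functions $X^i X^j$ (together with smooth factors from the Christoffel symbols and the volume element $\sqrt{\det g}\,dx$) yields a signed Radon measure on each $U_\alpha$.

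First I would verify the integration-by-parts identity locally. Expanding
\[\nabla_p Y^{pq} = (\nabla_p \varphi) X^p X^q + \varphi \, \nabla_p(X^p X^q),\]
the contribution of the second term to the right-hand side of the claimed identity is automatically an integral of $\varphi$ against the locally bounded function $-\nabla_p(X^p X^q) \nabla_q f$, since $\nabla f \in L^\infty_\loc$ by semiconvexity, hence is trivially representable by an absolutely continuous Radon measure. For the first term, on each $U_\alpha$ write $\nabla_q f = \nabla_q s_\alpha + \nabla_q c_\alpha$; the smooth piece integrates classically and contributes $\nabla^2 s_\alpha(X,X) \, d\mu_g$, while the convex piece is paired with the distributional identity above to produce a signed Radon measure built from $X^i X^j \, d(\partial_i \partial_j c_\alpha)$ and smooth lower-order terms. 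Summing against a partition of unity subordinate to $\{U_\alpha\}$ yields the global measure $\mu_X[f]$. Independence from the chosen decomposition follows because any two such decompositions on an overlap differ by a smooth function, whose classical and distributional Hessians coincide.

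For the density of the absolutely continuous part, apply the Lebesgue decomposition $\mu_X[f] = \mu^{ac} + \mu^{sing}$ with respect to $d\mu_g$. By Alexandrov's theorem (applied chart-wise as in the preceding lemma) $f$ admits a classical pointwise Hessian $\nabla^2 f$ almost everywhere on $M$, and at any such point the standard theorem for convex functions identifies the a.c.\ density of $\partial_i \partial_j c_\alpha$ with the pointwise second partials. Contracting with $X^i X^j$ and converting to the intrinsic Hessian, one concludes $\mu^{ac} = \nabla^2 f(X,X) \, d\mu_g$. The main obstacle is the bookkeeping required to ensure that the coordinate-dependent quantities $X^i X^j \partial_i \partial_j c_\alpha$ glue to the intrinsic expression $\nabla^2 f(X,X) \, d\mu_g$ modulo terms that are manifestly absolutely continuous; this reduces to the observation that every correction term involves only first derivatives of $f$, which are locally bounded, and hence contributes an $L^\infty_\loc$ density rather than a singular measure.
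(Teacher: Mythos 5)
Your proposal is correct and takes essentially the same route as the paper: localize via a partition of unity in coordinate charts, invoke the Evans--Gariepy theory of distributional second derivatives of convex functions to produce the coordinate measures $\chi^i_{pq}$, contract with $X^pX^q\sqrt{\det g}$ and smooth lower-order factors, and identify the absolutely continuous density via Alexandrov's theorem. The only cosmetic difference is that you carry the decomposition $f = s_\alpha + c_\alpha$ through explicitly and expand $\nabla_p Y^{pq}$ before integrating by parts, whereas the paper applies the Evans--Gariepy identity to $f$ directly on each chart and reorganizes afterwards; the substance is identical.
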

\begin{proof}
By approximation it suffices to consider the case that $\varphi$ is smooth. Choose a partition of unity $\{\zeta_i\}_{i=1}^N$ covering $\supp(\varphi)$, and let $\zeta_i$ be compactly supported in $U_i$. We may assume that each of the sets $U_i$ is equipped with coordinates $\{x^p\}$, and that $f$ is the sum of a smooth and a convex function on $U_i$. Then by Theorem 6.8 in \cite{Ev-Gar}, for each index $i$, and each pair of coordinate indices $p$ and $q$, there is a Radon measure $\chi_{pq}^i$ on $U_i$ with the property that
\[\int_{U_i} \zeta d \chi_{pq}^i = - \int_{U_i} \frac{\partial \zeta}{\partial x^p} \frac{\partial f}{\partial x^q} \, dx\]
for every $\zeta \in C^\infty_0(U_i)$. Moreover, the density of the absolutely continuous part of $\chi_{pq}^i$ is given by $\frac{\partial^2 f}{\partial x^p \partial x^q}$. That is, if we write $\hat \chi_{pq}^i$ for the singular part of $\chi_{pq}^i$, then 
\[\int_{U_i} \zeta \frac{\partial^2 f}{\partial x^p \partial x^q} dx + \int_{U_i} \zeta d \hat \chi_{pq}^i = - \int_{U_i} \frac{\partial \zeta}{\partial x^p} \frac{\partial f}{\partial x^q} \, dx\]
for every $\zeta \in C^\infty_0(U_i)$.
Applying this formula with 
\[\zeta = \zeta_i \varphi X^p X^q \sqrt{\det g},\]
one finds after a computation that 
\begin{align*}
\int_{U_i} \varphi \nabla^2 f(X,X) &\, d\mu_g + \int_{U_i} \zeta_i \varphi X^p X^q \sqrt{\det g}\, d \hat \chi_{pq}^i \\
& = \int_{U_i} \varphi X^p X^q \nabla_p f \nabla_q \zeta_i \, d\mu_g - \int_{U_i} \zeta_i \nabla_p Y^{pq} \nabla_q f \,d\mu_g.
\end{align*}
Summing over $i$ yields 
\begin{align*}
\int_{M} \varphi \nabla^2 f(X,X) &\, d\mu_g + \sum_{i} \int_{U_i} \varphi X^p X^q \sqrt{\det g}\, d \hat \chi_{pq}^i \\
& =  - \int_{M_t} \nabla_p Y^{pq} \nabla_q f \,d\mu_g,
\end{align*}
so it suffices to take 
\[\mu_{X}[f](U) := \int_U \nabla^2 f(X,X) d\mu_g + \sum_i \int_{U\cap U_i} \zeta_i X^p X^q \sqrt{\det g}\,d \hat \chi_{pq}^i\]
for each set $U \subset M$ which is measurable with respect to $\mu_g$. 
\end{proof}

From this we derive the following inequality, which is made use of in Section \ref{sec:conv_est}.

\begin{lemma}
Let $(M,g)$ be a compact Riemannian manifold. Suppose $f : M \to \mathbb{R}$ is semiconvex and let $\varphi: M\to \mathbb{R}$ be Lipschitz continuous and nonnegative. Let $T$ be a smooth positive-definite $(2,0)$-tensorfield. Then there holds 
\[\int_M \varphi T^{pq} \nabla_p \nabla_q f \, d\mu_g \leq - \int_M T^{pq} \nabla_p \varphi \nabla_q f \, d\mu_g - \int_M \varphi \nabla_p T^{pq} \nabla_q f\,d\mu_g.\]
\end{lemma}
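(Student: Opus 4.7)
The plan is to reduce the desired inequality to the previous lemma applied to rank-one tensors of the form $X \otimes X$. The first step is to decompose $T$ into such rank-one summands with smooth coefficients. Viewing $T$ via the metric as a smooth symmetric positive-definite endomorphism of $TM$, one obtains a smooth positive-definite square root $S$ (smoothness follows from the implicit function theorem on the open cone of positive-definite symmetric endomorphisms). In a local orthonormal frame $\{e_i\}$, setting $Y_i := S(e_i)$ then gives the pointwise identity $T^{pq} = \sum_{i=1}^n Y_i^p Y_i^q$.

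Let $\{U_\alpha\}$ be a finite open cover of $M$ on which such local decompositions $T = \sum_i Y_{\alpha,i} \otimes Y_{\alpha,i}$ exist, and let $\{\zeta_\alpha\}$ be a subordinate partition of unity. I will choose auxiliary cutoffs $\eta_\alpha \in C^\infty_c(U_\alpha)$ with $\eta_\alpha \equiv 1$ on an open neighbourhood of $\supp(\zeta_\alpha)$, and set $\tilde Y_{\alpha,i} := \eta_\alpha Y_{\alpha,i}$, extended by zero to globally smooth vector fields on $M$. Applying the previous lemma to each $\tilde Y_{\alpha,i}$ with test function $\zeta_\alpha \varphi$ yields
\[
\int_M \zeta_\alpha \varphi \, d\mu_{\tilde Y_{\alpha,i}}[f] = -\int_M \nabla_p \bigl(\zeta_\alpha \varphi \tilde Y_{\alpha,i}^p \tilde Y_{\alpha,i}^q\bigr) \nabla_q f \, d\mu_g.
\]
The crucial structural observation is that, when $f$ is semiconvex, the singular part of the measure $\mu_X[f]$ is nonnegative for every smooth vector field $X$; this is implicit in the proof of the previous lemma, since the matrix-valued singular parts $\hat\chi_{pq}^i$ constructed there are positive semidefinite measures (by the classical Euclidean theory for convex functions), so that $X^p X^q \hat\chi_{pq}^i \geq 0$. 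As $\zeta_\alpha \varphi \geq 0$, this furnishes
\[
\int_M \zeta_\alpha \varphi \, \nabla^2 f(\tilde Y_{\alpha,i}, \tilde Y_{\alpha,i}) \, d\mu_g \leq \int_M \zeta_\alpha \varphi \, d\mu_{\tilde Y_{\alpha,i}}[f].
\]

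The final step is to sum these inequalities over $\alpha$ and $i$ and expand the derivative on the right-hand side. Since $\tilde Y_{\alpha,i} = Y_{\alpha,i}$ on $\supp(\zeta_\alpha)$ and $\sum_\alpha \zeta_\alpha = 1$, the left-hand sides assemble to $\int_M \varphi T^{pq} \nabla_p \nabla_q f \, d\mu_g$. On the right-hand side, the term containing $\nabla \zeta_\alpha$ sums to zero because $\sum_\alpha \nabla\zeta_\alpha = 0$ and $\sum_i \tilde Y_{\alpha,i}^p \tilde Y_{\alpha,i}^q = T^{pq}$ on $\supp(\nabla \zeta_\alpha)$ (where $\eta_\alpha \equiv 1$); the remaining terms collect to $-\int_M T^{pq} \nabla_p \varphi \nabla_q f \, d\mu_g - \int_M \varphi \nabla_p T^{pq} \nabla_q f \, d\mu_g$, using that $\nabla\eta_\alpha = 0$ on $\supp(\zeta_\alpha)$ to identify $\nabla_p \bigl(\sum_i \tilde Y_{\alpha,i}^p \tilde Y_{\alpha,i}^q\bigr) = \nabla_p T^{pq}$ there.

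The main technical point is the nonnegativity of the singular part of $\mu_X[f]$; since this property is not explicitly recorded in the statement of the previous lemma, I would include a short verification based on the construction of $\mu_X[f]$ from the matrix-valued distributional Hessians $\chi_{pq}^i$. Once that is in place, the remainder is routine linear-algebraic bookkeeping with the partition of unity and the smooth square root, and the smoothness of $S$ is standard.
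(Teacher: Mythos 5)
Your proof is correct and takes essentially the same route as the paper: both localize with a partition of unity, decompose $T$ into rank-one summands $Y_i \otimes Y_i$ on each chart, apply Lemma~\ref{lem:semiconvex_dist} with nonnegative test functions, invoke the nonnegativity of the singular part of $\mu_X[f]$ (which, as you rightly flag, needs a short verification and is used implicitly in the paper's step following ``Since $\varphi\zeta_i$ is nonnegative''), and sum with the $\nabla\zeta_\alpha$ contributions cancelling. The only cosmetic difference is in the local rank-one decomposition: you apply the smooth square root of $T$ to a $g$-orthonormal frame, whereas the paper Gram--Schmidt orthonormalizes a coframe with respect to $T$.
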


\begin{proof}
By approximation it suffices to consider the case that $\varphi$ is smooth. Choose a partition of unity $\{\zeta_i\}_{i=1}^N$ on $M$, and let $\zeta_i$ be compactly supported in $U_i$. We may assume that $f$ is the sum of a smooth and a convex function on $U_i$, and that there is a local frame $\{\tilde \omega^p\}$ for the cotangent bundle of $M$ on $U_i$. Since $T$ is positive-definite and symmetric, we may apply the Gram-Schmidt algorithm to produce from $\{\tilde \omega^p\}$ a local frame $\{ \omega^p\}$ such that 
\[T(\omega^p, \omega^q) = \delta^{pq}.\]
We now define a local frame $\{ e_p\}$ for the tangent bundle by the condition
\[ \omega^p(  e_q) = \delta^p_q.\]
With respect to this basis 
\[T = T(\omega^p,  \omega^q)  e_p \otimes  e_q = \sum_p  e_p \otimes  e_p,\]
so we can express
\[T^{pq} \nabla_p \nabla_q f = \sum_p \nabla^2 f(e_p ,  e_p).\]
Since $\varphi \zeta_i$ is nonnegative,
\begin{align*}
\int_{M} \varphi \zeta_i T^{pq} \nabla_p \nabla_q f\, d\mu_g &= \sum_p \int_M \varphi \zeta_i \nabla^2 f ( e_p,  e_p)\, d\mu_g\\
&\leq \sum_p \int_M \varphi \zeta_i  \, d\mu_{e_p}[f],
\end{align*}
hence we can use Lemma \ref{lem:semiconvex_dist} to estimate
\begin{align*}
\int_{M} \varphi \zeta_i T^{pq} \nabla_p \nabla_q f\, d\mu_g &\leq   -\int_M \nabla_p Y_i^{pq} \nabla_q f\, d\mu_g
\end{align*}
where
\[Y_i := \varphi \zeta_i \sum_p  e_p \otimes  e_p  = \varphi \zeta_i T.\]
Expanding $\nabla_p Y_i^{pq} = \nabla_p(\varphi \zeta_i) T^{pq} + \varphi \zeta_i \nabla_p T^{pq}$ and summing over $i$ we arrive at 
\[\int_M \varphi T^{pq} \nabla_p \nabla_q f\, d\mu_g \leq - \int_M T^{pq} \nabla_p \varphi \nabla_q f\,d\mu_g - \int_M \varphi \nabla_pT^{pq} \nabla_q f\, d\mu_g.\]
\end{proof}
}

\bibliographystyle{alpha}
\bibliography{references}

\end{document}